\numberwithin{equation}{section}
\newcommand{\numberset}{\mathbb}
\newcommand{\N}{\numberset{N}}
\newcommand{\R}{\numberset{R}}
\newtheorem{lemma}{Lemma}[section]
\newtheorem{theorem}{Theorem}
\newtheorem{prp}[lemma]{Proposition}
\newtheorem{cor}[lemma]{Corollary}
\newtheorem{defn}[lemma]{Definition}
\newtheorem{claim}[lemma]{Claim}
\theoremstyle{remark}
\newtheorem*{remark}{\textbf{Remark}}
\title{Sharp non-explicit blow-up profile for perturbed nonlinear heat equations with gradient terms}
\author{Maissâ Boughrara\vspace{1em} \\
Université Sorbonne Paris Nord, LAGA (UMR 7539),\\ F-93430 Villetaneuse, France.} 
\date{}
\begin{document}
\maketitle

\begin{abstract}
We consider a class of blow-up solutions for perturbed nonlinear heat equations involving gradient terms. We first prove the single point blow-up property for this equation and determine its final blow-up profile. We also give a sharper description of its blow-up behaviour, where we take as a profile some suitably chosen solution of the unperturbed semilinear heat equation. The proof relies on selfsimilar variables with involved arguments to control the gradient term.
\end{abstract}

\section{Introduction and statement of the results}
We are concerned with  blow-up phenomenon for the following perturbed nonlinear heat equation:
\begin{equation}\label{eq:1}
\begin{split}
u_t=\Delta u+|u|^{p-1}u+h(u,\nabla u),\\
u(.,0)=u_0\in W^{1,\infty}(\R^N),
\end{split}
\end{equation}
where $1<p$, if $N\geq 3$ then $(N-2)p<(N+2)$ , and 
\begin{equation}\label{condition h}
h\in \mathcal{C}^1(\R^{N+1}), |h(u,v)|\leq C(|u|^{\gamma}+|v|^{\overline{\gamma}}+1),
\end{equation}
with $0\leq \gamma<p, 0\leq \overline{\gamma}<\frac{2p}{p+1}$ and $C>0$.

The unperturbed case (i.e $h\equiv0$) has been extensively studied in the literature, and non bibliography can be exhaustive. We simply refer to the book by Quittner and Souplet in \cite{SQ2019} and the references therein. In this paper, our aim is to consider the perturbed case, and to address the question of refining the blow-up profile. Using standard contraction arguments, the Cauchy problem for equation \eqref{eq:1} is well posed in $W^{1,\infty}(\R^N)$, locally in time. Therefore, the maximal solution $u(t)$ exists for all $t \in [0, T )$, with $T \leq +\infty$.
Then, either it is a global solution, or it exists only up to some finite time $T<+\infty$ called the blow-up time, and in this case the solution blows up in the following sense:
$$\|u(t)\|_{W^{1,\infty}(\R^N)}\longrightarrow +\infty \text{ when } t\rightarrow T.$$
We say that $a$ is a blow-up point of $u$, if there exists a sequence $(x_n,t_n)\in \R^{N}\times [0,T)$ such that
$$(x_n,t_n)\underset{n\rightarrow +\infty}{\longrightarrow} (a,T)\text{ and }|(x_n,t_n)|\underset{n\rightarrow +\infty}{\longrightarrow} +\infty.$$

Some authors investigated the asymptotic behaviour of blow-up solutions. We may cite Souplet, Tayachi and Weissler in \cite{STW1996}, Alfonsi and Weissler in \cite{AW1992}. In particular, in \cite{EZ2011}, Ebde and Zaag constructed a solution of \eqref{eq:1} which blows up at some point $a$. Moreover, they showed that the constructed solution obeys the following blow-up profile:

\begin{equation}\label{behaviour1}
\left|\left|(T-t)^{\frac{1}{p-1}}u(.,t)-f\left(\frac{.-a}{\sqrt{(T-t)|\log(T-t)|}}\right)\right|\right|_{W^{1,\infty}(\R^N)}=O\left(\frac{1}{\sqrt{|\log(T-t)|}}\right),
\end{equation}
where
\begin{equation}\label{expression f}
f(z)=\left(p-1+\frac{(p-1)^2}{4p}|z|^2\right)^{-\frac{1}{p-1}}.
\end{equation}

However, two issues remain unsolved following that work:
\begin{itemize}
\item Is $a$ the only blow-up point?
\item Can we define a notion of final blow-up profile $u_*$, defined for all $x\in \R^N\backslash \{a\}$ such that
\begin{equation}\label{u_*}
u_*(x)=\underset{t\rightarrow T}{\lim}u(x,t)?
\end{equation}
Can we determine its equivalent near the blow-up point?
\end{itemize}

As a matter of fact, our first aim in this paper is to answer those open questions. For this matter we define the set $S_{a,T,h}$.
\begin{defn}
   For a given $(a,T)\in \R^N\times \R^+$ and $h$ verifies \eqref{condition h}, we define $S_{a,T,h}$ as the set of the solutions $u$ of \eqref{eq:1} with perturbation term $h$ and which blows up at time $T$ and at the point $a$, with $q(x,s)\in V_A(s)$ for all $s\geq s_0$ and for a certain $A>0$, where $q$ and $V_A(s)$ are defined later in \eqref{equality w and q} and in Propostion \ref{def V_A} respectively.
\end{defn}
In this definition, we don't require $a$ to be the only blow-up point. As a matter of fact, one of our main results in this paper is to prove that this is the case of all $u\in S_{a,T,h}.$
\begin{remark}
    The solution constructed in \cite{EZ2011} belongs to $S_{a,T,h}$.
\end{remark}
Our first result answers the questions mentionned earlier. This is the statement:
\begin{theorem}[\textbf{Single-point blow-up and final profile}]\label{Thm 2 g}
Assume $N\geq 1$ and let be $(a,T)\in \R^N\times \R^+$, $h$ verifies \eqref{condition h} and $u\in S_{a,T,h}$.
\begin{enumerate}[label=(\roman*)]
\item It holds that $"a"$ is the only blow-up point.
\item There exists a final profile $u_*$ as in \eqref{u_*}, for all $x\in \R^N\backslash \{a\}$. Moreover, there exists $\epsilon>0$, such that for all $\epsilon\geq |x-a|> 0$\label{Thm 2 ii}
\begin{equation}\label{Thm 2}
\left|u_*(x)-\left(\frac{8p|\log|x-a||}{(p-1)^2|x-a|^{2}}\right)^{\frac{1}{p-1}}\right|\leq C\frac {|\log|x-a||^{\frac{1}{p-1} - \frac{1}{4}}} {|x-a|^{\frac{2}{p-1}}}.
\end{equation}
\end{enumerate}
\end{theorem}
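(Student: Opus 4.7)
The plan is to work in self-similar variables centered at an arbitrary point $x_0\in\R^N$ close to $a$:
$$
w_{x_0}(y,s) = (T-t)^{1/(p-1)}\, u\bigl(x_0 + y\sqrt{T-t},\,t\bigr),\qquad s=-\log(T-t),
$$
so that $w_{x_0}$ satisfies an equation of the form
$$
\partial_s w = \Delta w - \tfrac12 y\cdot\nabla w - \tfrac{w}{p-1} + |w|^{p-1}w + H(w,\nabla w,s),
$$
where, thanks to the assumptions $\gamma<p$ and $\overline\gamma<2p/(p+1)$, the rescaled perturbation $H$ enjoys exponential decay in $s$. From \cite{EZ2011}, the choice $x_0=a$ yields $w_a(y,s)-f(y/\sqrt{s})\to 0$ in $W^{1,\infty}$ with rate $O(1/\sqrt{s})$. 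Both items of the theorem will be derived from this asymptotic combined with a careful treatment of the perturbation.

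For item (i), I would adapt the Giga--Kohn--Merle no-blow-up-outside-$a$ strategy. For $x_0$ close to $a$, I introduce an initial time
$$
s_0(x_0)=-2\log|x_0-a|-\log K_0,
$$
and compare $w_{x_0}(\cdot,s_0)$ with a dilated, translated version of $w_a(\cdot,s_0)\approx f(\cdot/\sqrt{s_0})$. Since $x_0$ sits at distance of order $K_0\sqrt{|\log(T-t)|}$ from the blow-up region in self-similar variables, $\|w_{x_0}(\cdot,s_0)\|_{L^\infty}$ is controlled by $f(K_0)$, which is strictly smaller than the threshold $f(0)=(p-1)^{-1/(p-1)}$. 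A supersolution argument for the non-autonomous equation above, exploiting the exponential smallness of $H$ for $s\geq s_0$, then shows that $\|w_{x_0}(\cdot,s)\|_{L^\infty}$ stays uniformly bounded. Translating back to the original variables yields that $u(x_0,t)$ is bounded as $t\to T$, so $x_0$ cannot be a blow-up point.

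For item (ii), once $u$ remains bounded on compact subsets of $(\R^N\setminus\{a\})\times[0,T)$, standard parabolic regularity yields the existence of the pointwise limit $u_*(x)=\lim_{t\to T}u(x,t)$ for $x\neq a$. To obtain the sharp equivalent, I would fix $x$ close to $a$ and perform a two-stage matching. First, on $[0,t_0(x)]$ with $t_0(x)$ defined by
$$
|x-a|^2 = K_0^2\,(T-t_0)\,|\log(T-t_0)|,
$$
I evaluate \eqref{behaviour1} at $y=(x-a)/\sqrt{T-t_0}$ to control $u(x,t_0(x))$ in terms of $f(K_0)$. Second, on $[t_0(x),T)$ the zeroth-order nonlinearity dominates, and a comparison argument shows that $u(x,t)$ shadows the ODE $v'=v^p$ up to lower-order corrections. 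Integrating this ODE from $t_0(x)$ to $T$ and substituting the formula for $t_0(x)$ produces, after a direct expansion, the leading term $(8p|\log|x-a||/((p-1)^2|x-a|^2))^{1/(p-1)}$; remarkably, the auxiliary constant $K_0$ drops out. The refined error bound in \eqref{Thm 2} will require a finer spectral decomposition of $w_a-f(y/\sqrt{s})$ near the matching region $|y|\sim K_0\sqrt{s}$, sharper than the global $W^{1,\infty}$ estimate of \eqref{behaviour1}.

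The main obstacle will be controlling the gradient perturbation $h(u,\nabla u)$ throughout both arguments. The criticality $\overline\gamma<2p/(p+1)$ is precisely tuned so that, under a Bernstein-type gradient estimate $|\nabla u|\lesssim u^{(p+1)/2}$ \emph{à la} Friedman--McLeod, one has $|\nabla u|^{\overline\gamma}\lesssim u^{\overline\gamma(p+1)/2}=o(u^p)$. Combined with $|u|^\gamma=o(u^p)$, this will show that $h$ is subordinate to the principal nonlinearity, both for the supersolution argument establishing item (i) and for the ODE shadowing near $T$ in item (ii). Establishing such gradient bounds uniformly near the singular point $a$, and quantifying the resulting corrections at the order demanded by the right-hand side of \eqref{Thm 2}, is expected to be the core technical effort of the proof.
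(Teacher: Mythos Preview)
Your high-level architecture matches the paper's: define a matching time $t(x)$ via $|x-a|\sim K_0\sqrt{(T-t(x))|\log(T-t(x))|}$, use the profile \eqref{behaviour1} up to $t(x)$, then show that for $t\in[t(x),T)$ the solution shadows the ODE $v'=v^p$ started from $f(K_0/2)$, and observe that $K_0$ cancels in the final formula. The paper does exactly this (Section~8, Step~1), so on that level you are on the right track.

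Where you diverge is in the gradient control, and this is more than a cosmetic difference. You propose a Bernstein--Friedman--McLeod estimate $|\nabla u|\lesssim u^{(p+1)/2}$. The paper does \emph{not} go this route; instead it proves a new ``no blow-up under threshold'' criterion (Proposition~\ref{prp bound v}) that treats $v$ and $\nabla v$ as a coupled system. Starting from the rough bounds $|v|\le\varepsilon_0(1-\tau)^{-1/(p-1)}$ and $|\nabla v|\le\varepsilon_0(1-\tau)^{-1/(p-1)-1/2}$ coming directly from \eqref{behaviour1}, it runs an iterative Duhamel argument with shrinking cut-offs (Lemmas~\ref{duhamel bound v & nabla v} and~\ref{lemma iterations}) to bootstrap both exponents down to zero. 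The reason for this machinery is that the Friedman--McLeod auxiliary function $J=|\nabla u|^2-\epsilon u^{p+1}$ does not obviously satisfy a clean differential inequality once the gradient-dependent term $h(u,\nabla u)$ is present; with $h$ merely $C^1$ and satisfying only the growth bound \eqref{condition h}, the extra terms in $\partial_t J-\Delta J$ are not evidently controllable. So your Bernstein step is a genuine gap unless you can supply that computation.

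A second, smaller point: you attribute the exponent $\tfrac14$ in \eqref{Thm 2} to ``a finer spectral decomposition of $w_a-f(y/\sqrt s)$.'' In the paper it has a different origin. After localizing $\eta=v-v_{K_0}$ by a cut-off at scale $|\log(T-t(x))|^\theta$, the Duhamel/Gronwall estimate produces two competing errors, $|\log(T-t(x))|^{-\theta}$ from the commutator with the cut-off and $|\log(T-t(x))|^{-(1/2-\theta)}$ from the initial-data mismatch; balancing gives $\theta=\tfrac14$. No spectral refinement beyond \eqref{behaviour1} is used for Theorem~\ref{Thm 2 g}.
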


Our next goal is to see whether we can improve the error estimate in \eqref{behaviour1}, or whether we can derive a Taylor expansion of the solution. As it was noted by Zaag in \cite{Z2002}, any attempt of doing such thing will lead to computations trapped in the scales of powers of $\frac{1}{|\log(T-t)|}$, which is a slow decaying variable in comparison to the natural decaying variable $T-t$. This was already noticed formally by Vel{\'a}zquez, Galaktionov, and Herrero in \cite{VGH1991}. In particular, having an expansion in the scale of powers of $T-t$ seemed out of reach, at least if one applies naive iterations to derive higher order terms starting from the first term of order $\frac{1}{|\log(T-t)|}$.


In some sense, the problem comes from the fact that we are linearising our solution around the profile $f$ shown in \eqref{behaviour1} and in \eqref{expression f} which is not sharp enough (The error is in the slow variable scale). The idea is to replace this profile by a sharper version showing a smaller error in the fast variable $T-t$. As already suggested in \cite{Z2002}, a good candidate for that sharper version may be simply the solution of the heat equation with a pure power source (i.e. $h\equiv 0$). However, there is one draw back in this idea: We are replacing an explicit profile by a non-explicit one.

We consider $\hat u$ solution of \eqref{eq:1} with $h\equiv 0$ which blows up at $T=1$ and at the origin. Many proved the existence of such solution, we may cite \cite{MZ1997}, but we will take $u\in S_{0,1,0}$. The following result is our improvement of \eqref{behaviour1}.


\begin{theorem}[\textbf{Sharp non-explicit blow-up profile}]\label{Thm: ch1}
Assume $N\geq 1$ and let be $(a,T)\in \R^N\times \R^+$, $h$ verifies \eqref{condition h} and $u\in S_{a,T,h}$. Then, for all $(x,t)\in \R^N\times [\max(0,T-1), T)$,
\begin{equation}\label{estimation dim N}
|u(x,t)-\hat u(x-a,t-T+1)|\leq C \max \left\{\frac{(T-t)^{-\frac{1}{p-1}}}{|\log(T-t)|}, \frac{|x-a|^{-\frac{2}{p-1}}}{|\log|x-a||^{\frac{1}{4}-\frac{1}{p-1}}} \right\}.
\end{equation}

Moreover, if N=1, then there exists $\lambda>0$ such that:

\begin{equation}\label{estimation dim 1}
\begin{split}
|u(x,t)-\lambda^{\frac{2}{p-1}}&\hat{u}(\lambda (x-a),\lambda^2(t-T)+1)|\\
\leq C \max&\left(\frac{(T-t)^{\frac{1}{2}-\frac{1}{p-1}}}{|\log(T-t)|^{\frac{3}{2}}}, \frac{(T-t)^{\nu-\frac{1}{p-1}}}{|\log(T-t)|^{2-\nu}}\exp\left(C\sqrt{-\log(T-t)}\right),\right.\\
&\ \left.\frac{|x|^{\min(1,2\nu)-\frac{2}{p-1}}}{|\log |x||^{\min(2,\nu)-\frac{1}{p-1}}}, \frac{|x|^{2\nu-\frac{2}{p-1}}}{|\log |x||^{\min(2,\nu)-\frac{1}{p-1}}}\exp\left(C\sqrt{-\log |x|}\right)\right),
\end{split}
\end{equation}
where $\nu=\frac{p-\gamma_0}{p-1}$ and $\gamma_0=\max\{\gamma,\overline{\gamma}(p+1)-p\}$.
\end{theorem}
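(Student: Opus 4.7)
The plan is to work in self-similar variables anchored at $(a,T)$ for $u$ and at $(0,1)$ for $\bar u$. Setting
\[
w(y,s) = (T-t)^{\frac{1}{p-1}}\, u\bigl(a + y\sqrt{T-t},\, t\bigr), \qquad s = -\log(T-t),
\]
and analogously $\bar w(y,s)$ for $\bar u$, both functions satisfy the rescaled PDE
\[
\partial_s w = \mathcal{L}\,w - \frac{w}{p-1} + |w|^{p-1}w + H(w, \nabla w, s),
\]
where $\mathcal{L} = \Delta - \tfrac{1}{2}\,y\cdot\nabla$, with $H\equiv 0$ for $\bar w$ while for $w$ the forcing inherits the growth condition \eqref{condition h}. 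The sub-critical exponents $\gamma < p$ and $\bar\gamma < 2p/(p+1)$ endow $H$ with an exponential decay in $s$ at the rate $\nu = (p-\gamma_0)/(p-1)$. Since both $w$ and $\bar w$ are Ebde--Zaag solutions sharing the asymptotic \eqref{behaviour1} with the same leading correction $f(y/\sqrt s)$, the difference $V := w - \bar w$ solves a linearised inhomogeneous equation
\[
\partial_s V = (\mathcal{L}+1)V + N(w,\bar w)\, V + H(w,\nabla w,s),
\]
where $N(w,\bar w) = p\int_0^1 |\theta w + (1-\theta)\bar w|^{p-1}\,d\theta - 1$.

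Next I would decompose $V$ along the eigenspaces of $\mathcal{L}+1$ in the weighted space $L^2_\rho(\R^N)$, $\rho(y) = (4\pi)^{-N/2}e^{-|y|^2/4}$, whose eigenvalues are $1 - |m|/2$ for $m\in\N^N$. The dangerous (non-negative) modes are $|m|\leq 2$: those with $|m|=0,1$ are absorbed by the translation freedom in the choice of blow-up point, while the $|m|=2$ mode corresponds to the common leading correction and hence cancels at leading order in $V$. The negative modes are controlled by the spectral gap, and a Duhamel argument applied to $V$ together with the exponential smallness of $H$ yields $\|V(\cdot,s)\|_{L^\infty(|y|\leq K\sqrt s)} \lesssim 1/s$. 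Translating back via $T-t = e^{-s}$ gives the inner factor $(T-t)^{-1/(p-1)}/|\log(T-t)|$ in \eqref{estimation dim N}; the outer factor, valid for $|x-a|$ of order one, follows by feeding Theorem \ref{Thm 2 g} into a comparison of the final profiles of $u$ and $\bar u$.

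For the sharper estimate \eqref{estimation dim 1} when $N=1$, I would exploit the scaling invariance of the unperturbed equation: $\tilde u_\lambda(x,t) := \lambda^{2/(p-1)}\bar u(\lambda x - a, \lambda^2(t+1)-T)$ still solves \eqref{eq:1} with $h\equiv 0$. Setting $V_\lambda := w - \tilde w_\lambda$ in self-similar variables and choosing $\lambda = \lambda(s)$ by a modulation argument that annihilates the projection of $V_\lambda$ onto the neutral Hermite direction $h_2(y)=y^2-2$ (which is the slowest-decaying mode in $N=1$), the residual $V_\lambda$ is governed by the next spectral gap plus the decaying forcing. This produces the polynomial rate involving $\nu$ together with the extra factor $\exp(C\sqrt{-\log(T-t)})$, which is the generic rate at which $\bar w$ approaches the stationary profile in one space dimension.

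The main obstacle will be the handling of the gradient-dependent forcing $H(w,\nabla w, s)$ inside the weighted $L^2_\rho$ framework: one must upgrade the pointwise control of $w$ into quantitative bounds on $\nabla w$, either by differentiating the self-similar equation or via parabolic regularisation, and then carefully trace how $\bar\gamma$ interacts with the spectral rates to recover the precise exponent $\nu$ and the logarithmic corrections appearing in \eqref{estimation dim 1}. A secondary difficulty, specific to $N=1$, is to prove that the modulation parameter $\lambda(s)$ converges to $1$ with a controlled rate so that the comparison between $u$ and the rescaled $\bar u$ remains effective; this requires a delicate ODE analysis with slowly varying logarithmic coefficients, combined with the sharp spectral information on $\mathcal{L}+1$ restricted to the subspace orthogonal to $h_2$.
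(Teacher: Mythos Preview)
Your self-similar setup and the spectral decomposition of $V=w-\bar w$ in $L^2_\rho$ match the paper's skeleton, and your identification of $H$ as an exponentially small forcing at rate $\nu$ is exactly right. However, two steps in your plan would not go through as written.

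\medskip
\textbf{The outer estimate.} Your proposal to obtain the factor $|x-a|^{-2/(p-1)}/|\log|x-a||^{1/4-1/(p-1)}$ by ``feeding Theorem~\ref{Thm 2 g} into a comparison of the final profiles'' is circular: in this paper Theorem~\ref{Thm 2 g} is proved \emph{after} Theorem~\ref{Thm: ch1}, using the very machinery (the no-blow-up criterion of Section~\ref{No blow up threshold}) that underlies the outer part of Theorem~\ref{Thm: ch1}. More importantly, Theorem~\ref{Thm 2 g} only concerns the limiting profile $u_*(x)$, whereas \eqref{estimation dim N} must hold for all $t<T$. The paper instead introduces, for each fixed $x$ with $|x|\geq K_0\sqrt{(T-t)|\log(T-t)|}$, the time $t(x)$ defined by $|x|=\tfrac{K_0}{2}\sqrt{(T-t(x))|\log(T-t(x))|}$, rescales both solutions around $(x,t(x))$ to functions $v_i(\xi,\tau)$ on $[0,1)$, proves a no-blow-up-under-threshold result (Proposition~\ref{prp bound v}) giving uniform bounds on $v_i$ and $\nabla v_i$, and then propagates the inner estimate at $\tau=0$ forward by a Duhamel/Gronwall argument with a cutoff at scale $|\log(T-t(x))|^{1/4}$. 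That last exponent $1/4$ is precisely what produces the $|\log|x-a||^{1/4-1/(p-1)}$ in \eqref{estimation dim N}; it does not come from the final profile.

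\medskip
\textbf{The $N=1$ refinement.} You propose a time-dependent modulation $\lambda=\lambda(s)$ annihilating the $h_2$ component, and then argue separately that $\lambda(s)$ converges. The paper does something cleaner and structurally different: it chooses a \emph{fixed} time-shift $\sigma_0$ in the similarity variable (equivalently a fixed $\lambda=e^{\sigma_0/2}$), and proves (Lemma~\ref{lemma contradiction}, via the Filippas--Kohn dynamics of $w_{2,2}$ in Lemma~\ref{lemma:FL1999}) that with this single choice the neutral component $\bar l_2(s)$ of $\bar g=w_1(\cdot,s)-w_2(\cdot,s+\sigma_0)$ is $o(s^{-2})$. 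Corollary~\ref{corollary: switch dom comp} then forces $\|\bar g\|_{L^2_\rho}$ into one of the two improved rates $e^{-s/2}/s^3$ or $e^{-\nu s+C\sqrt s}$; the $\exp(C\sqrt{s})$ factor arises from the $C/\sqrt{s}$ error in the ODE system \eqref{dyn I & l n>1}, not from ``the generic rate at which $\bar w$ approaches the stationary profile''. A varying $\lambda(s)$ would not directly yield the statement, which asserts the existence of a single $\lambda$.

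\medskip
A smaller point: the modes $|m|=0,1$ are not handled by ``translation freedom'' (the blow-up points are already matched). The paper shows $l_0,l_1=O(\tilde I/s)$ by the ODE dichotomy of Lemma~\ref{lemma:x,y}, exploiting that these components would otherwise grow like $e^{s}$ or $e^{s/2}$, contradicting \eqref{eq:rough}. Likewise the $|m|=2$ mode does not ``cancel'': generically it \emph{dominates} with $l_2\sim C_2/s^2$ (Proposition~\ref{prp:l_2,l_3}), and it is this $1/s^2$ in $L^2_\rho$ that, after the sub-solution argument of Section~\ref{L infinity estimate in blow-up region}, yields $|g|\leq C/s$ on $\{|y|\leq K_0\sqrt s\}$.
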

\begin{remark}
    Note from the line following \eqref{condition h} that $\nu>0$.
\end{remark}
\begin{remark}
In the one-dimensional case, we have this direct consequence which allows us to see better the estimation: 
\begin{equation*}
\begin{split}
|u(x,t)-\lambda^{\frac{2}{p-1}}&
\hat{u}(\lambda (x-a),\lambda^2(t-T)+1)|\\
\leq C \max&\left(\frac{(T-t)^{\frac{1}{2}-\frac{1}{p-1}}}{|\log(T-t)|^{\frac{3}{2}}}, \frac{(T-t)^{\nu-\frac{1}{p-1}-\varepsilon}}{|\log(T-t)|^{2-\nu}},\right.\\
&\ \left.\frac{|x-a|^{\min(1,2\nu)-\frac{2}{p-1}}}{|\log |x-a||^{\min(2,\nu)-\frac{1}{p-1}}}, \frac{|x-a|^{2\nu-\frac{2}{p-1}-\varepsilon}}{|\log |x-a||^{\min(2,\nu)-\frac{1}{p-1}}}\right),
\end{split}
\end{equation*}
for some $\varepsilon>0$. Although the estimation \eqref{estimation dim 1} is sharper than this last one.
\end{remark}

In fact, we will prove a more general result, which implies Theorem \ref{Thm: ch1}. Though, before stating the result, we need to introduce few definitions. Introducing the following similarity variables:

\begin{equation}\label{cv}
\begin{split}
y=\frac{x-a}{\sqrt{T-t}},\ s=-\log(T-t).\\
w_{a,T}(y,s)=(T-t)^{\frac{1}{p-1}}u(x,t).
\end{split}
\end{equation}
Then, \eqref{eq:1} is equivalent to the following equation:
\begin{equation}\label{eq1:cv}
w_s=\Delta w-\frac{1}{2}y.\nabla w-\frac{w}{p-1}+|w|^{p-1}w+e^{-\frac{p}{p-1} s}h(e^{\frac{1}{p-1} s}w,e^{\frac{p+1}{2(p-1)}s}\nabla w).
\end{equation}
We define $q$, for all $y\in \R^N$ and $s\geq s_0=-\log T$, by
\begin{equation}\label{equality w and q}
w(y,s)-f\left(\frac{y}{\sqrt{s}}\right)=\frac{N\kappa}{2ps}+q(y,s),
\end{equation}
where $\kappa=(p-1)^{-\frac{1}{p-1}}$.

\begin{defn}\label{Def S D}
For all $(a,T)\in \R^N\times \R$. We consider the following mappings:

\begin{equation*}
\begin{aligned}
\mathcal{T}_{\xi,\tau} & : & S_{a,T,h} & \rightarrow  S_{a+\xi,T+\tau,h}\\
& \quad & u & \mapsto  (\mathcal{T}_{\xi,\tau}u:(x,t)\mapsto u(x-\xi,t-\tau)).
\end{aligned}
\end{equation*}

\begin{equation*}
\begin{aligned}
\mathcal{D}_{\lambda} & : & S_{a,T,h} & \rightarrow  S_{\lambda^{-1}a,\lambda^{-2}T,h_\lambda}\\
& \quad & u & \mapsto  (\mathcal{D}_{\lambda}u:(x,t)\mapsto \lambda^{\frac{2}{p-1}} u(\lambda x,\lambda^2 t)),
\end{aligned}
\end{equation*}
where $h_\lambda(u,v)=\lambda^{\frac{2p}{p-1}}h(\lambda^{-\frac{2}{p-1}}u,\lambda^{-\frac{p+1}{p-1}}v)$.
\end{defn}

One may see that Theorem \ref{Thm: ch1} is an immediate consequence of the following Theorem:
\medskip

\noindent \textbf{Theorem \ref{Thm: ch1}' (Difference between two blow-up solutions)}
\textit{Assume $N\geq 1$ and consider $u_i\in S_{a_i,T_i,h_i}$, for $i=1,2$. Then, $\mathcal{T}_{a_1-a_2,T_1-T_2}u_2\in S_{a_1,T_1,h_2}$, and for all $(x,t)\in \R^N\times [\max(0,T_1-T_2), T_1)$}, 
\begin{equation}\label{thm estimation dim N}
|u_1(x,t)-\mathcal{T}_{a_1-a_2,T_1-T_2}u_2(x,t)| \text{ verifies the estimation \eqref{estimation dim N}},
\end{equation}
\textit{for $T=T_1$ and $a=a_1$. Moreover, if N=1, then there exists $\lambda>0$ such that $\mathcal{T}_{a_1,T_1}\mathcal{D}_\lambda \mathcal{T}_{-a_2,-T_2} u_2\in S_{a_1,T_1,h_{2,\lambda}}$ and }
\begin{equation}\label{thm estimation dim 1}
|u_1(x,t)-\mathcal{T}_{a_1,T_1}\mathcal{D}_\lambda \mathcal{T}_{-a_2,-T_2} u_2(x,t)| \text{ verifies the estimation \eqref{estimation dim 1}}.
\end{equation}
Let us mention that
\begin{equation}
    \mathcal{T}_{a_1,T_1}\mathcal{D}_\lambda \mathcal{T}_{-a_2,-T_2} u_2(x,t)=u_2(\lambda(x-a_1)+a_2,\lambda^2(t-T_1)+T_2).
\end{equation}

\begin{remark}
    Similar result has been proved previously for the case of $h_i\equiv 0, i=1,2$ by Kammerer and Zaag in \cite{KZ2000}. Our goal is to show that the last term of equation \eqref{eq:1} does not have a large effect on the solution's blow-up  for large time. In fact, our approach in this paper is inspired by the main idea of \cite{KZ2000} and some technical tools used in \cite{GK1989}, \cite{TZ2015} and in \cite{AZ2021}. It might seem easy to show such thing, since our problem is just a simple perturbation of the equation of \cite{KZ2000}. If this is true in the statements, it is not the case for the proof, where we need some involved arguments to control the $h_i(u_i, \nabla u_i)$ term.
\end{remark}

In this paper, we only prove Theorem \ref{Thm: ch1}', knowing that Theorem \ref{Thm: ch1} is an immediate consequence. We proceed in five sections for the proof. Note in particular, the conclusion is in the seventh section. Theorem \ref{Thm 2 g} is done in the eight section together with the section \ref{No blow up threshold}.

\textbf{Acknowledgement:} The author wants to thank Hatem Zaag for useful conversations and comments, who generously provided knowledge and expertise during this project.
Moreover, the author would like to express her gratitude to the referees for the effort they dedicated to reviewing the article and providing valuable suggestions which contributed to the improvement of this publication.

\section{Setting the problem}
Before heading to the proof, we need to set the frame of our work in order to give at least the strategy first.
\subsection{A shrinking set $V_A$ to zero}
We will give more details about the solution constructed in \cite{EZ2011} by defining the set $V_A$. Let $u$ be the solution which follows the profile \eqref{behaviour1}.
\subsubsection{Linearisation in the neighbourhood of the explicite profile}
One may see from \eqref{eq1:cv} that $q$ defined in \eqref{equality w and q} verifies the following equation:
\begin{equation*}
q_s=(\mathcal{L}+V)q+B(q)+R+N(q),
\end{equation*}
where for all $s\geq s_0$,

\begin{equation}\label{def op L}
\begin{split}
&\mathcal{L}=\Delta-\frac{y}{2}.\nabla+1,\\
&V=p\ \varphi^{p-1}-\frac{p}{p-1},\\
&B(q)=|\varphi+q|^{p-1}(\varphi+q)-\varphi^q-p\varphi^{p-1}q,\\
&R=\Delta \varphi -\frac{1}{2}y.\nabla \varphi-\frac{\varphi}{p-1}+\varphi^q-\partial_s\varphi,\\
&N(q)=e^{-\frac{p}{p-1} s}h(e^{\frac{1}{p-1} s}(\varphi+ q),e^{\frac{p+1}{2(p-1)}s}(\nabla\varphi+\nabla q)),
\end{split}
\end{equation}
with 
$$\varphi(y,s)=f\left(\frac{y}{\sqrt{s}}\right)+\frac{\kappa}{2ps},$$
where $f$ and $\kappa$ are given in \eqref{expression f} and \eqref{equality w and q} respectively.

The study of $q$ is equivalent to the study of the linear term $\mathcal{L}+V$, the quadratic term $B(q)$, the remain terms $R$ and $N(q)$. For more details, we will simply refer to the work of Ebde and Zaag in \cite{EZ2011}.
\subsubsection{Decomposition of the solution and shrinking set $V_A$}
We consider the following cut-off function $\chi_0\in C^{\infty}_0([0,+\infty))$ with $supp(\chi_0)\subset [0,2]$ and $\chi_0\equiv 1$ in $[0,1]$. Then, we define
$$\chi(y,s)=\chi_0\left(\frac{|y|}{K_0s^{1/2}}\right),$$ with $K_0$ large enough. We introduce
\begin{equation}\label{def q_e, q_b}
q_b=q\chi \text{ and } q_e=q(1-\chi),
\end{equation}
where $q$ is defined in \eqref{equality w and q}, which yields
$$q=q_b+q_e.$$
The differential operator $\mathcal{L}$ defined in \eqref{def op L} is a self-adjoint operator on its domain included in $L^2_\rho(\R^N)$, where $\rho(y)= \frac{e^{-\frac{|y|^2}{4}}}{(4\pi)^{\frac{N}{2}}}$. The spectrum of this operator consists of the following eigenvalues:
$$spec\ \mathcal{L}=\{1-\frac{n}{2}, n\in \N\}.$$
The corresponding eigenfunctions for $\beta=(\beta_1,...,\beta_N)\in \N^N$, with $\ |\beta|=n$, are
$$y=(y_1,...,y_N)\mapsto h_\beta=h_{\beta_1}(y_1)...h_{\beta_N}(y_N), \text{ for } |\beta|=\beta_1+...+\beta_N,$$
where $h_{\beta_i}$ are the (rescaled) Hermite polynomials given by
\begin{equation}\label{defhm}
h_{m}(\xi)=\underset{j=0}{\overset{[m/2]}{\sum}}\frac{m!}{j!(m-2j)!}(-1)^j\xi^{m-2j}, \text{ for } m\in \N \text{ and } \xi\in \R,
\end{equation}
where $[m/2]$ is the floor of $m/2$. Notice that these polynomials satisfy 
\begin{equation}\label{orthogonality property}
\int_{\R}h_{m}(\xi)h_{l}(\xi)\rho(\xi) d\xi=2^{m}m!\delta_{m,l}.
\end{equation}
Given $r\in L^2_\rho(\R^N)$, we define the component of $r$ on the eigenspace corresponding to the eigenvalue $\lambda=1-\frac{n}{2}$ by
\begin{equation}\label{def projection Pn}
P_n (r)(y)=\underset{|\beta|=n}{\sum} \pi_\beta (r)\ h_\beta(y),
\end{equation}
where 
\begin{equation}\label{def q_beta and k_beta}
\pi_\beta(r)=\int_{\R^N} k_\beta(y)r(y)\rho(y) dy \text{ and }  k_\beta(y)=h_\beta/\|h_\beta\|_{L^2_\rho}^2.
\end{equation}
Since the eigenfunctions of $\mathcal{L}$ span the whole space $L^2_\rho(\R^N)$, then we can write $q$ as
\begin{equation}\label{decomposition r}
q(y,s)=\underset{n=0}{\overset{2}{\sum}}P_n(q_b)(y,s)+q_{b,-}(y,s)+q_e=\underset{n=0}{\overset{2}{\sum}}\underset{|\beta|=n}{\sum}\pi_\beta(q_b)(s)h_\beta(y)+q_{b,-}(y,s)+q_e,
\end{equation}
where $q_{b,-}=P_-(q_b)=\underset{n\geq 3}{\sum}P_n(q_b)$. In the following proposition, we introduce the shrinking set $V_A$ which is crucial for the proof.

\begin{prp}[A set shrinking to zero]\label{def V_A}
For all $A\geq 1$ and $s\geq s_0$, we define $V_A(s)$ as the set of functions in $L^\infty(\R^N)$ such that, for all $r(s)\in V_A(s)$, we have, for all $y\in \R^\N$, the following:
\begin{equation}
\begin{split}
&|\pi_\beta(r_b)(s))|\leq As^{-2},\ |\beta|=0,1,\\
&|\pi_\beta(r_b)(s)|\leq A^2(\log s) s^{-2},\ |\beta|=2,\\
&|r_{b,-}(y,s)|\leq A(1+|y|^3)s^{-2},\\
&\|r_e(s)\|_{L^\infty(\R^N)}\leq A^2s^{-1/2},
\end{split}
\end{equation}
where $r_-$, $r_e$ and $r_n$ are defined as previously. Then, for all $s\geq s_0$ and $r(s)\in V_A(s),$, we have the following, for all $y\in \R^\N$,
\begin{enumerate}[label=(\roman*)]
\item for all $y\in \R^\N, \ |r_b(y,s)|\leq CA^2\frac{\log s}{s^2}(1+|y|^3)$,\label{i}
\item $\|r(s)\|_{L^\infty(\R^N)}\leq \frac{CA^2}{\sqrt{s}}$.\label{ii}
\end{enumerate}
\end{prp}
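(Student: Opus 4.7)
The plan is to derive both pointwise bounds from the componentwise constraints that define $V_A(s)$, together with the decomposition of $r_b$ into Hermite modes recalled in \eqref{decomposition r} and the fact that $r = r_b + r_e$, with $r_b$ supported in $\{|y| \leq 2K_0 \sqrt{s}\}$ by construction of the cut-off $\chi$.

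For (i), I would start from $r_b = r_0 h_0 + r_1(s) h_1(y) + r_2(s) h_2(y) + r_-(y,s)$ and use the elementary observation that the rescaled Hermite polynomials defined in \eqref{defhm} satisfy $|h_\beta(y)| \leq C(1+|y|^{|\beta|})$ for $|\beta| \leq 2$. The componentwise bounds in Proposition~\ref{def V_A} then yield $|r_0 h_0| \leq A s^{-2}$, $|r_1 h_1(y)| \leq CA s^{-2}(1+|y|)$, $|r_2 h_2(y)| \leq CA^2(\log s)s^{-2}(1+|y|^2)$, and $|r_-(y,s)| \leq A(1+|y|^3)s^{-2}$. Absorbing the lower powers of $|y|$ into $(1+|y|^3)$ and the numerical constants into the $\log s$ factor (valid for $s \geq s_0$ large enough), the triangle inequality produces
$$
|r_b(y,s)| \leq C A^2 \frac{\log s}{s^2}(1+|y|^3),
$$
which is the claim.

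For (ii), I would split $\|r(s)\|_{L^\infty} \leq \|r_b(s)\|_{L^\infty} + \|r_e(s)\|_{L^\infty}$. The bound $\|r_e(s)\|_{L^\infty} \leq A^2 s^{-1/2}$ is immediate from the definition of $V_A(s)$. For $r_b$, the key observation is that $\mathrm{supp}\,\chi \subset \{|y| \leq 2K_0\sqrt{s}\}$, so on the support of $r_b$ one has $|y|^k \leq C s^{k/2}$ for $k=1,2,3$. Plugging these polynomial growths into the four component estimates from the previous step gives respectively $A s^{-2}$, $CA s^{-3/2}$, $CA^2\frac{\log s}{s}$, and $CA s^{-1/2}$. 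Using $\log s \leq C \sqrt{s}$ for $s \geq s_0$ large enough, all four contributions are dominated by $\frac{CA^2}{\sqrt{s}}$, and summing yields (ii).

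The only real subtlety is the $r_2 h_2$ contribution, whose $V_A$-bound carries a $\log s$ factor and whose Hermite polynomial grows like $|y|^2$, so on the support of $\chi$ it contributes $\frac{\log s}{s}$. This is the worst term and is absorbed into $\frac{1}{\sqrt{s}}$ only because $\log s = o(\sqrt{s})$; going through $|y|^3 \leq C s^{3/2}$ in (i) would instead produce the slightly weaker $\frac{\log s}{\sqrt s}$, so the term-by-term treatment is essential. Apart from this point, the proof is a routine triangle inequality once the four component bounds are in place, and I do not expect further obstacles.
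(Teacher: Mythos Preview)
Your argument is correct and is exactly the standard elementary computation that the paper defers to \cite[Proposition 3.1]{EZ2011} rather than writing out. Both parts proceed precisely as you indicate: combine the Hermite decomposition \eqref{decomposition r} with the defining bounds of $V_A(s)$, use $|h_\beta(y)|\leq C(1+|y|^{|\beta|})$ for (i), and exploit the support restriction $|y|\leq 2K_0\sqrt{s}$ of $\chi$ for (ii); there is nothing to add beyond your observation that the $r_2 h_2$ contribution is the bottleneck term.
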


\begin{proof}
The proof follows directly from the definition of $V_A(s)$.
\end{proof}

It was proved in \cite{EZ2011}, the existence of such $q$ and $A>0$ such that $q(s)\in V_A(s)$ for all $s\geq s_0$. Moreover, it has been shown the stability of its initial data. Therefore, the set $S_{a,T,h}\neq \emptyset$. We are interested in blow-up solutions $u$ which blow up at $T$ at the blow-up point $a$ such that $q(s)\in V_A(s)$ for all $s\geq s_0=-\log T$ .


\subsection{Difference between two blow-up solutions}
Given $u_i\in S_{a_i,T_i,h_i}$ for some $T_i>0$ (we assume $T_2\geq T_1$ for simplicity), $a_i\in \R^N$ and $h_i$ verifies \eqref{condition h}, where $i=1,2$. Since from \eqref{behaviour1}, $w_1$ and $w_2$ have the same profile with error $\frac{1}{\sqrt{s}}$, then we have for free 

\begin{equation}\label{eq:rough}
\|g(s)\|_{W^{1,\infty}(\R^N)}=O\left(\frac{1}{\sqrt{s}}\right),
\end{equation}
where 
\begin{equation}\label{defw1w2g}
\begin{aligned}
&w_1(y,s)=(T_1-t)^{\frac{1}{p-1}}u_1(x,t),\\
&w_2(y,s)=(T_1-t)^{\frac{1}{p-1}}u_2(x-a_1+a_2,t-T_1+T_2),\\
&g(y,s)=w_1(y,s)-w_2(y,s).
\end{aligned}
\end{equation}
One may see, from \eqref{eq1:cv} and \eqref{defw1w2g}, that $g$ satisfies

\begin{equation}\label{eq3: g}
\partial_s g=\mathcal{L}g+\alpha_1 g+\alpha_2,
\end{equation}
where $\mathcal{L}$ is defined in \eqref{def op L} and
\begin{equation}\label{alpha1}
\alpha_1=\frac{|w_1|^{p-1}w_1-|w_2|^{p-1}w_2}{w_1-w_2}-\frac{p}{p-1},
\end{equation}

\begin{equation}\label{alpha2}
\alpha_2(y,s)=e^{-\frac{p}{p-1} s}(h_1(e^{\frac{1}{p-1} s}w_1,e^{\frac{p+1}{2(p-1)} s}\nabla w_1)-h_2(e^{\frac{1}{p-1} s}w_2,e^{\frac{p+1}{2(p-1)} s}\nabla w_1)),\ \forall s\geq s_0, y\in \R^N.
\end{equation}

\subsection{Strategy of the proof of Theorem \ref{Thm: ch1}'}
The following five sections are devoted to the proof of Theorem \ref{Thm: ch1}'. Let us briefly give the main ideas of how we proceed in each section.\\
\textbf{Section \ref{section l infinity estimates}:} We will first prove the boundedness of $w$ and $\nabla w$ in $L^\infty_{y,s}$. This will give us some parabolic regularities for $\alpha_1$ and $\alpha_2$.\\
\textbf{Section \ref{L 2 estimate}:} We prove an estimate of $g$ in $L^2_\rho$ by expanding $g$ on the eigenspaces of $\mathcal{L}$, which will yield to an estimate in $\{|y|\leq R\}$ for a $R>0$.\\
\textbf{Section \ref{L infinity estimate in blow-up region}:} We will extend  the previous estimate to the blow-up region $\{|y|\leq K_0\sqrt{s}\}$.\\
\textbf{Section \ref{No blow up threshold}:} We establish a new no blow-up under some threshold criterion for an equality system involving the solution and its gradient.\\
\textbf{Section \ref{Conclusion Thm 2}:} Near the blow-up time and point and in the set $\{|y|\geq K_0\sqrt{s}\}$, we will complete the estimate using techniques which are similar in the spirit of Giga and Kohn in \cite{GK1989} and Tayachi and Zaag in \cite{TZ2015}. Though, we will need genuine new parabolic regularity estimates to control the nonlinear gradient term using iterative arguments.

A drawing might help the reader to picture the strategy better, where we will also give further ideas about the behaviour of the solution in each region.

\begin{figure}[h!]
\center
\begin{tikzpicture}[scale=1.5]
    \fill [red!10](0,0) rectangle (2.4,2.54);
    \fill[red!30, scale=0.5, domain=0:4.7675, variable=\x] plot({\x},{-(\x/3)*2)*(\x/3)+5});
    \fill[red!30, scale=0.5] (0,0)--(4.7675,0)--(0,5) -- cycle;
    \fill[red!50, scale=0.5, domain=0:3.178, variable=\x] plot({\x},{-(\x/2)*2)*(\x/2)+5});
    \fill[red!50, scale=0.5] (0,0)--(3.178,0)--(0,5) -- cycle;
    \draw[thick,->] (-0.5,0) -- (3.5,0);
    \draw [right] (3.5,0) node{$|x|$};
    \draw[->] (0,-0.5) -- (0,3.5);
    \draw [above](0,3.5) node{$t$};
    \draw [left] (0,2.65) node{$T$};
    \draw (0,0) rectangle (2.4,2.525);
    \draw[scale=0.5, domain=0:4.768, variable=\x] plot({\x},{-(\x/3)*2)*(\x/3)+5});
    \draw[scale=0.5, domain=0:3.178, variable=\x] plot({\x},{-(\x/2)*2)*(\x/2)+5});
    \draw[<-] (2,1.5) -- (2.8,1.5);
    \draw [right] (2.8,1.5) node{\textbf{(3)} $|x|\leq K_0\sqrt{T|\log T|}$ and $t<T$};
    \draw[<-] (1.5,1) -- (2.8,1);
    \draw [right] (2.8,1) node{\textbf{(2)} $|x|\leq K_0\sqrt{(T-t)|\log(T-t)|}$};
    \draw[<-] (0.9,0.5) -- (2.8,0.5);
    \draw [right] (2.8,0.5) node{\textbf{(1)} $|x|\leq K_0\sqrt{(T-t)}$};
    
\end{tikzpicture}
\end{figure}
\textbf{Region (1):} We have $w\sim f(0)=\kappa$ given in \eqref{expression f} and \eqref{equality w and q}. Thus, $u\sim \kappa(T-t)^{-\frac{1}{p-1}}$ with $\kappa=(p-1)^{-\frac{1}{p-1}}$.

\textbf{Region (2):} We have $w\sim f(\frac{y}{s})$. Thus, $u\sim (T-t)^{-\frac{1}{p-1}}f\left(\frac{x}{\sqrt{(T-t)|\log(T-t)|}}\right)$.

\textbf{Region (3):} We have $u'\sim u^p$, (Small diffusion term). 

\section{$L^\infty_{y,s}$ estimates and parabolic regularity}\label{section l infinity estimates}

Let $u$ be a solution of \eqref{eq:1} such that $u\in S_{a,T,h}$. We will first prove the boundedness of $w$ and $\nabla w$, where $w$ is defined in \eqref{cv}.
\begin{lemma}[\textbf{$L^\infty_{y,s}$ bound on $w$}]\label{bound w}
There exists $M>0$ such that for all $y\in \R^N$ and $s\geq s_0$, $| w(y,s)|\leq M$.
\end{lemma}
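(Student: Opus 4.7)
The proof should be essentially immediate from the membership $u \in S_{a,T,h}$ together with Proposition \ref{def V_A}. My plan is as follows.

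First I would recall the decomposition \eqref{equality w and q}, which reads
\[
w(y,s) = f\!\left(\frac{y}{\sqrt{s}}\right) + \frac{N\kappa}{2ps} + q(y,s),
\]
valid for all $y \in \R^N$ and $s \geq s_0 = -\log T$, and estimate each of the three terms separately in $L^\infty$. The profile $f$ defined in \eqref{expression f} is a positive, radial, decreasing function of $|z|$ with $f(0) = \kappa = (p-1)^{-1/(p-1)}$, so that $\|f(\cdot/\sqrt{s})\|_{L^\infty} \leq \kappa$ uniformly in $s$. The corrector term $N\kappa/(2ps)$ is bounded by $N\kappa/(2ps_0)$ for $s \geq s_0$, so it contributes only a harmless constant (assuming, as is standard in this framework, that $s_0$ is taken sufficiently large for the construction of \cite{EZ2011} to apply).

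The nontrivial piece is $q$. However, since $u \in S_{a,T,h}$, Definition \ref{Def S D} gives $q(s) \in V_A(s)$ for all $s \geq s_0$, so Proposition \ref{def V_A}\ref{ii} yields
\[
\|q(s)\|_{L^\infty} \leq \frac{CA^2}{\sqrt{s}} \leq \frac{CA^2}{\sqrt{s_0}}
\]
for every $s \geq s_0$. Combining the three bounds, I obtain
\[
|w(y,s)| \leq \kappa + \frac{N\kappa}{2ps_0} + \frac{CA^2}{\sqrt{s_0}} =: M,
\]
which is the desired uniform bound, with $M$ depending only on $p$, $N$, $A$ and $s_0$.

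There is no real obstacle here; the lemma is a direct corollary of the shrinking-set estimate \ref{ii} of Proposition \ref{def V_A} and the elementary boundedness of the explicit profile $\varphi = f(y/\sqrt{s}) + \kappa/(2ps)$. The only point to be careful about is taking $s_0$ large enough so that the denominators $\sqrt{s_0}$ and $s_0$ are safely bounded away from zero, which is already implicit in the construction of solutions in $S_{a,T,h}$ used throughout the paper. Thus the proof will consist of essentially a one-line triangle inequality applied to \eqref{equality w and q}.
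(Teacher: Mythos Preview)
Your proposal is correct and follows essentially the same argument as the paper: apply the triangle inequality to the decomposition \eqref{equality w and q}, bound the profile $f$ by $\kappa$, the corrector by a constant, and $q$ via Proposition~\ref{def V_A}\ref{ii}. No additional idea is needed.
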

\begin{proof}
Using \eqref{equality w and q}, we have for all $y\in \R^N$ and $s\geq s_0$,
$$|w(y,s)|\leq f\left(\frac{y}{\sqrt{s}}\right)+\frac{\kappa}{2ps}+|q(y,s)|.$$
Since $q_i(s)\in V_A(s)$ and $\kappa=\max f$, then from Proposition \ref{def V_A} $(ii)$, we have
$$|w(y,s)|\leq \kappa+\frac{\kappa}{2ps}+C\frac{A^2}{\sqrt{s}},$$
which concludes the proof.
\end{proof}

\begin{prp}[\textbf{Parabolic regularity}]\label{bound nabla w}
There exists $s_1\geq s_0$ such that for all $s\geq s_1$, we have $\|\nabla w(s)\|_{L^\infty(\R^N)}\leq M'$.
\end{prp}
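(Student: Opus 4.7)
The plan is to use a Duhamel representation of $w$ in self-similar variables together with the $L^\infty\to L^\infty$ gradient smoothing of the Ornstein–Uhlenbeck semigroup $e^{\tau\mathcal{L}_0}$, where $\mathcal{L}_0:=\Delta-\tfrac{y}{2}\cdot\nabla$. First I would recast \eqref{eq1:cv} as $w_s=\mathcal{L}_0 w+G(w,\nabla w,s)$ with
\begin{equation*}
G(w,\nabla w,s):=-\tfrac{w}{p-1}+|w|^{p-1}w+e^{-\frac{p}{p-1}s}h\!\left(e^{\frac{1}{p-1}s}w,\,e^{\frac{p+1}{2(p-1)}s}\nabla w\right),
\end{equation*}
and, for any $s\geq s_0+1$, apply Duhamel on the window $[s-1,s]$. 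The standard Mehler-based estimate $\|\nabla e^{\tau\mathcal{L}_0}v\|_{L^\infty}\leq C(1-e^{-\tau})^{-1/2}\|v\|_{L^\infty}$ then yields
\begin{equation*}
\|\nabla w(s)\|_{L^\infty}\leq C\|w(s-1)\|_{L^\infty}+C\int_{s-1}^{s}\frac{\|G(\sigma)\|_{L^\infty}}{\sqrt{1-e^{-(s-\sigma)}}}\,d\sigma,
\end{equation*}
where the singular factor in $\sigma$ is integrable near $\sigma=s$.

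Next I would control $\|G(\sigma)\|_{L^\infty}$. By Lemma \ref{bound w}, $|w|\leq M$, so the contributions of $-w/(p-1)$ and $|w|^{p-1}w$ are bounded by a constant. For the perturbation, \eqref{condition h} combined with $|w|\leq M$ gives
\begin{equation*}
e^{-\frac{p}{p-1}\sigma}\!\left|h\!\left(e^{\frac{1}{p-1}\sigma}w,\,e^{\frac{p+1}{2(p-1)}\sigma}\nabla w\right)\right|\leq C e^{-\frac{p-\gamma}{p-1}\sigma}+C\,e^{-\mu\sigma}\|\nabla w(\sigma)\|_{L^\infty}^{\bar\gamma}+Ce^{-\frac{p}{p-1}\sigma},
\end{equation*}
with $\mu:=\tfrac{2p-\bar\gamma(p+1)}{2(p-1)}>0$. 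The key point is that both exponents $\tfrac{p-\gamma}{p-1}$ and $\mu$ are strictly positive, precisely because of the assumptions $\gamma<p$ and $\bar\gamma<\tfrac{2p}{p+1}$. Setting $N(s):=\sup_{\sigma\in[s-1,s]}\|\nabla w(\sigma)\|_{L^\infty}$, which is finite for each fixed $s$ by local well-posedness of \eqref{eq:1} in $W^{1,\infty}(\R^N)$, the two estimates combine into a closed scalar inequality
\begin{equation*}
N(s)\leq K_1+K_2\,e^{-\mu(s-1)}\,N(s)^{\bar\gamma},\qquad s\geq s_0+1.
\end{equation*}

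The main obstacle is that $\bar\gamma$ may lie in $[1,2)$ (since $\bar\gamma<\tfrac{2p}{p+1}<2$ is all that is known), so the right-hand side is superlinear in $N(s)$ and the inequality does not by itself produce a uniform bound. To close the estimate I would run a continuity/bootstrap argument: I would pick $s_1$ so large that $K_2\,e^{-\mu(s_1-1)}(2K_1)^{\bar\gamma}<K_1$. For such $s$, the inequality forces $N(s)$ to lie in one of the two components of $\{x\geq 0:x-K_2 e^{-\mu(s-1)}x^{\bar\gamma}\leq K_1\}$, namely a lower component with $N(s)$ close to $K_1$ or an upper component tending to $+\infty$. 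Since $s\mapsto N(s)$ is continuous and finite on $[s_0,\infty)$ and the lower component is non-shrinking while the upper one recedes to infinity as $s\to\infty$, $N$ must eventually be trapped in the lower component; this yields $\|\nabla w(s)\|_{L^\infty}\leq N(s)\leq 2K_1=:M'$ for all $s\geq s_1$, which is the claim.
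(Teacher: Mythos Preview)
Your Duhamel setup and the bound on the perturbation are correct and essentially match the paper's. The gap is in the last paragraph, the bootstrap step. Your closed inequality $N(s)\le K_1+K_2e^{-\mu(s-1)}N(s)^{\bar\gamma}$ with $\bar\gamma\in[1,2)$ indeed confines $N(s)$ to a set of the form $[0,a_s]\cup[b_s,\infty)$ with $a_s\to K_1$ and $b_s\to\infty$, but the assertion that ``$N$ must eventually be trapped in the lower component'' is unjustified. Continuity and pointwise finiteness of $N$ do not rule out the scenario $N(s)\ge b_s$ for all large $s$: $N(s)$ is allowed to grow along with $b_s$. You have no a~priori growth bound on $\|\nabla w(s)\|_{L^\infty}$ (membership in $V_A$ gives only $L^\infty$ control on $w$, not on $\nabla w$), and the value $\|\nabla w(s_0)\|_{L^\infty}$, while finite, may already exceed the threshold $b_{s_0+1}$ since $K_2e^{-\mu s_0}$ need not be small. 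So there is no time at which you know $N$ lies in the lower branch, and the bootstrap cannot start.

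The paper closes this gap by bootstrapping a different, \emph{singular} bound on each window $(\bar s,s]=(s-\delta_0,s]$: it shows $\|\nabla w(s')\|_{L^\infty}\le \bar M/\sqrt{s'-\bar s}$ for all $s'\in(\bar s,s]$. The point is that this bound is $+\infty$ at the left endpoint, hence trivially satisfied there; a first-crossing argument then shows it is never violated. At $s'=s$ one reads off $\|\nabla w(s)\|_{L^\infty}\le \bar M/\sqrt{\delta_0}=:M'$. Technically this requires controlling $\int_{\bar s}^{s^*}(s^*-t)^{-1/2}(t-\bar s)^{-\bar\gamma/2}\,dt$ by $C(\delta_0)(s^*-\bar s)^{-1/2}$, which works precisely because $\bar\gamma<2$; the smallness that beats the constant comes from the prefactor $e^{-2\mu\bar s}$ with $\bar s=s-\delta_0$ large. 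If you replace your uniform target $2K_1$ by this singular bound, your argument goes through.
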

The proof is a direct consequence of the following Lemma:

\begin{lemma}
There exist $s_1\geq s_0$ and $\delta_0>0$ such that for all $s\geq s_1$ and for all $s'\in[\overline{s},s]$ with $\overline{s}=s-\delta_0$ , we have $\|\nabla w(s')\|_{L^\infty(\R^N)}\leq \frac{\overline{M}}{\sqrt{s'-\overline{s}}}$ where $\overline{M}=2C(M+2(1+M^p)\sqrt{\delta_0}(1+C_1(\bar s))$, $C_1(s)=e^{-\frac{p-\gamma}{p-1}s}$ and $M$ is given in Lemma \ref{bound w}.
\end{lemma}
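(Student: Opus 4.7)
The plan is to apply the variation-of-constants (Duhamel) formula to the similarity equation \eqref{eq1:cv} and to exploit the gradient-smoothing effect of the associated Ornstein--Uhlenbeck semigroup. Writing $\mathcal{L}_0:=\Delta-\tfrac{1}{2}y\cdot\nabla$, the semigroup $e^{\tau\mathcal{L}_0}$ has a Mehler-type Gaussian kernel, and a direct computation on its $y$-derivative yields the standard smoothing estimate
$$||\nabla e^{\tau\mathcal{L}_0}\phi||_{L^\infty}\leq\frac{C}{\sqrt{1-e^{-\tau}}}||\phi||_{L^\infty}\leq\frac{C}{\sqrt{\tau}}||\phi||_{L^\infty},\qquad \tau\in(0,1].$$
Applied on $[\overline{s},s']$ with $s'-\overline{s}\leq\delta_0\leq 1$, Duhamel gives
$$w(s')=e^{(s'-\overline{s})\mathcal{L}_0}w(\overline{s})+\int_{\overline{s}}^{s'}e^{(s'-\sigma)\mathcal{L}_0}\!\left[-\tfrac{w(\sigma)}{p-1}+|w(\sigma)|^{p-1}w(\sigma)+H(\sigma)\right]d\sigma,$$
where $H(y,\sigma):=e^{-\frac{p}{p-1}\sigma}h(e^{\frac{1}{p-1}\sigma}w,\,e^{\frac{p+1}{2(p-1)}\sigma}\nabla w)$ is the perturbation from \eqref{eq1:cv}. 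Differentiating and taking $L^\infty$ norms, Lemma \ref{bound w} controls the non-gradient contributions by $C(1+M^p)$, while the $|u|^\gamma$ and $|v|^{\overline{\gamma}}$ parts of the growth condition \eqref{condition h} respectively produce $CM^\gamma C_1(\sigma)\leq C(1+M^p)C_1(\sigma)$ and $Ce^{-\lambda\sigma}||\nabla w(\sigma)||_{L^\infty}^{\overline{\gamma}}$ with $\lambda:=\frac{2p-(p+1)\overline{\gamma}}{2(p-1)}>0$, thanks to $\overline{\gamma}<\frac{2p}{p+1}$.

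To close the inequality (the perturbation still sees $||\nabla w||_{L^\infty}$ on the right), I would set up a bootstrap/continuity argument on
$$\Phi(s'):=\sup_{\overline{s}\leq\sigma\leq s'}\sqrt{\sigma-\overline{s}}\,||\nabla w(\sigma)||_{L^\infty}.$$
Local well-posedness of \eqref{eq:1} in $W^{1,\infty}(\R^N)$ ensures that $\Phi$ is finite and continuous on $[\overline{s},s]$, with $\Phi(\overline{s})=0$. Assume for contradiction that $\Phi$ hits the threshold $\overline{M}$ at a first time $s_*\in(\overline{s},s]$. Inserting the bootstrap bound $||\nabla w(\sigma)||_{L^\infty}\leq\overline{M}/\sqrt{\sigma-\overline{s}}$ inside the gradient term of $H$ and using the convergent Beta-type identity
$$\int_{\overline{s}}^{s_*}\frac{d\sigma}{\sqrt{s_*-\sigma}\,(\sigma-\overline{s})^{\overline{\gamma}/2}}=C_{\overline{\gamma}}\,(s_*-\overline{s})^{(1-\overline{\gamma})/2}\qquad(\overline{\gamma}<2),$$
the Duhamel bound yields
$$\sqrt{s_*-\overline{s}}\,||\nabla w(s_*)||_{L^\infty}\leq CM+2C(1+M^p)\sqrt{\delta_0}\bigl(1+C_1(\overline{s})\bigr)+C\,\overline{M}^{\,\overline{\gamma}}\,\delta_0^{1-\overline{\gamma}/2}e^{-\lambda\overline{s}}.$$
Picking $s_1$ large and then $\delta_0$ small makes the last term strictly less than $\overline{M}/4$, so the right-hand side falls below $\overline{M}$, contradicting $\Phi(s_*)=\overline{M}$; hence $\Phi(s')<\overline{M}$ throughout $[\overline{s},s]$, which is the claim.

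The heart of the difficulty is the self-coupling produced by the gradient dependence of the perturbation $H$: the quantity one wishes to bound reappears, raised to the power $\overline{\gamma}$, on the right-hand side of the Duhamel estimate. The subcriticality condition $\overline{\gamma}<\frac{2p}{p+1}$ plays a double role: it delivers the exponentially small prefactor $e^{-\lambda\sigma}$ that can be absorbed by taking $\overline{s}$ large, and it keeps $\overline{\gamma}<2$, which is precisely what is needed for the singular convolution $\int(s_*-\sigma)^{-1/2}(\sigma-\overline{s})^{-\overline{\gamma}/2}d\sigma$ to be integrable and controlled by a positive power of $\delta_0$. These two features together close the bootstrap and dictate the form of $\overline{M}$ announced in the statement.
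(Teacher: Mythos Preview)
Your proposal is correct and follows essentially the same route as the paper: Duhamel on $[\overline{s},s']$ for the similarity equation, the Ornstein--Uhlenbeck gradient smoothing $||\nabla e^{\tau\mathcal L}\phi||_{L^\infty}\le C(1-e^{-\tau})^{-1/2}||\phi||_{L^\infty}$, and a continuation/contradiction argument on the first time the bound $\overline M/\sqrt{s'-\overline{s}}$ is violated. The only cosmetic differences are that the paper puts the linear term $-w/(p-1)$ into the generator (working with $\mathcal L_p=\mathcal L-\tfrac{p}{p-1}$) rather than into the source, and it handles the singular convolution $\int_{\overline{s}}^{s^*}(s^*-t)^{-1/2}(t-\overline{s})^{-\overline{\gamma}/2}\,dt$ by splitting at the midpoint instead of invoking the Beta identity; also, the paper closes the bootstrap by taking $s_1$ large alone (which already kills the $e^{-\lambda\overline{s}}$ prefactor), without needing to shrink $\delta_0$.
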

\begin{proof}

We assume $s\geq s_1$ with $s_1$ large enough and going to be determined afterwards. The solution of \eqref{eq1:cv} is given by
\begin{equation*}
w(s')=e^{(s'-s_0)\mathcal{L}_p}w(\overline{s})+\int^{s'}_{\overline{s}}e^{(s'-t)\mathcal{L}_p}F(t)dt,
\end{equation*}
where
\begin{equation*}
\begin{split}
&\mathcal{L}_p=\mathcal{L}-\frac{p}{p-1}=\Delta-\frac{y}{2}.\nabla-\frac{1}{p-1},\\
&F(t)=|w|^{p-1}(t)w(t)+e^{-\frac{p}{p-1} t}h(e^{\frac{1}{p-1} t}w(t),e^{\frac{p+1}{2(p-1)}t}\nabla w(t)).
\end{split}
\end{equation*}
Then, 
\begin{equation}\label{w bound}
\|\nabla w(s')\|_{L^\infty(\R^N)}\leq\|\nabla e^{(s'-\overline{s})\mathcal{L}_p}w(\overline{s})\|_{L^\infty(\R^N)}+\int^{s'}_{\overline{s}}\|\nabla e^{(s'-t)\mathcal{L}_p}F(t)\|_{L^\infty(\R^N)}dt.
\end{equation}
To simplify the notations, we will define, for all $s\geq s_0$,
\begin{equation}
C_0(s)=e^{-\frac{p+1}{2(p-1)}s}, C_1(s)=e^{-\frac{p-\gamma}{p-1}s}, C_2(s)=e^{-\frac{2p-\overline{\gamma}(p+1)}{p-1}s}.
\end{equation}
We can see from \cite{BK1994} that we have for all $\theta>0$,
\begin{equation*}
e^{\theta\mathcal{L}}(y, x) =\frac{e^\theta}{(4\pi(1 - e^{-\theta}))^\frac{N}{2}}\exp \left[-\frac{|ye^{-\theta/2} - x|^2}{4(1 - e^{-\theta})}\right].
\end{equation*}
Therefore, for $r\in W^{1,\infty}(\R^N)$, we obtain

\begin{equation}\label{inequality theta}
\|\nabla e^{\theta\mathcal{L}}r\|_{L^\infty(\R^N)}\leq \frac{Ce^{\frac{\theta}{2}}}{\sqrt{1-e^{-\theta}}}\|r\|_{L^\infty(\R^N)}.
\end{equation}
Consequently, we have from  \eqref{inequality theta} and Lemma \ref{bound w}, for all $s'\in [\overline{s},s]$,

\begin{equation}\label{linear part w conclusion}
\|\nabla e^{(s'-\overline{s})\mathcal{L}_p}w(\overline{s})\|_{L^\infty(\R^N)}
\leq \frac{CC_0(s'-\bar s)}{\sqrt{1-e^{-(s'-\overline{s})}}}\|w(\overline{s})\|_{L^\infty(\R^N)}
\leq \frac{CM}{\sqrt{s'-\overline{s}}}.
\end{equation}
We have, for the rest term, that
\begin{equation*}
\begin{split}
\int^{s'}_{\bar s}\|\nabla &e^{(s'-t)\mathcal{L}_p}F(t)\|_{L^\infty(\R^N)}dt\\
\leq C&\int^{s'}_{\bar s}\frac{C_0(s'-t)}{\sqrt{1-e^{-(s-t)}}}\left(\|w\|^p_{L^\infty(\R^N)}+C_1(t)\|w\|^\gamma_{L^\infty(\R^N)}+C_2(t)\|\nabla w\|^{\overline{\gamma}}_{L^\infty(\R^N)}+1\right)dt\\
\leq C&\Bigg((1+M^p)\int^{s'}_{\bar s}\frac{C_0(s'-t)}{\sqrt{1-e^{-(s'-t)}}}(1+C_1(t))dt\\
&\ \left.+\int^{s'}_{\bar s}\frac{C_0(s'-t)}{\sqrt{1-e^{-(s'-t)}}}C_2(t)\|\nabla w\|^{\overline{\gamma}}_{L^\infty(\R^N)}dt\right).
\end{split}
\end{equation*}
Then,
\begin{equation}\label{non linear conclusion}
\begin{split}
\int^{s'}_{\overline{s}}\|\nabla e^{(s'-t)\mathcal{L}_p}F(t)\|_{L^\infty(\R^N)}dt \leq C\Bigg(&2(1+M^p)\sqrt{\delta_0}(1+C_1(\bar s))\\
&\left.+C_2(\overline{s})\int^{s'}_{\overline{s}}\frac{1}{\sqrt{s'-t}}\|\nabla w(t)\|^{\overline{\gamma}}_{L^\infty(\R^N)}dt\right).
\end{split}
\end{equation}
From \eqref{w bound},\eqref{linear part w conclusion} and \eqref{non linear conclusion}, there exists a constant $C>0$ such that
\begin{equation}\label{controll 2}
\begin{split}
\|\nabla w(s')\|_{L^\infty(\R^N)}&\leq C\left(\frac{M}{\sqrt{s'-\overline{s}}}+2(1+M^p)\sqrt{\delta_0}(1+C_1(\bar s))\right.\\
&\hspace{1.2cm} \left.+C_2(\overline{s})\int^{s'}_{\overline{s}}\frac{1}{\sqrt{s'-t}}\|\nabla w(t)\|^{\overline{\gamma}}_{L^\infty(\R^N)}dt\right)\\
&\leq \frac{\overline{M}}{2\sqrt{s'-\overline{s}}}+CC_2(\overline{s})\int^{s'}_{\overline{s}}\frac{1}{\sqrt{s'-t}}\|\nabla w(t)\|^{\overline{\gamma}}_{L^\infty(\R^N)}dt,
\end{split}
\end{equation}
with $\overline{M}=2C(M+2(1+M^p)\sqrt{\delta_0}(1+C_1(\bar s))$ and $C\geq \sqrt{\delta_0}$. We want to show for all $s'\in[\overline{s},s]$
\begin{equation*}
\|\nabla w(s')\|_{L^\infty(\R^N)}\leq \frac{\overline{M}}{\sqrt{s'-\overline{s}}}.
\end{equation*}
Provided that $s_1$ is large enough, we proceed again by contradiction, i.e. we assume there exists $s^*=\inf\{s'\geq s_1, \|\nabla w(s')\|_{L^\infty(\R^N)}>\frac{\overline{M}}{\sqrt{s'-\overline{s}}}\}$.  Then,
\begin{equation}\label{absurde hyp}
\|\nabla w(s^*)\|_{L^\infty(\R^N)}\geq \frac{\overline{M}}{\sqrt{s^*-\overline{s}}},
\end{equation}
and for all $s'\in [s_0,s^*]$,
\begin{equation*}
\|\nabla w(s')\|_{L^\infty(\R^N)}\leq \frac{\overline{M}}{\sqrt{s^*-\overline{s}}}.
\end{equation*}
By continuity, we get
\begin{equation}\label{contradition 2}
\|\nabla w(s^*)\|_{L^\infty(\R^N)}= \frac{\overline{M}}{\sqrt{s^*-\overline{s}}}.
\end{equation}
Then, from \eqref{controll 2},
\begin{equation}\label{lemma s^*}
\|\nabla w(s^*)\|_{L^\infty(\R^N)}\leq \frac{\overline{M}}{2\sqrt{s^*-\overline{s}}}+CC_2(\overline{s})\overline{M}^{\overline{\gamma}}\int^{s^*}_{\overline{s}}\frac{1}{\sqrt{s^*-t}}\frac{1}{(t-\overline{s})^{\frac{\overline{\gamma}}{2}} }dt,
\end{equation}
If we denote $\theta=t-\overline{s}$ and $\theta^*=s^*-\overline{s}$, then we have 
$$\int^{s^*}_{\overline{s}}\frac{1}{\sqrt{s^*-t}}\frac{1}{(t-\overline{s})^{\frac{\overline{\gamma}}{2}} }dt=\int^{\theta^*}_0(\theta^*-\theta)^{-\frac{1}{2}}\theta^{-\frac{\overline{\gamma}}{2}}\ dt= \left(\int^{\frac{\theta}{2}^*}_0+\int^{\theta^*}_{\frac{\theta}{2}^*}\right)(\theta^*-\theta)^{-\frac{1}{2}}\theta^{-\frac{\overline{\gamma}}{2}}\ \theta.$$
For the first integral, since $0\leq\overline{\gamma}<\frac{2p}{p+1}<2$, we have 
$$\int^{\frac{\theta}{2}^*}_0 (\theta^*-\theta)^{-\frac{1}{2}}\theta^{-\frac{\overline{\gamma}}{2}}\ dt\leq \left(\frac{\theta^*}{2}\right)^{-\frac{1}{2}}\int^{\frac{\theta}{2}^*}_0 \theta^{-\frac{\overline{\gamma}}{2}}\ dt=\frac{2}{2-\overline{\gamma}}\left(\frac{\theta^*}{2}\right)^{\frac{1-\overline{\gamma}}{2}}=\frac{2}{2-\overline{\gamma}}\left(\frac{\delta_0}{2}\right)^{\frac{1-\overline{\gamma}}{2}}\left(\frac{\theta^*}{\delta_0}\right)^{\frac{1-\overline{\gamma}}{2}}.$$
In addition, since $0\leq \frac{\theta^*}{\delta_0}\leq 1$, it follows that
$$\int^{\frac{\theta}{2}^*}_0 (\theta^*-\theta)^{-\frac{1}{2}}\theta^{-\frac{\overline{\gamma}}{2}}\ dt\leq \frac{2}{2-\overline{\gamma}}\left(\frac{\delta_0}{2}\right)^{\frac{1-\overline{\gamma}}{2}}\left(\frac{\theta^*}{\delta_0}\right)^{-\frac{1}{2}}=\frac{2^{\frac{1+\overline{\gamma}}{2}}\delta_0^{1-\frac{\overline{\gamma}}{2}}}{2-\overline{\gamma}}(\theta^*)^{-\frac{1}{2}}.$$
For the second one, we get 
\begin{equation*}
\begin{split}
\int_{\frac{\theta}{2}^*}^{\theta^*} (\theta^*-\theta)^{-\frac{1}{2}}\theta^{-\frac{\overline{\gamma}}{2}}\ dt &\leq \left(\frac{\theta^*}{2}\right)^{-\frac{\overline{\gamma}}{2}}\int_{\frac{\theta}{2}^*}^{\theta^*} (\theta^*-\theta)^{-\frac{1}{2}}\ dt=2\left(\frac{\theta^*}{2}\right)^{\frac{1-\overline{\gamma}}{2}}=2^{\frac{1+\overline{\gamma}}{2}}\delta_0^{\frac{1-\overline{\gamma}}{2}}\left(\frac{\theta^*}{\delta_0}\right)^{\frac{1-\overline{\gamma}}{2}}\\
&\leq 2^{\frac{1+\overline{\gamma}}{2}}\delta_0^{1-\frac{\overline{\gamma}}{2}}(\theta^*)^{-\frac{1}{2}}.
\end{split}
\end{equation*}
Therefore, \eqref{lemma s^*}) becomes
\begin{equation*}
\|\nabla w(s^*)\|_{L^\infty(\R^N)}\leq \frac{\overline{M}}{2\sqrt{s^*-\overline{s}}}+Ce^{-\frac{2p-\overline{\gamma}(p+1)}{p-1}(s-\delta_0)}\overline{M}^{\overline{\gamma}}2^{\frac{1+\overline{\gamma}}{2}}\delta_0^{1-\frac{\overline{\gamma}}{2}}\left(1+\frac{1}{2-\overline{\gamma}}\right)\frac{1}{\sqrt{s^*-\overline{s}}}.
\end{equation*}
We just need to take $s_1>-\frac{p-1}{2p-\overline{\gamma}(p+1)}\log \left(\frac{\overline{M}^{1-\overline{\gamma}}}{C^*2^{\frac{3+\overline{\gamma}}{2}}\delta_0^{1-\frac{\overline{\gamma}}{2}}(1+\frac{1}{2-\overline{\gamma}})}\right)+\delta_0$ in order to have $\|\nabla w(s^*)\|_{L^\infty(\R^N)}< \frac{\overline{M}}{\sqrt{s^*-\overline{s}}}$, which contradicts \eqref{contradition 2}. Thus, it concludes the proof.
\end{proof}

Now, we can derive the following estimates of $\alpha_1$ and $\alpha_2$.
\begin{lemma}[\textbf{Bounds on }$\mathbf{\alpha_1}$ \textbf{ and }$\mathbf{\alpha_2}$]\label{lemma:estimation alpha_1/2}
There exists a constant $C>0$, such that for all $y\in \R^N$ and $s\geq s_1$, we have

\begin{equation}\label{estimation alpha_1 1}
|\alpha_1(y,s)|\leq C \left(\frac{1+|y|^2}{s}\right),
\end{equation}
\begin{equation}\label{alpha_1 sqrt s}
\alpha_1(y,s)\leq \frac{C}{\sqrt{s}},
\end{equation}
\begin{equation}\label{estimation alpha_1 3}
\left|\alpha_{1}(y,s)-\frac{1}{2s}\left(N-\frac{|y|^2}{2}\right)\right|\leq C\left(\frac{1+|y|^4}{s^{3/2}}\right),
\end{equation}
\begin{equation}\label{estimation alpha_2}
|\alpha_2(y,s)|\leq Ce^{ -\nu s},
\end{equation}
where $\alpha_1$ and $\alpha_2$ are defined in \eqref{alpha1}, \eqref{alpha2}, and $\nu$ is defined in \eqref{estimation dim 1}.
\end{lemma}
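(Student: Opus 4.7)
The estimates on $\alpha_1$ all stem from the same starting point: writing $F(x)=|x|^{p-1}x$, the fundamental theorem of calculus gives
\[
\alpha_1(y,s)=p\int_0^1 |W_\tau(y,s)|^{p-1}\,d\tau-\frac{p}{p-1},\qquad W_\tau:=\tau w_1+(1-\tau)w_2.
\]
Using the decomposition $w_i=\varphi+q_i$ with $\varphi(y,s)=f(y/\sqrt{s})+N\kappa/(2ps)$ and $q_i(s)\in V_A(s)$, I write $W_\tau=\varphi+r_\tau$ with $|r_\tau|\le CA^2/\sqrt{s}$ globally by Proposition \ref{def V_A}\ref{ii}, and the sharper bound $|r_\tau|\le CA^2(1+|y|^3)(\log s)/s^2$ on $\{|y|\le K_0\sqrt{s}\}$ by Proposition \ref{def V_A}\ref{i}. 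For $s$ large, $\varphi$ is bounded below on this region, so $W_\tau>0$ there and the absolute value may be dropped.

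Taylor expanding $W_\tau^{p-1}$ around $\varphi$ and integrating in $\tau$ yields
\[
\alpha_1=\Bigl(p\varphi^{p-1}-\tfrac{p}{p-1}\Bigr)+O\bigl(\varphi^{p-2}(|q_1|+|q_2|)+\max_i|q_i|^2\bigr),
\]
and a direct Taylor expansion of the explicit expression for $f$ in powers of $y/\sqrt{s}$ produces, on $\{|y|\le K_0\sqrt{s}\}$,
\[
p\varphi^{p-1}(y,s)-\frac{p}{p-1}=\frac{1}{2s}\Bigl(N-\frac{|y|^2}{2}\Bigr)+O\Bigl(\frac{1+|y|^4}{s^2}\Bigr).
\]
Combined with the sharp bound of Proposition \ref{def V_A}\ref{i} on the $q_i$'s, this proves both \eqref{estimation alpha_1 1} and \eqref{estimation alpha_1 3} on the interior region. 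On the complement $\{|y|\ge K_0\sqrt{s}\}$, $(1+|y|^2)/s\gtrsim 1$ while $|\alpha_1|\le pM^{p-1}+p/(p-1)$ by Lemma \ref{bound w}, which closes \eqref{estimation alpha_1 1}. For the one-sided bound \eqref{alpha_1 sqrt s}, observe that $W_\tau\le\kappa+CA^2/\sqrt{s}$ uniformly since $f\le\kappa$, hence $pW_\tau^{p-1}\le p/(p-1)+C/\sqrt{s}$ by a single Taylor expansion of $x\mapsto x^{p-1}$ around $\kappa$.

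The bound on $\alpha_2$ is a direct application of the growth hypothesis \eqref{condition h} to each $h_i$, combined with the uniform bounds $\|w_i\|_\infty\le M$ (Lemma \ref{bound w}) and $\|\nabla w_i\|_\infty\le M'$ (Proposition \ref{bound nabla w}): the prefactor $e^{-ps/(p-1)}$ multiplied by the amplifications $e^{\gamma s/(p-1)}$ and $e^{\bar\gamma(p+1)s/(2(p-1))}$ produces three exponentially decaying terms whose rates are each at least $\nu$ by the definition of $\gamma_0$.

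The technically delicate step is the uniform-in-$y$ remainder control needed for \eqref{estimation alpha_1 3}: one must simultaneously keep the quadratic term $-|y|^2/(4s)$, the $O(1/s)$ correction coming from the piece $N\kappa/(2ps)$ of $\varphi$ (which is what produces the coefficient $N$), and absorb both the Taylor $O(|y|^4/s^2)$ remainder and the $\varphi^{p-2}r_\tau$ contribution into the claimed error. This absorption requires the interior bound of Proposition \ref{def V_A}\ref{i} on $r_\tau$ — the global bound \ref{ii} is too crude — so the whole strategy rests on the interior/exterior split at $|y|=K_0\sqrt{s}$.
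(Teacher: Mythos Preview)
Your proof is correct and follows essentially the same route as the paper: both represent $\alpha_1$ via a mean-value/integral identity for $x\mapsto|x|^{p-1}x$, Taylor-expand (you around $\varphi$, the paper around the constant $\kappa$), and feed in the $V_A$ bounds on the $q_i$'s together with the interior/exterior split at $|y|=K_0\sqrt{s}$; the $\alpha_2$ estimate is identical in both. Your organization around $\varphi$ is a cosmetic variant of the paper's organization around $\kappa$, and your one-sided argument for \eqref{alpha_1 sqrt s} is in fact slightly cleaner than the paper's invocation of Claim~(i) there.
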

\begin{proof}

We first claim the following: 

\begin{claim}\label{claim alpha 1}
\begin{enumerate}[label=(\roman*)]
There exists $w_0\in (w_1,w_2)$, such that
\item $|\alpha_1|\leq C(M)|w_0-\kappa|$,
\item $|\alpha_1-p(p-1)\kappa^{p-2}(w_0-\kappa)|\leq C(w_0-\kappa)^2$,\label{Claim alpha1 second order}
\end{enumerate}
where $M$ is given in Lemma \ref{bound w}.
\end{claim}
Let us finish the proof and we will come back later to the claim.
\begin{itemize}
\item \textbf{Proof of \eqref{estimation alpha_1 1}: }We know from \eqref{expression f} and \eqref{equality w and q}, for $i=1,2$ and for all $y\in \R^N$ and $s\geq s_0$, that 
$$|w_i(y,s)-\kappa|\leq \left|f\left(\frac{y}{\sqrt{s}}\right)-f(0)\right|+|q_i(y,s)|+\frac{N\kappa}{2ps}.$$
Using a Taylor expansion, we get
$$|w_i(y,s)-\kappa|\leq C\frac{|y|^2}{s}+|q_i(y,s)|+\frac{N\kappa}{2ps}.$$
From \eqref{def q_e, q_b}, we write again $q$ as
$$q_i=q_{i,e}+q_{i,b}.$$
Using Proposition \ref{def V_A}, we get
\begin{equation}\label{q_bi}
\begin{split}
|q_{i,e}(y,s)|&\leq C\left(\frac{A}{s^2}(1+|y|)+\frac{A^2\log s}{s^2}(1+|y|^2)\right.\\
&\hspace{1.1cm}\left.+\frac{A}{s^2}(1+|y|^3)\right)1_{\{|y|\leq 2K_0\sqrt{s}\}}(y, s).
\end{split}
\end{equation}
Then,
\begin{equation*}
\begin{split}
|q_{i,b}(y,s)|&\leq C \left(\frac{A}{s^2}(1+|y|)+\frac{A^2\log s}{s^2}(1+|y|^2)\right.\\
&\hspace{1.1cm}\left.+\frac{A}{s^2}(1+|y|^2)(1+2K_0\sqrt{s})\right)1_{\{|y|\leq 2K_0\sqrt{s}\}}(y, s)\\
&\leq C \frac{A}{s^{3/2}}(1+|y|^2).
\end{split}
\end{equation*}

Again from Proposition \ref{def V_A}, we obtain
\begin{equation}\label{bound q_e}
\|q_{i,e}(s)\|_{L^\infty(\R^N)}\leq \frac{A^2}{\sqrt{s}}.
\end{equation}
Since we have $supp\ q_{e,i}=\{(y,s), |y|\geq 2K_0\sqrt{s}\}$, then 
\begin{equation}\label{q_ei}
|q_{i,e}(y,s)|\leq \frac{(Ay)^2}{4K_0^2s^\frac{3}{2}}.
\end{equation}
Then, 
\begin{equation}\label{control q_i}
|q_i(y,s)|\leq CA^2\frac{1+|y|^2}{s^{3/2}}.
\end{equation}

Since we have $|w_0-\kappa|\leq \max(|w_1-\kappa|,|w_2-\kappa|)$ from Lemma \ref{bound w}, therefore,
\begin{equation}\label{bound w-kappa}
|w_0-\kappa|\leq C \frac{1+|y|^2}{s}.
\end{equation}

Together with Claim $(i)$, we obtain \eqref{estimation alpha_1 1}.


\item \textbf{Proof of \eqref{alpha_1 sqrt s}: }Using \eqref{equality w and q} and Proposition \ref{def V_A} $(ii)$, we have, for $i=1,2$ and for all $s\geq s_0,\ y\in \R^N,$
$$w_i-\kappa\leq \frac{C}{s}+|q_i(y,s)|\leq \frac{C}{\sqrt{s}}.$$
Since $w_0-\kappa\in (w_1-\kappa,w_2-\kappa)$, with Claim $(i)$ and Proposition \ref{def V_A} $(ii)$, we may conclude \eqref{alpha_1 sqrt s}.

\item \textbf{Proof of \eqref{estimation alpha_1 3}: } From Claim $(ii)$, we have
\begin{equation}\label{DL alpha 1}
\alpha_1= p(p-1)\kappa^{p-2}|w_0-\kappa|+ O(|w_0-\kappa|^2).
\end{equation}
With a Taylor expansion in the variable $z=\frac{y}{\sqrt{s}}$, we have
$$f(z)=\kappa\left((p-1)+\frac{(p-1)^2}{4p}|z|^2\right)^{-\frac{1}{p-1}}=\kappa\left(1-\frac{1}{4p}|z|^2+o(|z|^4)\right).$$
Therefore, with \eqref{equality w and q} and \eqref{control q_i},
\begin{equation}
\begin{split}
w_0(y,s)-\kappa&=f\left(\frac{y}{\sqrt{s}}\right)-\kappa+q_0(y,s)+\frac{N\kappa}{2ps}\\
&=-\frac{\kappa }{4ps}|y|^2+O\left(\frac{|y|^4}{s^2}\right)+O\left(\frac{1+|y|^2}{s^{\frac{3}{2}}}\right)+\frac{N\kappa}{2ps}\\
&=\frac{\kappa}{2ps}\left(N-\frac{|y|^2}{2}\right)+O\left(\frac{|y|^4}{s^2}\right)+O\left(\frac{1+|y|^2}{s^{\frac{3}{2}}}\right).
\end{split}
\end{equation}
Together with \eqref{DL alpha 1} and \eqref{bound w-kappa}, we get
\begin{equation}
\begin{split}
\alpha_1(y,s)=\frac{1}{2s}\left(N-\frac{|y|^2}{2}\right)+O\left(\frac{|y|^4}{s^2}\right)+O\left(\frac{1+|y|^2}{s^{\frac{3}{2}}}\right)+O\left(\frac{1+|y|^4}{s^{2}}\right).
\end{split}
\end{equation}
Hence, \eqref{estimation alpha_1 3}.

\item \textbf{Proof of \eqref{estimation alpha_2}: } Now, for the last estimation on $\alpha_2$, by definition \eqref{alpha2} of $\alpha_2$,  \eqref{condition h}, Proposition \eqref{bound nabla w} and Lemma \eqref{bound w}, we obtain
\begin{equation}\label{alpha 2 bound computations}
\begin{split}
|\alpha_2(s)|&\leq C\left(e^{-\frac{p-\gamma}{p-1}s}(|w_1(s)|^\gamma+|w_2(s)|^\gamma)+e^{-\frac{2p-\overline{\gamma}(p+1)}{p-1}s}(|\nabla w_1(s)|^{\overline{\gamma}}+|\nabla w_2(s)|^{\overline{\gamma}})+e^{-\frac{p}{p-1} s}\right)\\
&\leq Ce^{-\nu s}\left(2M^\gamma+2M'^{\overline{\gamma}}+1\right)\\
&\leq Ce^{-\nu s},
\end{split}
\end{equation}
where $\frac{p-\gamma_0}{p-1}=\nu$ and $\gamma_0=\max\{\gamma,\overline{\gamma}(p+1)-p\}$.
\end{itemize}
This concludes the proof of Lemma \ref{lemma:estimation alpha_1/2} assuming Claim \ref{claim alpha 1} holds.
It remains to prove the claims we asserted at the beginning of the proof.
\end{proof}

\begin{proof}[Proof of Claim \ref{claim alpha 1}]
We know by definition \eqref{alpha1}, for some $w_0\in (w_1,w_2)$, that 
\begin{equation}\label{expression alpha_1}
\alpha_1= p|w_0|^{p-1}-\frac{p}{p-1}= p(|w_0|^{p-1}-\kappa^{p-1}).
\end{equation}
\begin{enumerate}[label=(\roman*)]
\item  If $|w_0|\geq \frac{\kappa}{2}$, then$|w_0|\leq \max(|w_1|,|w_2|)\leq M$, from Lemma \ref{bound w}. Therefore, $x\mapsto|x|^{p-1}$ is $C^1$ on the compact set $\{\frac{\kappa}{2}\leq |x|\leq \max(M,\kappa)\}$. Thus, \eqref{expression alpha_1} becomes
$$|\alpha_1|\leq C(M,p)|w_0-\kappa|.$$\\
If $|w_0|\leq \frac{\kappa}{2}$, then we have that $|w_0-\kappa|\geq \frac{\kappa}{2}$. Since we have that $|w_0|\leq M$, from Lemma \ref{bound w}, then
$$|\alpha_1|\leq pM^{p-1}\leq C(M,p)|w_0-\kappa|.$$
\item If $|w_0|\geq\frac{\kappa}{2}$, then we have $|w_0|\leq \max(|w_1|,|w_2|)\leq M$, from Lemma \ref{bound w}. Using a Taylor expansion on $\alpha_1$, we have
$$\alpha_1=p(p-1)\kappa^{p-2}(w_0-\kappa)+o((w_0-\kappa)^2),$$
which implies 
$$|\alpha_1-p(p-1)\kappa^{p-2}(w_0-\kappa)|\leq C|w_0-\kappa|^2.$$
If $|w_0|<\frac{\kappa}{2}$, then, with the same arguments used in $(i)$, we can conclude that \ref{Claim alpha1 second order} holds. This concludes the proof of Lemma \ref{lemma:estimation alpha_1/2}.
\end{enumerate}
\end{proof}

\section{$L^2_\rho$ estimate on $g$}\label{L 2 estimate}
Let $u_i$ for $i=1,2$ be solutions of \eqref{eq:1} with the perturbation term $h_i$ such that $u_i\in S_{a_i,T_i,h_i}$. Then, we have the following $L^2_\rho$ estimate of the difference between $w_i$ defined in \eqref{cv}:
\begin{prp}[$L^2_\rho$ estimate on $g$]\label{prp:g in L^2_rho}
We have
\begin{equation}\label{estimation L^2_rho}
\|g(s)\|_{L^2_\rho}=O\left(\frac{1}{s^2}\right),
\end{equation}
where $g$ is defined in \eqref{defw1w2g}. Moreover, for N=1, there exists $\sigma_0\in \R$ such that
\begin{equation}\label{estimation L^2_rho N=1}
\|w_1(s)-w_2(s+\sigma_0)\|_{L^2_\rho}= O\left( \max (\frac{e^{-s/2}}{s^3}, e^{-\nu s+C \sqrt{s}})\right),
\end{equation}
where $\nu$ is defined in \eqref{estimation dim 1}. We recall that $\rho(y)= \frac{e^{-\frac{|y|^2}{4}}}{(4\pi)^{\frac{N}{2}}}$.
\end{prp}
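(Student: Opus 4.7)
The plan is to exploit equation (3.3), $\partial_s g = \mathcal{L}g + \alpha_1 g + \alpha_2$, via the spectral decomposition of $\mathcal{L}$ on $L^2_\rho$ whose eigenvalues are $\{1-n/2\}_{n\in\N}$. First I would split $g = g_b + g_e$ with the cutoff $\chi$ from Section 2.2.2, and expand $g_b = \sum_{n=0}^{2} g_n(s) h_n(y) + g_-(y,s)$ as in (2.11). Since $g = q_1 - q_2$ and each $q_i(s) \in V_A(s)$, Proposition 2.1 immediately gives the rough bounds $|g_n(s)| \le 2A/s^2$ for $n \in \{0,1\}$, $|g_2(s)| \le 2A^2 \log s / s^2$, $|g_-(y,s)| \le 2A(1+|y|^3)/s^2$, and $\|g_e(s)\|_{L^\infty} \le 2A^2/\sqrt{s}$. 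Converting to $L^2_\rho$: Gaussian moments absorb the polynomial weights and yield $O(1/s^2)$ for $g_0$, $g_1 h_1$, and $g_-$; the support $\{|y|\ge K_0\sqrt{s}\}$ of $g_e$ makes $\|g_e\|_{L^2_\rho} \le Ce^{-cs}$, which is negligible; only the $g_2$ contribution is suboptimal at $O(\log s / s^2)$, and must be refined.

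The heart of the argument is upgrading $|g_2|$ to $O(1/s^2)$. Projecting (3.3) onto $h_2$ and using $\mathcal{L}h_2 = 0$ yields
\[
g_2'(s) = \int k_2(y)\,\alpha_1(y,s)\,g(y,s)\,\rho(y)\,dy + \int k_2(y)\,\alpha_2(y,s)\,\rho(y)\,dy.
\]
Inserting the expansion (3.7), $\alpha_1 = \tfrac{1}{2s}(N - |y|^2/2) + O((1+|y|^4)s^{-3/2})$, and the spectral decomposition of $g$, the diagonal self-interaction of $g_2 h_2$ with $\tfrac{1}{2s}(N-|y|^2/2)$ produces a dissipative term $-c_N g_2/s$ with $c_N > 0$ (a standard Hermite moment computation). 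The cross terms involving $g_0, g_1, g_-$ reduce, after Gaussian integration against the $V_A$ weights, to a remainder of order $O(1/s^3)$, and $\int k_2 \alpha_2 \rho\,dy = O(e^{-\nu s})$ by (3.10). Duhamel integration of $g_2' = -(c_N/s)g_2 + O(1/s^3)$ then gives $g_2(s) = O(1/s^2)$, and summing all spectral components in $L^2_\rho$ proves (3.14).

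For the one-dimensional sharper estimate (3.15), I would introduce $\tilde g^{\sigma}(y,s) = w_1(y,s) - w_2(y, s+\sigma)$ and apply a continuity/shooting argument in $\sigma \in \R$ (using the crude bound (2.13) and continuous dependence on $\sigma$) to select $\sigma_0$ for which the unstable projection $\tilde g^{\sigma_0}_0$ vanishes at a reference time. Since $h_0$ spans the only expanding direction of $\mathcal{L}$ (eigenvalue $1$), this makes $|\tilde g^{\sigma_0}_0(s)| \lesssim e^{-\nu s + C\sqrt{s}}$ via integration of its ODE against the $V_A$ a priori bound. The $h_1, h_2$ modes and $g_-$ are then controlled by iterating the Duhamel formula with the semigroup $e^{s\mathcal{L}}$, whose stable part decays as $e^{-s/2}$; the polynomial factor $s^3$ arises from Gaussian-moment bookkeeping, and the accumulated singular kernels $(s-t)^{-1/2}$ coming from the parabolic regularity against the $\alpha_1$-perturbation (as in Proposition 3.2) produce the $e^{C\sqrt{s}}$ correction.

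The main technical obstacle is the Hermite computation guaranteeing $c_N > 0$ together with the sharp $O(1/s^3)$ control of the source — both needed so that the Duhamel integral in Step 2 does not reintroduce a $\log s$; this forces the careful use of the $V_A$ weights and moment identities. In the 1D case, the shooting for $\sigma_0$ and the precise tracking of $\sqrt{s}$-amplifications through repeated Duhamel iterations (to land exactly on the $e^{C\sqrt{s}}$ factor and not worse) are the secondary bookkeeping difficulties.
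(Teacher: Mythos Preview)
Your route to the first estimate $\|g(s)\|_{L^2_\rho}=O(1/s^2)$ is more direct than the paper's, but it has a borderline gap at the integration step. The Hermite computation gives exactly $c_N=2$ (this is the $-\tfrac{2}{s}l_2$ in the paper's Proposition~4.5(i)), and the cross-terms with $g_0$ and $g_4$ that survive in the projection of $\tfrac{1}{2s}(N-|y|^2/2)g$ onto $k_\beta$, $|\beta|=2$, are only $O(1/s^3)$ from the $V_A$ bounds---not $o(1/s^3)$. The ODE $g_2'=-\tfrac{2}{s}g_2+O(1/s^3)$ then integrates to $g_2=O(\log s/s^2)$, which is exactly what you started with, so the argument does not close as written. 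The paper avoids this by not using $V_A$ directly: it sets up a coupled system for $I(s)=\|g\|_{L^2_\rho}$ and $l_n=\|P_ng\|_{L^2_\rho}$, proves a dichotomy (Proposition~4.4) that either some $l_n$ dominates $\tilde I=I+se^{-\nu s}$ or $\tilde I$ decays exponentially, and in the dominant case $\tilde I\sim l_2$ the cross terms become $O(l_2/s^2)$ \emph{multiplicatively}, which integrates cleanly. Your approach could be repaired by first bootstrapping $g_0,g_1,g_4$ to $O(1/s^3)$ via their own ODEs (they have nonzero eigenvalues), but that step is missing.

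The one-dimensional refinement has a more serious conceptual error: you have misidentified the obstruction. The barrier to improving beyond $O(1/s^2)$ is the \emph{neutral} mode $l_2$ (eigenvalue $0$), not the unstable mode $l_0$ (eigenvalue $1$). The unstable modes $l_0,l_1$ are already $O(\tilde I/s)$ by backward integration (Proposition~4.4(i)); killing $\tilde g_0^{\sigma_0}$ at a reference time gains nothing. Moreover, $e^{s\mathcal L}$ does not decay at all on the $h_2$ direction, so your claim that ``the $h_1,h_2$ modes and $g_-$ are controlled by \ldots\ the semigroup \ldots\ whose stable part decays as $e^{-s/2}$'' is wrong for $h_2$. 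What the paper actually does is use the time-shift $\sigma_0$ to cancel the leading $C_2/s^2$ part of $\bar l_2$: since $w_{2,2}(s)\sim -C/s$ (Lemma~4.8, via Filippas--Kohn), replacing $s$ by $s+\sigma_0$ perturbs $w_{2,2}$ at order $\sigma_0/s^2$, and $\sigma_0$ is chosen so that $\bar l_2(s)=o(1/s^2)$ (Lemma~4.7). This forces $\bar g$ into cases~(ii) or~(iii) of Corollary~4.6, which give exactly the bound in~(4.2).
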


We will use the decomposition of $g$ according to the spectrum of $\mathcal{L}$ on $L^2_\rho(\R^N)$.  We have the following decomposition of $g$:
\begin{equation}\label{decomposition g}
g(y,s)=g_0(s)+g_1(s).y+ y^T g_2(s)y-2\ tr g_2(s)+ g_-(y,s),
\end{equation}
where 
\begin{equation*}
\begin{aligned}
g_0(s)&=P_0g(y,s),\\
g_1(s).y&=P_1g(y,s),\\
y^T g_2(s)y-2\ tr g_2(s)&=P_2g(y,s),\\
g_-(y,s):&=\underset{n\geq 3}{\sum}P_n g(y,s),
\end{aligned}
\end{equation*}
and $P_n$ is given in \eqref{def projection Pn}. Since $1-\frac{n}{2}$, for $n\geq 3$, are negative eigenvalues, it makes sense to take care of the non-negative eigenvalues, i.e. $n=0$, $n=1$ and $n=2$ in \eqref{decomposition g}.\\
From \eqref{orthogonality property} , we can notice that 
\begin{equation}\label{decomposition}
I(s)^2:=\|g(s)\|_{L^2_\rho}^2=\underset{n\geq 0}{\sum}l_n(s)^2.
\end{equation}
with $l_n(s)=\|P_n g(s)\|_{L^2_\rho}.$

\bigskip

In the following, we give the dynamics of $I(s)$ and $l_n(s)$:

\subsection{On the existence of a dominating component}
In this section,  we proceed in two steps:\\
\textbf{Step 1:} We project the equation \eqref{eq3: g} on the various components.\\
\textbf{Step 2:} We discuss the question of the existence of a dominating component of $g$.

\bigskip

\textbf{Step 1 (Projection on the components):} The following Lemma gives us the evolution of $I$ and $l_n$.
\begin{lemma}[\textbf{Evolution of $I(s)$ and $l_n(s)$}]\label{lemma:control of L_n with I nouveau}

There exist $s_2\geq s_1$ such that for all $n\in\N$ there exists $C_n$, such that for all $s\geq s_2$, we have
\begin{equation}\label{eq ln I}
|l_{n}'(s)+(\frac{n}{2}-1)l_{n}(s)|\leq C_n(\frac{1}{s}I(s)+e^{-\nu s}),
\end{equation}
and 
\begin{equation}\label{eq I}
I'(s)\leq (1-\frac{n+1}{2}+\frac{C}{\sqrt{s}})I(s)+Ce^{-\nu s}+\underset{k=0}{\overset{n}{\sum}}\frac{n+1-k}{2}l_{k}(s),
\end{equation}
where $\nu$ is defined in \eqref{estimation dim 1}.
\end{lemma}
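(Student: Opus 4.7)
The plan is to project the equation $\partial_s g = \mathcal{L}g + \alpha_1 g + \alpha_2$ onto the spectral components of $\mathcal{L}$ on $L^2_\rho$. For each multi-index $\beta$ with $|\beta|=n$, multiplying by $k_\beta\rho$ and integrating gives the scalar ODE
\begin{equation*}
g_\beta'(s) = \left(1-\tfrac{n}{2}\right) g_\beta(s) + \int_{\mathbb{R}^N} k_\beta(y)\bigl(\alpha_1(y,s) g(y,s) + \alpha_2(y,s)\bigr)\rho(y)\,dy.
\end{equation*}
The key point is to control the two integral source terms. For the $\alpha_1 g$ term, I would apply the pointwise estimate $|\alpha_1(y,s)| \leq C(1+|y|^2)/s$ from \eqref{estimation alpha_1 1}, then Cauchy--Schwarz in $L^2_\rho$, using the fact that $k_\beta(1+|y|^2)$ is a polynomial and hence has finite $L^2_\rho$ norm depending only on $n$; this yields $|\int k_\beta \alpha_1 g\rho|\leq C_n I(s)/s$. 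For the $\alpha_2$ term, the uniform bound \eqref{estimation alpha_2} gives $|\int k_\beta \alpha_2 \rho|\leq C_n e^{-\nu s}$. Since there are only finitely many $\beta$ with $|\beta|=n$, and $l_n^2 = \sum_{|\beta|=n} g_\beta^2 \|h_\beta\|_{L^2_\rho}^2$, an elementary manipulation (working with $l_n^2$ and dividing, or using $|l_n'| \leq (\sum |g_\beta'|^2\|h_\beta\|^2)^{1/2}$ up to a reshuffling) delivers \eqref{eq ln I}.

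For the global estimate \eqref{eq I}, the plan is to differentiate $I(s)^2=\|g(s)\|_{L^2_\rho}^2$ directly. This gives
\begin{equation*}
I\,I' = \langle g,\mathcal{L}g\rangle_{L^2_\rho} + \langle g,\alpha_1 g\rangle_{L^2_\rho} + \langle g,\alpha_2\rangle_{L^2_\rho}.
\end{equation*}
By the orthogonal decomposition \eqref{decomposition}, $\langle g,\mathcal{L}g\rangle = \sum_{k\geq 0}(1-k/2)l_k^2$. I would rewrite this as
\begin{equation*}
\sum_{k\geq 0}\bigl(1-\tfrac{k}{2}\bigr)l_k^2 = \bigl(1-\tfrac{n+1}{2}\bigr)I^2 + \sum_{k=0}^{n}\tfrac{n+1-k}{2}l_k^2 \;-\; \sum_{k\geq n+2}\tfrac{k-n-1}{2}l_k^2,
\end{equation*}
and drop the last (nonpositive) tail. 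For the quadratic form in $\alpha_1$, I would use the \emph{upper} pointwise bound $\alpha_1 \leq C/\sqrt{s}$ from \eqref{alpha_1 sqrt s} (the sign being immaterial here since only an upper bound is needed) to get $\langle g,\alpha_1 g\rangle \leq (C/\sqrt{s})\,I^2$. For the linear term, Cauchy--Schwarz with $\|\alpha_2\|_{L^2_\rho}\leq Ce^{-\nu s}$ yields $\langle g,\alpha_2\rangle\leq Ce^{-\nu s}I$. Dividing by $I$ and bounding $l_k^2 \leq I\, l_k$ (valid since $l_k\leq I$) produces exactly \eqref{eq I}.

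The principal technical subtlety lies in the division by $I$ at possible zeros, which I would handle by the standard trick of working with $\sqrt{I^2+\varepsilon}$ and passing $\varepsilon\to 0$, or equivalently by proving the differential inequality first for $I^2$. A secondary point is that one must not use the $|\alpha_1|\leq C(1+|y|^2)/s$ estimate inside the quadratic form $\langle g,\alpha_1 g\rangle$, because the weight $|y|^2$ would not be absorbed by $\|g\|_{L^2_\rho}^2$; it is precisely for this bilinear step that the seemingly weaker bound $\alpha_1\leq C/\sqrt{s}$ is indispensable, whereas the sharper $1/s$ rate survives in the projection on $P_n$ thanks to the extra polynomial decay provided by the Gaussian weight against $k_\beta$.
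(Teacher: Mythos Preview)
Your proposal is correct and follows essentially the same approach as the paper: projecting \eqref{eq3: g} onto $k_\beta\rho$ with the bound \eqref{estimation alpha_1 1} and Cauchy--Schwarz for \eqref{eq ln I}, then differentiating $I^2$, using the spectral decomposition of $\langle g,\mathcal{L}g\rangle$, the one-sided bound \eqref{alpha_1 sqrt s} on the quadratic term, and $l_k^2\leq I\,l_k$ to obtain \eqref{eq I}. The only cosmetic difference is that the paper disposes of the case $I(s)=0$ by observing that $I$ then has a local minimum so $I'(s)=0$, rather than your $\sqrt{I^2+\varepsilon}$ regularization; both are standard.
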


Before proving this lemma, let us first give the following corollary related to 
$$\overset{\sim}{I}(s):=I(s)+se^{-\nu s}.$$
\begin{cor}\label{cor: dyn I and l tilde}
It holds that for all $n\in\N$, there exists $C_n$ such that for all $s\geq s_2$, we have
\begin{equation}\label{dyn I & l}
|l_{n}(s)'+(\frac{n}{2}-1)l_{n}(s)|\leq C_n\left(\frac{\overset{\sim}{I}(s)}{s}\right).
\end{equation}
Moreover, for $n=0,1$,
\begin{equation}\label{dyn I & l 0&1}
\overset{\sim}{I'}(s)\leq \left(1-\frac{n+1}{2}+\frac{C}{\sqrt{s}}\right)\overset{\sim}{I}(s)+\underset{k=0}{\overset{n}{\sum}}\frac{n+1-k}{2}l_{k}(s).
\end{equation}
In addition, for $n\geq 2$, there exists $s_1\geq s_0$ such that for all $s\geq s_1$,

\begin{equation}\label{dyn I & l n>1}
\overset{\sim}{I'}(s)\leq -\min\left(\frac{n+1}{2}-1,\nu\right)\overset{\sim}{I}(s)+\frac{C}{\sqrt{s}}\overset{\sim}{I}(s)+\underset{k=0}{\overset{n}{\sum}}\frac{n+1-k}{2}l_{k}(s),
\end{equation}
where $\nu$ is given in \eqref{estimation dim 1}.
\end{cor}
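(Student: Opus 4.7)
The corollary is essentially an algebraic reorganization of Lemma \ref{lemma:control of L_n with I nouveau} in which the exponential forcing $e^{-\nu s}$ is absorbed into the modified quantity $\widetilde{I}(s) = I(s) + s e^{-\nu s}$. The plan is thus pure bookkeeping, and I expect no serious obstacles beyond choosing a large enough $s_1$.

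The estimate \eqref{dyn I & l} is immediate: since $\frac{1}{s}I(s) + e^{-\nu s} = \frac{1}{s}\bigl(I(s) + s e^{-\nu s}\bigr) = \frac{\widetilde{I}(s)}{s}$, the bound \eqref{eq ln I} already gives \eqref{dyn I & l} with the same constants $C_n$.

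For the evolution of $\widetilde{I}$, I would differentiate $\widetilde{I}'(s) = I'(s) + (1 - \nu s)e^{-\nu s}$, insert \eqref{eq I}, and substitute $I(s) = \widetilde{I}(s) - s e^{-\nu s}$ in the leading linear term. This produces
\[
\widetilde{I}'(s) \leq \Bigl(1 - \tfrac{n+1}{2} + \tfrac{C}{\sqrt{s}}\Bigr)\widetilde{I}(s) + R_n(s) + \sum_{k=0}^{n} \tfrac{n+1-k}{2}\, l_k(s),
\]
where the residual has the explicit form
\[
R_n(s) = e^{-\nu s}\Bigl[\, s\bigl(\tfrac{n-1}{2} - \nu\bigr) - C\sqrt{s} + C + 1 \,\Bigr].
\]
For $n \in \{0,1\}$, the coefficient $\tfrac{n-1}{2} - \nu$ is strictly negative, hence $R_n(s) \leq 0$ for $s \geq s_1$ large enough, which yields \eqref{dyn I & l 0&1} directly.

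For $n \geq 2$ I set $\alpha = \tfrac{n+1}{2} - 1 > 0$ and split $-\alpha\, \widetilde{I} = -\min(\alpha,\nu)\widetilde{I} - \bigl(\alpha - \min(\alpha,\nu)\bigr)\widetilde{I}$. Using the trivial lower bound $\widetilde{I}(s) \geq s e^{-\nu s}$, the remaining task is to verify
\[
R_n(s) \leq \bigl(\alpha - \min(\alpha,\nu)\bigr)\, s e^{-\nu s}.
\]
Dividing through by $e^{-\nu s}$, this reduces to the scalar inequality $s\bigl(\min(\alpha,\nu) - \nu\bigr) + C + 1 \leq C\sqrt{s}$: when $\alpha \geq \nu$ the $s$-coefficient vanishes and the inequality holds for $s$ large; when $\alpha < \nu$ the coefficient is strictly negative so the left-hand side tends to $-\infty$. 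In both regimes the inequality holds for $s \geq s_1$ provided $s_1$ is chosen large enough (and possibly depending on $n$, as the statement allows), which delivers \eqref{dyn I & l n>1}.
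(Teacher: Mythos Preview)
Your proof is correct and follows exactly the approach the paper sketches: the paper's own argument consists of the single observation $(se^{-\nu s})'=(-\nu+\tfrac{1}{s})se^{-\nu s}$, and you have simply carried out the bookkeeping explicitly, including the residual $R_n(s)$ and the case split on $\alpha$ versus $\nu$ that the paper leaves implicit. The one cosmetic point is that for $n=0,1$ the statement does not introduce a new threshold $s_1$, but since $R_n(s)\le (C+1)e^{-\nu s}\le \tfrac{C+1}{s}\widetilde{I}(s)$ always, this is absorbed into the $\tfrac{C}{\sqrt{s}}\widetilde{I}(s)$ term after enlarging $C$.
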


\begin{proof}
One may see from the definition of $\overset{\sim}{I}$ that it is easy obtain \eqref{dyn I & l}. For \eqref{dyn I & l 0&1} and \eqref{dyn I & l n>1}. We just need to notice that $(se^{-\nu s})'=(-\nu+\frac{1}{s})se^{-\nu s}$.
\end{proof}

Let us now give the proof of Lemma \ref{lemma:control of L_n with I nouveau}.
\begin{proof}[Proof of Lemma \ref{lemma:control of L_n with I nouveau}]
From \eqref{eq3: g}, we have, for $n\in \N, \beta\in \N^N$ with $|\beta|=n$,
\begin{equation}\label{eq: proof lemma}
g'_\beta(s)=\left(1-\frac{n}{2}\right)g_\beta(s)+\int\alpha_1(y,s)g(y,s)k_\beta(y)\rho(y) dy+\int\alpha_2(y,s)k_\beta(y)\rho(y) dy,
\end{equation}
for all $s\geq s_1$. Using Cauchy-Schwarz inequality and Lemma \ref{lemma:estimation alpha_1/2}, we get, for all $s\geq s_1$,
$$\left|g'_{\beta}(s)-\left(1-\frac{n}{2}\right)g_{\beta}(s)\right|
\leq C_\beta\left(\frac{1}{s}I(s)+e^{-\nu s}\right).$$
Using the definition of $l_n$ in \eqref{decomposition}, we obtain \eqref{eq ln I}.

For the second inequality, using Lemma \ref{lemma:estimation alpha_1/2} and Cauchy-Schwarz inequality, we have the following straightforward computations:
\begin{equation*}
\begin{aligned}
I(s)I'(s)&=(\mathcal{L}g(s)|g(s))_{L^2_\rho}+(\alpha_1(s)g(s)|g(s))_{L^2_\rho}+(\alpha_2(s)|g(s))_{L^2_\rho}\\
&\leq \left(1-\frac{n+1}{2}\right)I(s)^2+\left(\left(\mathcal{L}-1+\frac{n+1}{2}\right)g(s)|g(s)\right)_{L^2_\rho}+\frac{C}{\sqrt{s}}I(s)^2+Ce^{-\nu s}\|\rho\|_{L^1}^{\frac{1}{2}}I(s)\\
&\leq \left(1-\frac{n+1}{2}\right)I(s)^2+\underset{k\geq 0}{\sum}\left(\frac{n+1-k}{2}P_kg(s)|P_kg(s)\right)_{L^2_\rho}+\frac{C}{\sqrt{s}}I(s)^2+Ce^{-\nu s}I(s)\\
&\leq \left(1-\frac{n+1}{2}\right)I(s)^2+\underset{k\geq 0}{\sum}\frac{n+1-k}{2}l_k^2+\frac{C}{\sqrt{s}}I(s)^2+Ce^{-\nu s}I(s)\\
&\leq \left(1-\frac{n+1}{2}\right)I(s)^2+I(s)\underset{k\geq 0}{\sum}\frac{n+1-k}{2}l_k+\frac{C}{\sqrt{s}}I(s)^2+Ce^{-\nu s}I(s).
\end{aligned}
\end{equation*}
Therefore,
$$I(s)I'(s)\leq\left(1-\frac{n+1}{2}+\frac{C}{\sqrt{s}}\right)I(s)^2+Ce^{-\nu s}I(s)+I(s)\underset{k=0}{\overset{n}{\sum}}\frac{n+1-k}{2}l_{k}(s),$$
which gives us the second inequality. 
\begin{remark}
One may see from \eqref{decomposition} that $I$ is not differentiable if $I(s)=0$ for some $s\geq s_1$. We still need to justify the inequality \eqref{eq I} for this case. One may also see in this case that $I$ has a local minimum in $s$. Implying $I'(s)=0$, which gives us the inequality \eqref{eq I}.
\end{remark}

This concludes the proof of Lemma \ref{lemma:control of L_n with I nouveau}.
\end{proof}

\textbf{Step 2 (On the dominating component): }
In this step, we prove the following Proposition:
\begin{prp}[On the existence of a dominating component]\label{prp:switch}
$ $
\begin{enumerate}[label=(\roman*)]
\item For $i\in \{0,1\},\ l_i=O\left(\frac{\overset{\sim}{I}(s)}{s}\right)$.\label{l0,l1}

\item Introducing $n_0\in \N$ such that 
\begin{equation}\label{def n0}
n_0:=\max\{n\in \N, \frac{n+1}{2}-1\leq\nu\},
\end{equation}
where $\nu$ is given in \eqref{estimation dim 1}, we have
\begin{itemize}
\item either there exists $n\leq n_0,\ n\notin \{0,1\}$ so that $\overset{\sim}{I}(s)\sim l_n(s)$,
\begin{equation}\label{inequality m & n}
\forall m\neq n,\ l_m(s)=O\left(\frac{l_n(s)}{s}\right),
\end{equation}
and
\begin{equation}\label{inequality 2}
\forall s\geq \sigma,\ \frac{s^{-C}}{C'}\exp\left(\left(1-\frac{n}{2}\right)s\right)\leq \overset{\sim}{I}(s)\leq C's^C\exp\left(\left(1-\frac{n}{2}\right)s\right),
\end{equation}
for some $\sigma,C,C'>0$.

\item or there exist $\sigma, C>0$ such that
\begin{equation}\label{inequality 1}
\forall s\geq \sigma,\ \overset{\sim}{I}(s)\leq C \exp(-\nu s+C\sqrt{s}).
\end{equation}

\end{itemize}
\end{enumerate}
\end{prp}

Before embarking in the proof, we will first need the following Lemma:

\begin{lemma}\label{lemma:x,y}
Let be $s_*\in \R$. If for all $s\geq s_*$, $x(s)$ and $y(s)$ satisfy
\begin{equation}\label{ineq x,y}
0\leq x(s)\leq y(s) \text{ and } y(s)\underset{s\mapsto +\infty}{\longrightarrow} 0,
\end{equation}

with 
\begin{equation}\label{ineq x',y'}
x'(s)\geq -\frac{C}{s}y(s)\text{ and } y'(s)\leq -\frac{1}{2}y(s)+\frac{C}{\sqrt{s}}y(s)+\frac{1}{2}x(s),
\end{equation}
then either $x\sim y$ as $s\rightarrow +\infty$ or there exists a constant $C$ such that
$$\forall s\geq s_*,\ x(s)\leq \frac{C}{s}y(s).$$
\end{lemma}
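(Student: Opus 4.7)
The plan is to study the ratio $r(s) := x(s)/y(s) \in [0,1]$ (whenever $y(s)>0$, which is the generic situation since $y$ is assumed positive until it tends to zero) and reduce the dichotomy to a logistic-type differential inequality. From $r'(s)= x'/y - r\,(y'/y)$, the hypotheses of the lemma straightforwardly yield, after absorbing lower-order terms using $0\leq r \leq 1$,
\begin{equation*}
r'(s) \;\geq\; \frac{r(s)\bigl(1-r(s)\bigr)}{2} \;-\; \frac{C'}{\sqrt{s}}.
\end{equation*}

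Let $L := \liminf_{s\to+\infty} r(s)$. If $L=1$, then since $r \leq 1$ one has $r(s) \to 1$, i.e.\ the first alternative $x \sim y$ holds. Otherwise $L<1$, and the goal is to upgrade this to $r(s) \leq 1-\epsilon$ on a whole tail. The key claim is: for every $M \in (L,1)$, one has $r(s) \leq M$ for $s$ large enough. I would argue by contradiction: if $r$ exceeds $M$ at arbitrarily large times, then by continuity combined with $\liminf r = L < M$, there exists a sequence of \emph{downward} crossings $c_n\to+\infty$ with $r(c_n)=M$ and $r'(c_n) \leq 0$. But the logistic lower bound evaluated at such $c_n$ gives $r'(c_n) \geq M(1-M)/2 - C'/\sqrt{c_n}$, which is strictly positive for $c_n$ large enough, a contradiction. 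Hence $r-M$ has constant sign on a tail; the positive sign is excluded by $\liminf r = L < M$, so $r \leq M < 1$ eventually.

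With $r(s) \leq 1-\epsilon$ on $[s_1,+\infty)$ for some $\epsilon>0$, the hypothesis on $y'$ reduces to $y'(s) \leq y(s)\bigl(-\epsilon/2 + C/\sqrt{s}\bigr) \leq -(\epsilon/4)\,y(s)$ for $s$ large, so $y$ decays exponentially: $y(t) \leq y(s)\,e^{-\epsilon(t-s)/4}$ for $t\geq s$. Since $0 \leq x \leq y \to 0$, one has $x(s)\to 0$, so integrating the lower bound $x'(t) \geq -Cy(t)/t$ from $s$ to $+\infty$ and using the exponential decay of $y$ gives
\begin{equation*}
x(s) \;=\; -\!\int_s^{+\infty}\! x'(t)\,dt \;\leq\; C\!\int_s^{+\infty}\!\frac{y(t)}{t}\,dt \;\leq\; \frac{C\,y(s)}{s}\!\int_0^{+\infty}\! e^{-\epsilon u/4}\,du \;\leq\; \frac{C''\,y(s)}{s},
\end{equation*}
which is the second alternative. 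The initial compact interval $[s_*,s_1]$ is dealt with using the trivial bound $x \leq y$ by enlarging the constant.

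The main obstacle is the dichotomy step itself: the hypotheses furnish only a \emph{lower} bound on $r'$, so $r$ is a priori free to drop arbitrarily quickly, and oscillation between values below $1$ and values near $1$ is not immediately forbidden. The saving observation is that the logistic lower bound excludes downward crossings through any level strictly inside $(0,1)$ at large times, even though it gives no information on upward motion. This asymmetry is exactly what rules out the troublesome scenario $\limsup r = 1$ with $\liminf r < 1$.
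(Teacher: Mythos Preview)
Your argument is correct and the dichotomy step---ruling out downward crossings of the ratio $r=x/y$ through any level in $(0,1)$ at large times---is exactly the crossing argument the paper uses (there phrased directly as $x(\sigma_2)=(1-\varepsilon)y(\sigma_2)$ with $x'(\sigma_2)\leq (1-\varepsilon)y'(\sigma_2)$ leading to a contradiction).

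The one genuine difference is the endgame once $r\leq 1-\varepsilon$ is established on a tail. The paper stays with the ratio $z=x/y$, derives $z'\geq \tfrac{\varepsilon}{2}z-\tfrac{C}{\sqrt{s}}z-\tfrac{C}{s}$, multiplies by the integrating factor $e^{2C\sqrt{s}-\varepsilon s/2}$, integrates from $s$ to $+\infty$, and then estimates the resulting integral $\int_s^\infty \sigma^{-1}e^{2C\sqrt{\sigma}-\varepsilon\sigma/2}\,d\sigma$ by an integration by parts to extract the factor $1/s$. You instead go back to $x$ and $y$ separately: from $x\leq (1-\varepsilon)y$ the hypothesis on $y'$ gives exponential decay $y(t)\leq y(s)e^{-\varepsilon(t-s)/4}$, and then integrating the lower bound $x'\geq -Cy/t$ from $s$ to $+\infty$ immediately yields $x(s)\leq \tfrac{C}{s}\int_s^\infty y(t)\,dt\leq \tfrac{C''}{s}y(s)$. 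Your route avoids the integrating factor and the auxiliary integral estimate entirely, at the price of using the exponential decay of $y$ as an intermediate step; it is shorter and more transparent.
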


\begin{proof}
Either $x\sim y$ as $s\rightarrow +\infty$ or 
\begin{equation}
\exists \varepsilon>0\text{ and }(s_n)_n\subset \R, s_n\rightarrow +\infty\text{ as }n\rightarrow +\infty \text{ s.t } x(s_n)<(1-\varepsilon) y(s_n).
\end{equation}
Our first claim is 
\begin{equation}\label{sequence}
\exists n_0\in \N, \forall s>s_{n_0}, x(s)<(1-\varepsilon)y(s).
\end{equation}
If our claim is false, then we can find $\sigma_1$ and $\sigma_2$ as large as we want such that 
$$x(\sigma_2)=(1-\varepsilon)y(\sigma_2) \text{ and } \forall \sigma\in [\sigma_1,\sigma_2[,x(\sigma)>(1-\varepsilon)y(\sigma).$$
Then, necessarily $x'(\sigma_2)\leq (1-\varepsilon)y'(\sigma_2).$ But 
\begin{equation}\label{ineq x' y'}
\begin{split}
(x'-(1-\varepsilon)y')(\sigma_2)&\geq -\frac{C}{\sigma_2}y(\sigma_2)+(1-\varepsilon)\left(\frac{1}{2}y(\sigma_2)-\frac{C}{\sqrt{\sigma_2}}y(\sigma_2)-\frac{1}{2}x(\sigma_2)\right)\\
&\geq -\frac{C}{\sigma_2}y(\sigma_2)+(1-\varepsilon)\left(\frac{1}{2}y(\sigma_2)-\frac{C}{\sqrt{\sigma_2}}y(\sigma_2)-\frac{1}{2}(1-\varepsilon)y(\sigma_2)\right)\\
&\geq \frac{\varepsilon}{2}(1-\varepsilon) y(\sigma_2)-\frac{C}{\sigma_2}y(\sigma_2)-(1-\varepsilon)\frac{C}{\sqrt{\sigma_2}}y(\sigma_2).
\end{split}
\end{equation}
We know that $y(\sigma_2)\neq 0$ because if our statement was false, then, from \eqref{ineq x,y} and \eqref{ineq x',y'}, we have
$$y(\sigma_2)=x(\sigma_2)=0,\ x'(\sigma_2)\geq 0,\ y(\sigma_2)\leq 0,$$
which implies, for $\sigma_2$ large enough, that
$$x(s)\geq y(s),\ \forall s\geq \sigma_2.$$
Together with \eqref{ineq x',y'}, we have $y'(s)\leq \frac{C}{\sqrt{s}}y(s)\leq C'y(s)$, for $\sigma_2$ large enough. Therefore, we get 
$$y(s)\leq y(\sigma_2)e^{C'(s-\sigma_2)}=0.$$
Thus,
$$y(s)= 0 \text{ and } x(s)=0,\ \forall s\geq \sigma_2,$$
which contradicts \eqref{sequence}.	Therefore, from \eqref{ineq x' y'}, if $\sigma_2$ is large enough, then $x'(\sigma_2)>(1-\varepsilon)y'(\sigma_2)$. Hence, a contradiction and our claim is true.\\
From the claim, we can define $z(s)=\frac{x(s)}{y(s)}<1-\varepsilon$, for $s>s_{n_0}.$ Therefore, from the inequalities \eqref{ineq x',y'}, we have
$$z'(s)\geq -\frac{C}{s}+\frac{1}{2}z(s)-\frac{C}{\sqrt{s}}z(s)-\frac{1}{2}z(s)^2\geq -\frac{C}{s}-\frac{C}{\sqrt{s}}z(s)+\frac{\varepsilon}{2}z(s).$$
Hence, $\frac{d}{ds}(e^{2C\sqrt{s}-\frac{\varepsilon}{2}s}z(s))\geq -\frac{C}{s}e^{2C\sqrt{s}-\frac{\varepsilon}{2}s}$. This gives
$$z(s)\leq Ce^{-2C\sqrt{s}+\frac{\varepsilon}{2}s}\int_s^{+\infty}e^{2C\sqrt{\sigma}-\frac{\varepsilon}{2}\sigma}\sigma^{-1} d\sigma\leq Ce^{-2C\sqrt{s}+\frac{\varepsilon}{2}s}s^{-1}\int_s^{+\infty}e^{2C\sqrt{\sigma}-\frac{\varepsilon}{2}\sigma} d\sigma.$$
Since, we have
\begin{equation}
\begin{split}
\gamma(s)&:=\int_s^{+\infty}e^{2C\sqrt{\sigma}-\frac{\varepsilon}{2}\sigma} d\sigma\\
&=\frac{2}{\varepsilon}e^{2C\sqrt{s}-\frac{\varepsilon}{2}s} +\frac{2C}{\varepsilon}\int_s^{+\infty}\sigma^{-\frac{1}{2}}e^{2C\sqrt{\sigma}-\frac{\varepsilon}{2}\sigma} d\sigma\\
&\leq \frac{2}{\varepsilon}e^{2C\sqrt{s}-\frac{\varepsilon}{2}s} +\frac{2C}{\varepsilon}s^{-\frac{1}{2}}\int_s^{+\infty}e^{2C\sqrt{\sigma}-\frac{\varepsilon}{2}\sigma} d\sigma\\
&\leq \frac{2}{\varepsilon}e^{2C\sqrt{s}-\frac{\varepsilon}{2}s} +\frac{2C}{\varepsilon}s^{-\frac{1}{2}}\gamma(s).
\end{split}
\end{equation}
Therefore, there exists $C'>0$ and $s_{**}\geq s_*$ such that, for all $s>s_{**}$, $\gamma(s) \leq C'e^{2C\sqrt{s}-\frac{\varepsilon}{2}s}$. This yields $z(s)=O(\frac{1}{s})$ as $s\rightarrow +\infty$, which closes the proof.
\end{proof}

Now, we can start the proof of Proposition \ref{prp:switch}.
\begin{proof}[Proof of Proposition \ref{prp:switch}]

\begin{enumerate}[label=(\roman*)]
\item We will only prove the estimate for $l_1$ since the proof for $l_0$ is  similar. Using Corollary \ref{cor: dyn I and l tilde} and \eqref{eq:rough}, we see that on $x(s)=l_1(s)e^{-s/2}$ and $y(s)=\overset{\sim}{I}(s)e^{-s/2}$ verify the hypothesis of Lemma \ref{lemma:x,y}. Obtaining two cases:

\textbf{Case 1:} $\bm{l_1(s)\sim}\overset{\sim}{\bm{I}}\bm{(s).}$ Then, \eqref{dyn I & l} gives $l_1\equiv0$, for large time (hence, $l_1(s)=O\left(\frac{\overset{\sim}{I}(s)}{s}\right)$), or $l_1(s)\geq Cs^{-C}e^{\frac{s}{2}}$, which is not possible because \eqref{eq:rough} gives us $l_1(s)\rightarrow 0$ when $s\rightarrow +\infty$.

\textbf{Case 2:} $\bm{l_1(s)=O\left(\frac{\overset{\sim}{I}(s)}{s}\right).}$ Therefore, we are left only with this scenario, concluding the proof of \ref{l0,l1}.
\item Let's make the following assumption:
\begin{equation}\label{control l_n by I}
\forall n\in \N,\ l_n(s)=O\left(\frac{\overset{\sim}{I}(s)}{s}\right).
\end{equation}
We consider two cases in the following:

\textbf{Case 1: \eqref{control l_n by I} is true.} Then, using \eqref{dyn I & l n>1}, we get, for any $n\leq n_0$, that there exists a constant $C_n\in \R$ such that, for $s$ large enough,
$$\overset{\sim}{I'}(s)\leq \left(1-\frac{n+1}{2}+\frac{C_n}{\sqrt{s}}\right)\overset{\sim}{I}(s).$$
By solving the differential inequality, we obtain, for a certain $\sigma_n,C_n>0$,
\begin{equation*}
\forall s\geq \sigma_n,\ \overset{\sim}{I}(s)\leq C \exp\left(\left(\frac{1-n}{2}\right)s+C_n\sqrt{s}\right),
\end{equation*}
which is valid for all $n\leq n_0$. Thus,
\begin{equation}\label{inequality n_0}
\forall s\geq \sigma_{n_0},\ \overset{\sim}{I}(s)\leq C \exp\left(\left(\frac{1-n_0}{2}\right)s+C\sqrt{s}\right).
\end{equation}
For $n>n_0$, we have
$$\overset{\sim}{I'}(s)\leq \left(-\nu+\frac{C_n}{\sqrt{s}}\right)\overset{\sim}{I}(s).$$
We obtain, for a certain $\sigma_1,C>0$,
\begin{equation*}
\forall s\geq \sigma_1=\max(\sigma_{n_0},\sigma_1),\ \overset{\sim}{I}(s)\leq C \exp(-\nu s+C\sqrt{s}).
\end{equation*}

Together with \ref{inequality n_0}, we obtain
\begin{equation*}
\forall s\geq \sigma,\ \overset{\sim}{I}(s)\leq C \exp\left(-\max\left(\nu,\frac{n_0-1}{2}\right)s+C\sqrt{s}\right).
\end{equation*}
Since $\frac{n_0-1}{2}\leq \nu$, we may conclude \eqref{inequality 1}.

\textbf{Case 2: \eqref{control l_n by I} is false.} Then, we take the smallest $n\in \N$ such that \eqref{control l_n by I} does not hold. We know from $(i)$ that $n\geq 2$.

\begin{itemize}
\item \textbf{If} $\mathbf{n\leq n_0}$, we have
\begin{equation}\label{control k<n}
\forall k\in \{0,...,n-1\}, l_k=O\left(\frac{\overset{\sim}{I}(s)}{s}\right).
\end{equation}
We can see that $x(s)=\exp((\frac{n}{2}-1)s)l_n(s)$ and $y(s)=\exp((\frac{n}{2}-1)s)I(s)$ satisfy the assumptions of Lemma \ref{lemma:x,y}. Therefore, we have either $l_n(s)\sim \overset{\sim}{I}(s)$ or there exists $C>0$ so that $l_n(s)\le \frac{C}{s}\overset{\sim}{I}(s)$ which is not possible. Thus, $l_n(s)\sim \overset{\sim}{I}(s)$. Using again \eqref{dyn I & l}, we see that
\begin{equation}\label{edo l_n}
\left|l'_n(s)+\left(\frac{n}{2}-1\right)l_n(s)\right|\leq \frac{C_n}{s}l_n(s),
\end{equation}
which gives us \eqref{inequality 2}.

Let us prove \eqref{inequality m & n}. We already obtained it for $m<n$ from \eqref{control k<n}. For $m>n$, using the fact that $\overset{\sim}{I}(s)\sim l_n(s)$ on \eqref{dyn I & l}
and multiplying by $\exp(-(\frac{m}{2}-1)(s-s_3))$ for $s_3\geq s_2$, we get
\begin{equation}\label{control expression l_m}
l_m(s)\leq e^{-(\frac{m}{2}-1)(s-s_0)}l_m(s_0)+C(m,n)\int_{s_3}^s\frac{l_n(t)}{t}e^{-(\frac{m}{2}-1)(s-t)} dt.
\end{equation}

Since $m>n\geq 2$, then  we have, from \eqref{inequality 2}, that
\begin{equation}\label{inequality exp}
e^{-(\frac{m}{2}-1)(s-s_3)}l_m(s_3)\leq O\left(\frac{l_n(s)}{s}\right).
\end{equation}
For the integral part of the right-hand, by integration by part, we have
\begin{equation}\label{ipp}
\begin{aligned}
\int_{s_3}^s&\frac{l_n(t)}{t}e^{-(\frac{m}{2}-1)(s-t)} d t\\
&\leq   \frac{2}{m-2}\left(\frac{l_n(s)}{s} -e^{-(\frac{m}{2}-1)(s-s_3)}\frac{l_n(s_3)}{s_3}-\int^s_{s_3}e^{-(\frac{m}{2}-1)(s-t)}\left(\frac{l_n'(t)}{t}-\frac{l_n(t)}{t^2}\right)d t\right).
\end{aligned}
\end{equation}

One may see, from \eqref{dyn I & l}, that 
\begin{equation}
\begin{split}
\left|\int^s_{s_3}e^{-(\frac{m}{2}-1)(s-t)}\frac{l'_n(t)}{t}d t+\left(\frac{n}{2}-1\right)\int^s_{s_3}e^{-(\frac{m}{2}-1)(s-t)}\frac{l_n(t)}{t}d t\right|&\\
\leq \int^s_{s_3}&e^{-(\frac{m}{2}-1)(s-t)}\frac{l_n(t)}{t^2}d t\\
\leq \frac{1}{s_3}&\int^s_{s_3}e^{-(\frac{m}{2}-1)(s-t)}\frac{l_n(t)}{t}d t.
\end{split}
\end{equation}

Then, \eqref{ipp} becomes
\begin{equation}
\begin{aligned}
\left|\int_{s_3}^s\frac{l_n(t)}{t}e^{-(\frac{m}{2}-1)(s-t)} d t-\frac{n-2}{m-2}\int_{s_3}^s\frac{l_n(t)}{t}e^{-(\frac{m}{2}-1)(s-t)} d t\right|&\\
\leq   C(m,n)\frac{l_n(s)}{s} +\frac{C(m,n)}{s_3}&\int^s_{s_3}e^{-(\frac{m}{2}-1)(s-t)}\frac{l_n(t)}{t}d t.
\end{aligned}
\end{equation}

Therefore, for $s_3$ large enough,
$$\int_{s_3}^s\frac{l_n(t)}{t}e^{-(\frac{m}{2}-1)(s-t)} d t\leq C(m,n)\frac{l_n(s)}{s}.$$

Together with \eqref{inequality exp}. \eqref{control expression l_m} becomes

$$l_m(s)=O\left(\frac{l_n(s)}{s}\right).$$

\item \textbf{If} $\mathbf{n>n_0}$, then 
\begin{equation}
\forall k\in \{0,...,n_0\}, l_k=O\left(\frac{\overset{\sim}{I}(s)}{s}\right).
\end{equation}
Therefore, from \eqref{dyn I & l n>1}, we obtain
\begin{equation}
\overset{\sim}{I'}(s)\leq -\nu\overset{\sim}{I}(s)+\frac{C}{\sqrt{s}}\overset{\sim}{I}(s).
\end{equation}
Thus, \eqref{inequality 1} follows. Which concludes the proof of Proposition \ref{prp:switch}.
\end{itemize}
\end{enumerate}
\end{proof}

\subsection{Further refinement related to the dominating component}
Following Proposition \ref{prp:switch}, we further investigate the cases where $I \sim l_n$, with  $n=2$ and $n=3$.
\begin{prp}[On the dominating component]\label{prp:l_2,l_3}
$ $
\begin{enumerate}[label=(\roman*)]
\item If  $2\leq n_0$ and $\overset{\sim}{I}(s)\sim l_2(s)$, then $l'_2(s)=-\frac{2}{s}l_2(s)+O\left(\frac{l_2(s)}{s^{3/2}}\right)$ and there exists $C_2>0$ such that $\ l_2(s)=\frac{C_2}{s^2}+O\left(\frac{1}{s^{5/2}}\right)$.\label{item 1}
\item If $3\leq n_0$ and $\overset{\sim}{I}(s)\sim l_3(s)$, then $l_3'(s)=-\left(\frac{1}{2}+\frac{3}{s}\right)l_3(s)+O\left(\frac{l_3(s)}{s^{3/2}}\right)$ and there exists $C_3>0$ such that $\ l_3(s)=\frac{C_3}{s^3}e^{-s/2}+O\left(\frac{e^{-s/2}}{s^{7/2}}\right)$.\label{item 2}
\end{enumerate}
\end{prp}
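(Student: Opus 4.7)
The plan is to refine the ODE \eqref{dyn I & l} for $l_n(s)$ when it dominates, by projecting \eqref{eq3: g} onto the eigenspace of $\mathcal{L}$ for the eigenvalue $1-n/2$ and using the sharp expansion \eqref{estimation alpha_1 3} of $\alpha_1$. Starting from the identity
\[
\tfrac{1}{2}(l_n^2)'(s) = \bigl(1 - \tfrac{n}{2}\bigr)\, l_n(s)^2 + \langle \alpha_1 g,\, P_n g\rangle_{L^2_\rho} + \langle \alpha_2,\, P_n g\rangle_{L^2_\rho},
\]
I would write $\alpha_1 = \frac{\Pi}{2s} + R$, with $\Pi(y) := N - \tfrac{|y|^2}{2}$ and $|R(y,s)| \leq C(1+|y|^4)/s^{3/2}$ from \eqref{estimation alpha_1 3}, and isolate the leading contribution $\tfrac{1}{2s}\langle \Pi P_n g, P_n g\rangle_{L^2_\rho}$.

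The algebraic heart of the argument is the identity $P_n(\Pi \cdot P_n g) = \mu_n\, P_n g$ on the $n$-th eigenspace of $\mathcal{L}$, with $\mu_2 = -4$ and $\mu_3 = -6$. I would verify this by direct computation on each basis element produced by the Hermite polynomials \eqref{defhm}: for $n=2$, on $h_2(y_i)$ and $y_i y_j$; for $n=3$, on $h_3(y_i)$, $h_2(y_i)h_1(y_j)$ and $y_i y_j y_k$. On each basis element only a few terms of $\Pi \cdot P_n g$ land back in the $n$-th eigenspace (the rest lie in eigenspaces of degree $n\pm 2$), and the combinatorics produces the same multiplier on every basis element.

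The main technical obstacle is controlling the cross term $\tfrac{1}{2s}\langle \Pi(g - P_n g),\, P_n g\rangle_{L^2_\rho}$ at size $O(l_n^2/s^2)$, which demands the refinement $\|g - P_n g\|_{L^2_\rho} = O(l_n/s)$ of \eqref{inequality m & n}. The modes $l_0,\, l_1$ are already $O(l_n/s)$ by Proposition~\ref{prp:switch}(i); for $l_m$ with $m > n$, I would revisit the integration-by-parts computation in the proof of Proposition~\ref{prp:switch}(ii) and verify that the constants $C(m,n)$ in $l_m \leq C(m,n)\, l_n/s$ decay like $1/(m-n)$ (from the denominator $(m-n)/2$ that emerges after IBP), so that $\sum_{m>n} C(m,n)^2 < \infty$ and hence $\sum_{m>n} l_m^2 = O(l_n^2/s^2)$. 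The remaining terms $\langle Rg, P_n g\rangle$ and $\langle \alpha_2, P_n g\rangle$ are controlled by \eqref{estimation alpha_1 3} and \eqref{estimation alpha_2} respectively; the $\alpha_2$ contribution is absorbed because $l_n$ admits a polynomial lower bound in case (i) via \eqref{inequality 2}, while in case (ii) the hypothesis $n_0 \geq 3$ forces $\nu \geq 1 > 1/2$, so that $e^{-\nu s}$ is negligible compared with $l_n \sim e^{-s/2}/s^3$.

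Assembling these estimates yields
\[
(l_n^2)'(s) = \Bigl(2 - n + \tfrac{\mu_n}{s}\Bigr) l_n(s)^2 + O\!\bigl(l_n(s)^2/s^{3/2}\bigr),
\]
giving $l_2'(s) = -\tfrac{2}{s}\, l_2(s) + O(l_2(s)/s^{3/2})$ and $l_3'(s) = -\bigl(\tfrac{1}{2} + \tfrac{3}{s}\bigr) l_3(s) + O(l_3(s)/s^{3/2})$, which are the two ODEs stated in the proposition. To conclude I would integrate the logarithmic derivative: in (i), $(\log l_2)'(s) = -2/s + O(s^{-3/2})$ is an integrable perturbation at infinity, so $\log l_2(s) = -2\log s + \log C_2 + O(s^{-1/2})$ for some $C_2 > 0$ (positivity being automatic since $l_n(s)>0$ for $s$ large by \eqref{inequality 2}), whence $l_2(s) = C_2/s^2 + O(s^{-5/2})$; case (ii) is analogous, with $\log l_3(s) = -s/2 - 3\log s + \log C_3 + O(s^{-1/2})$ yielding $l_3(s) = C_3 e^{-s/2}/s^3 + O(e^{-s/2}/s^{7/2})$ with $C_3 > 0$.
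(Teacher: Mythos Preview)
Your overall strategy coincides with the paper's: project \eqref{eq3: g} onto the $n$-th eigenspace, split $\alpha_1=\frac{\Pi}{2s}+R$ with $\Pi=N-\tfrac{|y|^2}{2}$ using \eqref{estimation alpha_1 3}, compute the action of $\Pi$ on the $n$-th Hermite eigenspace (the paper defers this to \cite[p.~1200]{KZ2000}, and your values $\mu_2=-4$, $\mu_3=-6$ are correct), and absorb the $\alpha_2$ term via the lower bound \eqref{inequality 2} together with $\nu\ge 1$ when $n_0\ge 3$. The integration of the resulting ODEs is identical to the paper's.

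There is, however, a genuine gap in your treatment of the cross term. Your claim that the constants $C(m,n)$ from Proposition~\ref{prp:switch}(ii) decay like $1/(m-n)$, so that $\sum_{m>n}C(m,n)^2<\infty$, is not correct: the constants $C_m$ in \eqref{dyn I & l} come from $\|(1+|y|^2)P_m g\|_{L^2_\rho}$ and grow linearly in $m$ (multiplication by $|y|^2$ on the $m$-th Hermite eigenspace raises the norm by a factor $\sim m$), so after the IBP you get $C(m,n)\sim m/(m-n)$, which does \emph{not} go to zero, and the summation $\sum_{m\neq n} l_m^2=O(l_n^2/s^2)$ cannot be obtained this way.

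The fix is already implicit in an observation you made but did not exploit. Since $\Pi$ is a degree-two polynomial, $\Pi\,P_n g$ lies in $E_{n-2}\oplus E_n\oplus E_{n+2}$; hence, by self-adjointness,
\[
\langle \Pi(g-P_n g),\,P_n g\rangle_{L^2_\rho}
=\langle g-P_n g,\,\Pi P_n g\rangle_{L^2_\rho}
=\langle P_{n-2}g,\,P_{n-2}(\Pi P_n g)\rangle+\langle P_{n+2}g,\,P_{n+2}(\Pi P_n g)\rangle,
\]
and only the two modes $l_{n-2}$ and $l_{n+2}$ enter, each $O(l_n/s)$ by \eqref{inequality m & n}. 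This immediately gives the required $O(l_n^2/s^2)$ bound with no summation over~$m$. This is precisely what the paper does (working coefficient-by-coefficient with $\langle g,\Pi k_\beta\rangle$, where $\Pi k_\beta$ has degree $n+2$, so only finitely many modes of $g$ are seen). With this correction your argument goes through and matches the paper's.
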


\begin{proof}
$ $
\begin{enumerate}[label=(\roman*)]
\item We assume that $\overset{\sim}{I}(s)\sim l_2(s)$. Because of \eqref{eq: proof lemma} for $|\beta|=2$, we just need to study
$$M_1(s)=\int \alpha_1(y,s)g(y,s)k_\beta(y)\rho(y)d y,$$
$$M_2(s)=\int \alpha_2(y,s)k_\beta(y)\rho(y)d y.$$
We write
\begin{equation*}
M_1(s)=\int \frac{1}{2s}\left(N-\frac{|y|^2}{2}\right)g(y,s)k_\beta(y)\rho(y)dy+\int \gamma(y,s) g(y,s)k_\beta(y)\rho(y)d y,
\end{equation*}
with $\gamma(y,s)=\alpha_1( y,s)- \frac{1}{2s}(N-\frac{|y|^2}{2}).$ Using \eqref{estimation alpha_1 3}, we have that
\begin{equation*}
\left|\int \gamma(y,s) g(y,s)k_\beta(y)\rho(y)d y\right| \leq \frac{C}{s^{3/2}}I(s)\|(1+|Id|^4)k_\beta\|_{L^2_\rho}.
\end{equation*}
Since $l_2(s)\sim \overset{\sim}{I}(s)=I(s)+se^{-\nu s}\geq I(s)$, then 
\begin{equation}\label{M1 part 2}
\left|\int \gamma(y,s) g(y,s)k_\beta(y)\rho(y)d y\right|m=O\left( \frac{l_2(s)}{s^{3/2}}\right).
\end{equation}
With similar computations  as what has been done in \cite[p.1200]{KZ2000}, we have
$$\int \frac{1}{2s}\left(N-\frac{|y|^2}{2}\right)g(y,s)k_\beta(y)\rho(y)dy= -\frac{2}{s}g_\beta(s).$$
Thus,
$$M_1(s)=-\frac{2}{s}g_\beta(s)+O\left(\frac{l_2(s)}{s^{3/2}}\right).$$
For $M_2$, since $n_0\geq 2$, then from \eqref{estimation alpha_2} and \eqref{inequality 2} for $n=2$, we obtain
\begin{equation}\label{M2}
M_2(s)\leq \int |\alpha_2(y,s)|k_\beta(y)\rho(y)d y\leq Ce^{-\nu s}\leq  Cs^C e^{-\nu s} l_2(s).
\end{equation}
Together with \eqref{eq: proof lemma}, we obtain 
$$l'_2(s)=-\frac{2}{s}l_2(s)+O\left(\frac{l_2(s)}{s^{3/2}}\right).$$
Therefore,
\begin{equation}
\begin{split}
l_2(s)&=l_2(s_0)\left(\frac{s_0}{s}\right)^2exp\left(\int^s_{s_0}\frac{\phi(\sigma)}{\sigma^{3/2}} d\sigma\right)\\
&=l_2(s_0)\left(\frac{s_0}{s}\right)^2exp\left(\left(\int^{+\infty}_{s_0}-\int^{+\infty}_{s}\right)\frac{\phi(\sigma)}{\sigma^{3/2}} d\sigma\right),\\
\end{split}
\end{equation}
where $\phi$ is bounded. Since $\int^{+\infty}_{s}\frac{\phi(\sigma)}{\sigma^{3/2}} d\sigma=O(\frac{1}{\sqrt{s}})$. Therefore, using a Taylor expansion, we obtain
$$l_2(s)=l_2(s_0)\left(\frac{s_0}{s}\right)^2exp\left(\int^{+\infty}_{s_0}\frac{\phi(\sigma)}{\sigma^{3/2}} d\sigma\right)+O\left(\frac{1}{s^{5/2}}\right).$$
From \eqref{inequality 2}, there exists $C_1>0$ such that \ref{item 1} holds.

\item Now, we assume that $\overset{\sim}{I}(s)\sim l_3(s)$. We know again from \cite[p.1200]{KZ2000} that for $|\beta|=3$, we have
$$\int \frac{1}{2s}\left(N-\frac{|y|^2}{2}\right)g(y,s)k_\beta(y)\rho(y)dy= -\frac{3}{s}g_\beta(s).$$
Together with \eqref{M1 part 2}, we get
$$M_1(s)=-\frac{3}{s}g_\beta(s)+o\left(\frac{l_3(s)}{s^{3/2}}\right).$$
For $M_2$, using again Cauchy-Schwarz and \eqref{inequality 2}, we obtain 
\begin{equation}
M_2(s)\leq Ce^{-\nu s}\leq Cs^C e^{-(\nu-\frac{1}{2}) s} l_3(s).
\end{equation}
But since $3\leq n_0$, we have from \ref{def n0} that $\nu \geq 1$. Using \eqref{M1 part 2} and \eqref{eq: proof lemma}, we get
$$g_\beta'(s)=-\left(\frac{1}{2}+\frac{3}{s}\right)g_\beta(s)+O\left(\frac{l_3(s)}{s^{3/2}}\right).$$
Thus, 
$$l_3'(s)=-\left(\frac{1}{2}+\frac{3}{s}\right)l_3(s)+O\left(\frac{l_3(s)}{s^{3/2}}\right).$$
With the same arguments as for $l_2$, we obtain \ref{item 2}, closing the proof.
\end{enumerate}
\end{proof}

\subsection{Conclusion of the proof of Proposition \ref{prp:g in L^2_rho}}
We can now deduce directly Proposition \ref{estimation L^2_rho} from the following Corollary, which is an immediate consequence of Propositions \ref{prp:switch} and \ref{prp:l_2,l_3}.
\begin{cor}\label{corollary: switch dom comp}
Only three cases may occur:
\begin{enumerate}[label=(\roman*)]
\item $n_0\geq 2$ with $\overset{\sim}{I}(s)\sim l_2(s)$ and there exists a constant $C_2>0$ such that $l_2(s)=\frac{C_2}{s^2}+O\left(\frac{1}{s^{5/2}}\right)$.\label{item 1 conclusion}
\item $\overset{\sim}{I}(s)=O\left(\frac{e^{-s/2}}{s^3}\right)$.\label{item 2 conclusion}
\item $\overset{\sim}{I}(s)=O(e^{-\nu s+C\sqrt{s}})$, where $\nu$ is defined in \eqref{estimation dim 1}.\label{item 3 conclusion}
\end{enumerate}
\end{cor}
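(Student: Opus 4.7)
The plan is to derive the corollary as a direct bookkeeping consequence of the dichotomy in Proposition \ref{prp:switch} combined with the refined asymptotics in Proposition \ref{prp:l_2,l_3}. Proposition \ref{prp:switch}(ii) tells us that exactly one of two mutually exclusive scenarios occurs: either there exists an integer $n$ with $2 \leq n \leq n_0$ (by part (i), $n=0,1$ are ruled out as dominant indices) such that $\widetilde I(s) \sim l_n(s)$, together with the two-sided growth estimate \eqref{inequality 2}; or else $\widetilde I(s) \leq C\exp(-\nu s + C\sqrt{s})$, which is already conclusion \ref{item 3 conclusion}. Hence the work reduces to dispatching the first alternative according to the value of $n$.

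If $n=2$, then $n_0 \geq 2$ and $\widetilde I \sim l_2$, and Proposition \ref{prp:l_2,l_3}\ref{item 1} gives directly the expansion $l_2(s) = C_2 s^{-2} + O(s^{-5/2})$ with $C_2 > 0$, which is conclusion \ref{item 1 conclusion}. If $n=3$, Proposition \ref{prp:l_2,l_3}\ref{item 2} yields $l_3(s) = C_3 e^{-s/2}/s^3 + O(e^{-s/2}/s^{7/2})$, and since $\widetilde I(s) \sim l_3(s)$, this forces $\widetilde I(s) = O(e^{-s/2}/s^3)$, namely conclusion \ref{item 2 conclusion}.

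It remains to handle $n \geq 4$. Here I will not need the refined Proposition \ref{prp:l_2,l_3}, but merely the upper bound in \eqref{inequality 2}, which reads $\widetilde I(s) \leq C' s^C \exp((1-n/2)s)$. Since $1 - n/2 \leq -1$ for $n \geq 4$, we get $\widetilde I(s) \leq C' s^C e^{-s}$, and the factor $e^{-s/2}$ beats any polynomial: writing $s^C e^{-s} = (s^{C+3} e^{-s/2}) \cdot (e^{-s/2}/s^3)$ and bounding the first parenthesis by a constant for $s$ large, we obtain $\widetilde I(s) = O(e^{-s/2}/s^3)$, falling again into conclusion \ref{item 2 conclusion}. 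The proof is then complete; the only mildly delicate point is this last observation, where one must verify that the generic polynomial loss $s^C$ in \eqref{inequality 2} does not destroy the comparison with the $n=3$ rate $e^{-s/2}/s^3$, which is immediate because the exponential gap $e^{-s/2}$ dominates any power of $s$. I do not expect any real obstacle: all the analytic content has been concentrated in Propositions \ref{prp:switch} and \ref{prp:l_2,l_3}, and the corollary is essentially a case split assembling their conclusions.
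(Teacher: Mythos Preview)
Your proof is correct and follows the same approach as the paper's: invoke the dichotomy of Proposition~\ref{prp:switch}, then split the dominant-component alternative according to $n=2$, $n=3$, and $n\geq 4$, using Proposition~\ref{prp:l_2,l_3} for the first two and the bound \eqref{inequality 2} for the last. Your explicit justification that $s^C e^{-s} = O(e^{-s/2}/s^3)$ in the $n\geq 4$ case is slightly more detailed than the paper's one-line reference to \eqref{inequality 2}, but the argument is otherwise identical.
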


\begin{proof}
From Proposition \ref{prp:switch}, we have one of the following cases:
\begin{itemize}
\item either 
\begin{equation*}
\forall s\geq \sigma,\ \overset{\sim}{I}(s)\leq C \exp(-\nu s+C\sqrt{s}),
\end{equation*}
which gives us \ref{item 3 conclusion},

\item or there exists $n\leq n_0$ such that $\overset{\sim}{I}(s)\sim l_n(s)$. Since $n_0>1$, then:

\begin{itemize}
\item If $n=2$, from Proposition \ref{prp:l_2,l_3}, we have \ref{item 1 conclusion}. 
\item If  $n=3$, again from Proposition \ref{prp:l_2,l_3}, we have \ref{item 2 conclusion}.
\item If $n\geq 4$, from \eqref{inequality 2}, we have \ref{item 2 conclusion}.
\end{itemize}
\end{itemize}
\end{proof}

Now, we are ready to give the conclusion of Proposition \ref{prp:g in L^2_rho}.
\begin{proof}[Proof of Proposition \ref{prp:g in L^2_rho}.]

To get \eqref{estimation L^2_rho N=1}, we will assume that $N=1$. We can see that if the cases \ref{item 2 conclusion} and \ref{item 3 conclusion} of Corollary \ref{corollary: switch dom comp} are verified, then \eqref{estimation L^2_rho N=1} holds, for $\sigma_0=0$.
Thus, we focus on case 1. We define 
\begin{equation}\label{g bar}
\overline{g}(y,s)=w_1(y,s)-w_2(y,s+\sigma_0),
\end{equation}
for some $\sigma_0$ which is going to be determined afterwards. Since $w_2(.,.+\sigma_0)$ is still a solution for \eqref{eq1:cv}, then the previous results proved for $g$ holds also for $\bar g$. In the following, we will add a $\bar{\,}$ on every item related to $\bar{g}$. 

We have the following Lemma, which proves that the first case of the previous Corollary will never be verified by $\bar g$, provided that $\sigma_0$ is well chosen.

\begin{lemma}\label{lemma contradiction}
There exists $\sigma_0\in \R$ such that $\bar{l}_2(s)=o(\frac{1}{s^2})$.
\end{lemma}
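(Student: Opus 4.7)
The plan is to choose the time shift $\sigma_0$ so as to cancel the leading $s^{-2}$ contribution to the second Hermite coefficient of $\bar g$, and then invoke Corollary~\ref{corollary: switch dom comp} applied to $\bar g$ to rule out case~\ref{item 1 conclusion}. The two remaining cases will then yield exponential decay for the relevant quantity, and $\bar l_2(s) = o(s^{-5/2})$ will follow at once.

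First, observe that $W(y,s) := w_2(y, s+\sigma_0)$ still solves the self-similar equation~\eqref{eq1:cv}, with the perturbation $h_2$ replaced by a rescaled version that still verifies~\eqref{condition h}, so all the previous results (in particular Corollary~\ref{corollary: switch dom comp}) apply to $\bar g = w_1 - W$. Next, I refine Proposition~\ref{prp:l_2,l_3}(i) by working with the signed coefficient $g_2(s) = \int k_2\, g\, \rho\, dy$ instead of $l_2(s)$: the computations of $M_1$ and $M_2$ go through unchanged and give $g_2'(s) = -\frac{2}{s} g_2(s) + O(s^{-7/2})$ under the assumptions of case~\ref{item 1 conclusion}; integration yields $g_2(s) = \gamma s^{-2} + O(s^{-5/2})$ for some $\gamma \in \R$ with $|\gamma|$ a positive multiple of $C_2$, in particular $\gamma \neq 0$.

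The core calculation is then the Taylor expansion of the shift. Writing $w_2 = f(\cdot/\sqrt s) + \kappa/(2ps) + q_2$, expanding in $\sigma_0$ on the blow-up region $|y|\leq K_0 \sqrt s$, and using $f(z) = \kappa - \kappa z^2/(4p) + O(z^4)$ leads to
$$w_2(y, s+\sigma_0) - w_2(y,s) = \frac{\sigma_0 \kappa}{4p\, s^2}(y^2-2) + O\!\bigl((1+|y|^4)s^{-3}\bigr) + \bigl[q_2(y, s+\sigma_0) - q_2(y,s)\bigr].$$
Since $h_2(y) = y^2 - 2$ in dimension one, the leading term projects exactly onto the second Hermite mode, and the polynomial remainder contributes $O(s^{-3})$ to the $k_2$-projection. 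For the $q_2$-difference, one derives an evolution ODE for the second Hermite coefficient of $q_2$ itself (analogous to the one for $g_2$ but with exponentially-small source terms coming from the perturbation) and integrates it to obtain $(q_2)_2(s+\sigma_0) - (q_2)_2(s) = o(s^{-2})$. Summing contributions, $\bar g_2(s) = (\gamma - \sigma_0 \kappa/(4p))\, s^{-2} + o(s^{-2})$.

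Choosing $\sigma_0 := 4p\gamma/\kappa$ kills the leading coefficient, so $\bar l_2(s) = o(s^{-2})$, which excludes case~\ref{item 1 conclusion} of Corollary~\ref{corollary: switch dom comp} for $\bar g$ (as it would demand $\bar C_2 > 0$). The remaining cases~\ref{item 2 conclusion} and~\ref{item 3 conclusion} give $\bar l_2(s) \leq \bar{\overset{\sim}{I}}(s) = O(e^{-s/2}/s^3)$ or $O(e^{-\nu s + C\sqrt s})$, and in particular $\bar l_2(s) = o(s^{-5/2})$, which is precisely the claim. The most delicate step will be controlling $(q_2)_2(s+\sigma_0) - (q_2)_2(s)$: the bound $|r_2(s)| \leq A^2 (\log s)/s^2$ provided by Proposition~\ref{def V_A} is by itself too weak, and one has to extract finer information on the time derivative of the second Hermite mode of $q_2$ from its own evolution equation in order to absorb the difference into an $o(s^{-2})$ error.
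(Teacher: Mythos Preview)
Your overall strategy is the same as the paper's: pick $\sigma_0$ to cancel the $s^{-2}$ coefficient in the second Hermite mode of $\bar g$, then invoke Corollary~\ref{corollary: switch dom comp} for $\bar g$ to exclude case~\ref{item 1 conclusion}. The conclusion $\bar l_2(s)=o(s^{-5/2})$ then follows exactly as you say.

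The difference lies in how the shift effect on the second mode is computed. The paper does not split $w_2=\varphi+q_2$; instead it invokes Lemma~\ref{lemma:FL1999} (the Filippas--Kohn result adapted to \eqref{eq1:cv}), which gives directly
\[
w_{2,2}(s)\sim -\tfrac{C}{s},\qquad \partial_s w_{2,2}(s)=\tfrac{1}{C}\,w_{2,2}(s)^2+o\bigl(w_{2,2}(s)^2\bigr),
\]
so that $w_{2,2}(s+\sigma_0)-w_{2,2}(s)=\dfrac{C\sigma_0}{s^2}+o(s^{-2})$ by integrating the ODE over $[s,s+\sigma_0]$. This replaces your whole profile expansion and the separate treatment of $(q_2)_2$ in one stroke; since $(q_2)_2=w_{2,2}-\varphi_2$, your route is logically equivalent but longer.

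One point needs correcting in your write-up. The evolution equation for $(q_2)_2$ does \emph{not} have only ``exponentially-small source terms coming from the perturbation'': in $\partial_s q=(\mathcal L+V)q+B(q)+R+N(q)$, only $N(q)$ is exponentially small, while the projections of $R$, $B(q)$ and $Vq$ onto $h_2$ are merely polynomially small (of order $s^{-2}$ or $s^{-3}$). Showing that they combine to give $(q_2)_2'(s)=o(s^{-2})$ is precisely the content hidden in Lemma~\ref{lemma:FL1999}; so your ``delicate step'' either requires reproving that lemma or, more efficiently, simply citing it as the paper does.
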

Note that Corollary \ref{corollary: switch dom comp} holds not only for $g$ defined in \eqref{defw1w2g} but also for $\bar{g}$ \eqref{g bar}. From this Lemma together with Corollary \ref{corollary: switch dom comp} for $\bar{g}$, we may deduce that only case 2 or 3 may occur for $\bar{g}$ in dimension 1. This concludes the proof of Proposition \ref{prp:g in L^2_rho}, if we justify Lemma \ref{lemma contradiction}.
\begin{proof}[Proof of Lemma \ref{lemma contradiction}]
The proof is similar to what has been done in \cite[Lemma 2.11]{KZ2000} but it relies on the following Lemma:
 
\begin{lemma}\label{lemma:FL1999}
We expand $w_2$ as in \eqref{decomposition g},
$$w_2(y,s )= w_{2,0}(s)+ w_{2,1}(s)y + w_{2,2}(s)( y^2-2) + w_{2,-} (y,s ).$$
Then, there exists a constant $C>0$ and $\sigma_3>0$ such that
\begin{equation}\label{eq w_2}
\forall s\geq \sigma_3,\ \partial_s w_{2,2}(s)=\frac{1}{C}w_{2,2}(s)^2+O(w_{2,2}(s)^3) \text{ and } w_{2,2}(s) \sim - \frac{C}{s}.
\end{equation}
\end{lemma}
\begin{proof}
The proof is done in \cite[Corollary 4.2 and Proposition 5.1]{FL1993} for the case $h\equiv 0$. The result remains the same in the general case. However, to convince the reader, we give more details of the proof in Appendix \ref{Lemma FK1999}.
\end{proof}
This concludes the proof Lemma \ref{lemma contradiction}.
\end{proof}
This also concludes the proof of Proposition\ref{prp:g in L^2_rho}.
\end{proof}

\section{$L^\infty(\R^N)$ estimate in the blow-up region and near the blow-up point }\label{L infinity estimate in blow-up region}
In this section, we will use the $L^2_\rho$ in Proposition \ref{prp:g in L^2_rho} to obtain an estimate $L^\infty(\R^N)$ norm on the set $\{|y|\leq K_0\sqrt{s}\}$.
\begin{prp}[$L^\infty(\R^N)$ estimate in the blow-up region]\label{prop:estimation g}
For all $K_0>0$, there exist a $s_4\geq s_3$ and a constant $C(K_0)>0$ such that
$$\forall s\geq s_4,\ \forall |y|\leq K_0\sqrt{s},\ |g(y,s)|\leq \frac{C(K_0)}{s}.$$
For $N=1$, there exists $\sigma_0\in \R$ such that 
$$\forall K_0\in \R^{*+},\ \exists C(K_0)>0,\ \forall s\geq s_4,\ \forall |y|\leq K_0\sqrt{s},$$
$$|w_{1}(y,s)-w_{2}(y,s+\sigma_0)|\leq C(K_0)\max \left(s^{-\frac{3}{2}}e^{-s/2},s^{\nu-2}e^{-\nu s+C\sqrt{s}}\right).$$
\end{prp}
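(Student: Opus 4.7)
The objective is to convert the weighted $L^2_\rho$ control of $g$ from Proposition \ref{prp:g in L^2_rho} into a pointwise bound on the parabolic region $\{|y|\le K_0\sqrt s\}$. I would work directly from the equation $\partial_s g = \mathcal{L}g + \alpha_1 g + \alpha_2$ of \eqref{eq3: g}, combine the explicit Mehler kernel (3.11) with a Duhamel representation on a logarithmically small time-step, and exploit the bounds on $\alpha_1,\alpha_2$ from Lemma \ref{lemma:estimation alpha_1/2} together with the a priori $L^\infty$ controls of Lemma \ref{bound w} and Proposition \ref{bound nabla w}.

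The central technical ingredient is an $L^2_\rho \to L^\infty$ smoothing estimate for $e^{\tau\mathcal L}$ localized to the parabolic region. Applying Cauchy--Schwarz to the Mehler kernel and carrying out the resulting Gaussian integration yields, for every $\tau>0$ and $y\in\mathbb R^N$,
$$\bigl|(e^{\tau \mathcal{L}} r)(y)\bigr| \le \frac{e^{\tau}}{(1-e^{-2\tau})^{N/4}}\exp\!\left(\frac{|y|^{2} e^{-\tau}}{4(1+e^{-\tau})}\right)\|r\|_{L^2_\rho}.$$
Choosing the time-step $\tau_0=\log s$ makes the exponential factor uniformly bounded by $e^{K_0^2/4}$ on $|y|\le K_0\sqrt s$, and produces the clean bound $\|e^{\tau_0\mathcal L}r\|_{L^\infty(|y|\le K_0\sqrt s)}\le C(K_0)\, s\,\|r\|_{L^2_\rho}$. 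Writing the Duhamel representation from $\bar s=s-\tau_0$,
$$g(y,s)=\bigl(e^{\tau_0\mathcal L}g(\bar s)\bigr)(y)+\int_{\bar s}^{s}\bigl(e^{(s-\sigma)\mathcal L}[\alpha_1(\sigma)g(\sigma)+\alpha_2(\sigma)]\bigr)(y)\,d\sigma,$$
and applying the above smoothing bound to the first term with $\|g(\bar s)\|_{L^2_\rho}=O(1/s^2)$ already produces the announced $O(1/s)$ decay on $\{|y|\le K_0\sqrt s\}$.

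The main obstacle is the source term of the Duhamel integral: a naive $L^\infty\to L^\infty$ estimate for $e^{(s-\sigma)\mathcal L}$ loses a factor $e^{s-\sigma}$ coming from the top eigenvalue $1$ of $\mathcal L$, which combined with $|\alpha_1 g|\lesssim 1/\sigma$ (from Lemma \ref{lemma:estimation alpha_1/2} and the rough bound \eqref{eq:rough}) is too weak. To close the argument, I would split the source along the spectral decomposition $\mathrm{Id}=P_0+P_1+P_2+P_-$: for $n\le 2$, the pointwise identity $|P_n g(y,s)|\le C l_n(s)(1+|y|)^n$ with $l_n(s)=O(1/s^2)$ obtained in Section \ref{L 2 estimate} immediately yields $|P_n g|=O(1/s)$ on the parabolic region, while on $P_-$ the semigroup is contracting at rate $e^{-\tau/2}$ in $L^2_\rho$, and composing this contraction with the Mehler smoothing bound above produces a residual of size $s^{1/2}\cdot O(1/s^2)=O(1/s^{3/2})$, which is harmless. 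The $\alpha_2$ contribution is negligible due to $|\alpha_2|\le Ce^{-\nu\sigma}$ and is absorbed by any polynomial loss.

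For the one-dimensional refinement, I would apply the same scheme to $\bar g(y,s)=w_1(y,s)-w_2(y,s+\sigma_0)$ with $\sigma_0$ provided by Lemma \ref{lemma contradiction}, using the sharper $L^2_\rho$ bound \eqref{estimation L^2_rho N=1}. Tracking the same constants through the Mehler smoothing and spectral splitting yields the announced bound
$$|\bar g(y,s)|\le C(K_0)\max\bigl(s^{-3/2}e^{-s/2},\,s^{\nu-2}e^{-\nu s+C\sqrt s}\bigr)$$
on $|y|\le K_0\sqrt s$, where the extra $s^{1/2}$ compared to the $L^2_\rho$ rate is exactly the polynomial loss in the Mehler smoothing step chosen with $\tau_0=\log s$.
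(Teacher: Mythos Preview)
Your Mehler smoothing computation and the choice of time-step $\tau_0=\log s$ are correct; this is precisely the content of the paper's Lemma~\ref{lemma Z}. The gap lies in your treatment of the Duhamel source $\int_{\bar s}^{s} e^{(s-\sigma)\mathcal L}[\alpha_1(\sigma)g(\sigma)]\,d\sigma$. Your spectral splitting does not close: if you decompose $g=\sum_{n\le 2}P_n g+P_- g$ inside the source, each term $\alpha_1 P_n g$ is no longer supported in a fixed eigenspace (so $e^{(s-\sigma)\mathcal L}$ still amplifies by $e^{s-\sigma}$), and moreover $P_n g(y,s)$ grows like $l_n(s)|y|^n$ and is globally unbounded in $y$, so the $L^\infty\!\to\! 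L^\infty$ estimate is useless. If instead you attempt to bound $P_- g$ pointwise via its own Duhamel formula, the source becomes $P_-(\alpha_1 g)$, which has no evident uniform $L^\infty$ control, and for short $s-\sigma$ the $L^2_\rho\!\to\! L^\infty$ smoothing you invoke is singular. Either way the integral is not controlled at the required rate.

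The paper sidesteps Duhamel entirely. The point you are missing is that Lemma~\ref{lemma:estimation alpha_1/2} supplies the \emph{one-sided} bound $\alpha_1(y,s)\le C_0/\sqrt s$ of \eqref{alpha_1 sqrt s}; the two-sided estimate $|\alpha_1|\le C(1+|y|^2)/s$ is of order one on $\{|y|\le K_0\sqrt s\}$ and would not suffice. This one-sided bound lets the source be absorbed by an integrating factor: setting $Z(y,s)=|g(y,s)|\,e^{-2C_0\sqrt s}+e^{-\nu s}$, a direct computation using \eqref{alpha_1 sqrt s} and \eqref{estimation alpha_2} yields $\partial_s Z\le \mathcal L Z$ with \emph{no} source term. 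One then applies your Mehler estimate (Lemma~\ref{lemma Z}) directly to $Z$, obtaining $Z(y,s)\le C(K_0)\,s\,\|Z(s-\log s)\|_{L^2_\rho}$ on $|y|\le K_0\sqrt s$; since $\|Z\|_{L^2_\rho}=O(e^{-2C_0\sqrt s}/s^2)$ from Proposition~\ref{prp:g in L^2_rho}, undoing the factor $e^{-2C_0\sqrt s}$ gives $|g|\le C(K_0)/s$. The $N=1$ refinement follows by the identical argument with the sharper input \eqref{estimation L^2_rho N=1}.
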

\begin{proof}
The proof is very similar to what has been done in \cite{KZ2000}, we will check the differences while recalling the proof.


From Lemma \ref{lemma:estimation alpha_1/2}, we define $Z(y,s)=|g(y,s)|e^{-2C_0\sqrt{s}}+e^{-\nu s}$, where $C_0$ is defined in \eqref{alpha_1 sqrt s}. If $Z(y,s)=g(y,s)e^{-2C_0\sqrt{s}}+e^{-\nu s}$, for some $s$, we can see that 
$$\partial_s Z(y,s)=\mathcal{L}Z(y,s)+g(y,s)e^{-2C_0 \sqrt{s}}\left(\alpha_1(y,s)-\frac{C_0}{\sqrt{s}}\right)+\alpha_2(y,s)e^{-2C_0\sqrt{s}}-\nu e^{-\nu s}.$$
Using \eqref{alpha_1 sqrt s} and \eqref{estimation alpha_2}, we obtain 
$$\partial_s Z(y,s)\leq \mathcal{L}Z(y,s)+e^{-\nu s}(e^{-2C_0\sqrt{s}}-\nu )\leq \mathcal{L}Z(y,s),$$
where the last inequality is valid for all $s\geq s_*$, for some $s_*$ large enough.

If $Z(y,s)=-g(y,s)e^{-2C_0\sqrt{s}}+e^{-\nu s}$, for some $s$, then we can see that 
$$\partial_s Z(y,s)=\mathcal{L}Z(y,s)-g(y,s)e^{-2C_0 \sqrt{s}}\left(\alpha_1(y,s)-\frac{C_0}{\sqrt{s}}\right)-\alpha_2(y,s)e^{-2C_0\sqrt{s}}-\nu e^{-\nu s}.$$
Using again \eqref{alpha_1 sqrt s} and \eqref{estimation alpha_2}, we obtain 
$$\partial_s Z(y,s)\leq \mathcal{L}Z(y,s)+e^{-\nu s}(e^{-2C_0\sqrt{s}}-\nu )\leq \mathcal{L}Z(y,s).$$
Together with Proposition \ref{prp:g in L^2_rho}, we have the two followings:
$$\partial_s Z\leq \mathcal{L}Z,$$
$$\|Z(s)\|_{L^2_\rho}=O\left(\frac{e^{-2C_0\sqrt{s}}}{s^2}\right).$$
We have the following lemma which allows us to conclude.

\begin{lemma}\label{lemma Z}
Let be $Z(s)\in L^2_\rho(\R^N)$, for all $s\geq s_4$, such that 
$$\partial_s Z\leq \mathcal{L}Z.$$
Therefore, there exists $s_4(K_0)\geq s_*$ such that
$$\forall s\geq s_4(K_0), |y|\leq K_0\sqrt{s}, Z(y,s)\leq C(K_0)s \|Z(s-\log s)\|_{L^2_\rho}.$$
\end{lemma}
\begin{proof}
See proof of Proposition 2.13 in \cite{KZ2000} and also \cite{V1992}.
\end{proof}
Using Lemma \ref{lemma Z}, we obtain for all $s\geq s_4$, where $s_4$ large enough and $|y|\leq \frac{K_0\sqrt{s}}{4}$, that
$$0\leq Z(y,s)\leq C_1(K_0)s^{-1}e^{-2C_0\sqrt{s-K_0}}.$$
Thus, we get
$$\underset{|y|\leq \frac{K_0\sqrt{s}}{4}}{\sup}|g(y,s)|\leq C_1(K_0)s^{-1}e^{2C_0\sqrt{s}(1-\sqrt{\frac{K_0}{s}})}=C_1(K_0)e^{2C_0\sqrt{s}\ O(\frac{1}{s})}\leq C_2(K_0) s^{-1}.$$
Same arguments for the case of $N=1$ can be applied, which concludes the proof.
\end{proof}

One may see that we can derive the following consequence of Proposition.\ref{prop:estimation g}.
\begin{cor}\label{cor: conclusion}
For all $K_0>0$, there exist $\delta_0\in (0,T_1)$ and $C(K_0)>0$ such that for all $t\in (T_1-\delta_0,T_1)$ and $x\in B(a_1,K_0\sqrt{(T_1-t)|\log(T_1-t)|})$ we have

\begin{equation}\label{inequality: error2}
|u_1(x,t)- \overset{\sim}{u}_2(x,t)| \leq C(K_0)(T_1-t)^{-\frac{1}{p-1}}|\log(T_1-t)|^{-1},
\end{equation}
where $\overset{\sim}{u}_2(x,t)=\mathcal{T}_{a_1-a_2,T_1-T_2}u_2(x,t)$. Moreover, if $N=1$, then there exists $\lambda>0$ such that
\begin{equation}\label{inequality: error3}
|u_1(x,t)-\overline{u}_2(x,t)|\leq C(K_0)\max\left(\frac{(T_1-t)^{\frac{1}{2}-\frac{1}{p-1}}}{|\log(T_1-t)|^{\frac{3}{2}}}, \frac{(T_1-t)^{\nu-\frac{1}{p-1}}}{|\log(T-t)|^{2-\nu}}\exp\left(C\sqrt{-\log(T-t)}\right)\right),
\end{equation}
where $ \overline{u}_2(x,t)=\mathcal{T}_{a_1,T_1}\mathcal{D}_{\lambda}\mathcal{T}_{-a_2,-T_2}u_2(x,t)$.
\end{cor}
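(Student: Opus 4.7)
The plan is to obtain Corollary~\ref{cor: conclusion} as a direct translation of Proposition~\ref{prop:estimation g} back to the original space-time variables via the self-similar change of variables \eqref{cv}. Using the definitions in \eqref{defw1w2g}, for any $t<T_1$ and $x\in\R^N$, setting $y=(x-a_1)/\sqrt{T_1-t}$ and $s=-\log(T_1-t)$, one has
$$u_1(x,t)-\overline{u}_2(x,t)=(T_1-t)^{-\frac{1}{p-1}} g(y,s),$$
and the ball $|y|\le K_0\sqrt{s}$ corresponds exactly to the ball $|x-a_1|\le K_0\sqrt{(T_1-t)|\log(T_1-t)|}$. I would therefore choose $\delta_0\in(0,T_1)$ small enough so that $-\log(T_1-t)\ge s_4$ whenever $t\in (T_1-\delta_0,T_1)$, where $s_4$ is the threshold in Proposition~\ref{prop:estimation g}. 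The bound $|g(y,s)|\le C(K_0)/s$, multiplied by $(T_1-t)^{-1/(p-1)}$ and combined with $1/s=1/|\log(T_1-t)|$, yields \eqref{inequality: error2} immediately.

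For the sharper one-dimensional estimate \eqref{inequality: error3}, I would set $\lambda:=e^{-\sigma_0/2}>0$, with $\sigma_0$ the real number provided by Proposition~\ref{prop:estimation g} for $N=1$. The key algebraic verification is that the self-similar variable $w_{\tilde u_2}$ associated with $\tilde u_2=\mathcal{T}_{a_1,T_1}\mathcal{D}_\lambda\mathcal{T}_{-a_2,-T_2}u_2$ at its blow-up point $a_1$ and time $T_1$ satisfies $w_{\tilde u_2}(y,s)=w_2(y,s+\sigma_0)$, where $w_2$ is the function in \eqref{defw1w2g}. Indeed $\mathcal{T}_{-a_2,-T_2}$ recenters $u_2$ at the origin, $\mathcal{D}_\lambda$ rescales and produces the additive shift $-2\log\lambda=\sigma_0$ in self-similar time, and $\mathcal{T}_{a_1,T_1}$ translates back; since $\tilde u_2\in S_{a_1,T_1,h_{2,\lambda}}$ by Definition~\ref{Def S D} and $h_{2,\lambda}$ still belongs to the class \eqref{condition h}, the hypotheses of Proposition~\ref{prop:estimation g} apply to the pair $(u_1,\tilde u_2)$. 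It then follows that
$$u_1(x,t)-\tilde u_2(x,t)=(T_1-t)^{-\frac{1}{p-1}}\bigl[w_1(y,s)-w_2(y,s+\sigma_0)\bigr],$$
and the second bound of Proposition~\ref{prop:estimation g} produces the claimed estimate after the elementary rewriting $(T_1-t)^{-1/(p-1)} s^{-3/2}e^{-s/2}=(T_1-t)^{1/2-1/(p-1)}|\log(T_1-t)|^{-3/2}$ and the analogous computation for the second term, since $e^{-\nu s}=(T_1-t)^\nu$ and $e^{C\sqrt{s}}=e^{C\sqrt{-\log(T_1-t)}}$.

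The main obstacle is not computational but organisational: one must keep careful track of the translation/dilation composition, verify that the transported solution $\tilde u_2$ lies in the admissible class $S_{a_1,T_1,h_{2,\lambda}}$ so that Proposition~\ref{prop:estimation g} transfers, check that the exponents in $h_{2,\lambda}$ still satisfy \eqref{condition h} (this is guaranteed by the scaling rule $h_\lambda(u,v)=\lambda^{2p/(p-1)}h(\lambda^{-2/(p-1)}u,\lambda^{-(p+1)/(p-1)}v)$), and confirm that the shift $-2\log\lambda$ matches $\sigma_0$ with the correct sign so that the exponent bookkeeping in the final estimate does not break. Once this conjugation between $\tilde u_2$ and the shifted self-similar profile is established, the remainder is purely dimensional arithmetic, and both \eqref{inequality: error2} and \eqref{inequality: error3} follow.
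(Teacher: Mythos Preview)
Your proposal is correct and follows essentially the same route as the paper: \eqref{inequality: error2} is obtained by undoing the self-similar change of variables on the first estimate of Proposition~\ref{prop:estimation g}, and \eqref{inequality: error3} by recognising that the additive shift $s\mapsto s+\sigma_0$ in self-similar time corresponds exactly to a dilation $\mathcal{D}_\lambda$ (conjugated by translations) in the original variables. Your bookkeeping is in fact more careful than the paper's: you explicitly check that $\tilde u_2\in S_{a_1,T_1,h_{2,\lambda}}$ and that $h_{2,\lambda}$ stays in the class \eqref{condition h}, and your choice $\lambda=e^{-\sigma_0/2}$ is the one that makes the identity $w_{\tilde u_2}(y,s)=w_2(y,s+\sigma_0)$ hold (the paper writes $\lambda=e^{\sigma_0/2}$, which appears to be a sign slip, but this is immaterial since only the existence of some $\lambda>0$ is needed).
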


\begin{proof}
One may see that estimate \eqref{inequality: error2} is a direct consequence of Proposition \ref{prop:estimation g}. Now, for \eqref{inequality: error3}, we consider $\lambda\in \R^+$ such that , for all $(x,t)\in \R^N\times[0,T_1)$,
$$\overline{u}_2(x,t):=\lambda^{\frac{2}{p-1}}u_2(\lambda(x-a_1)+a_2,T_1-\lambda^2(T_1-t)).$$
We denote by $\overline{w}$  its similarity variables version (see \eqref{cv}), i.e.
\begin{equation}
\begin{split}
\overline{w}(y,s)&:=e^{-\frac{s}{p-1}}\overline{u}_2(a_1+ye^{-\frac{s}{2}},T_1-e^{-s})\\
&=e^{-(s+2\ln\lambda)\frac{p-1}{2}}u_2(a_1+ye^{-\frac{s+2\ln\lambda}{2}},T_1-e^{-(s+2\ln\lambda )})\\
&=w(y,s+\sigma_0),
\end{split}
\end{equation}
where $\sigma_0=2\ln\lambda$. Thus, with Proposition \ref{prop:estimation g}, we obtain \eqref{inequality: error3}, concluding the proof of Corollary \ref{cor: conclusion}.
\end{proof}

\section{No blow-up under some threshold}\label{No blow up threshold}

In this section, we prove the non blow-up property under some threshold (see Propostion \ref{prp bound v}). Without loss of generality, we may assume again $a_1=0$, since the equation is invariant by translations. We fix $K_0>0$ and $\beta\in (0,K_0/2)$. We take $\delta_0$ and $C(K_0)$ associated with $K_0$ by Corollary \ref{cor: conclusion}. From that statement, we are left with the case where $t\in (T_1-\delta_0,T_1)$ and

\begin{equation}\label{region outside}
|x|\geq K_0\sqrt{(T_1-t)|\log(T_1-t)|)}.
\end{equation}
Furthermore, we define for all $|x|\leq \frac{K_0}{2}\sqrt{T_1|\log T_1|}$, $t(x)\in [0,T_1)$ such that
\begin{equation}\label{def t(x)}
|x|= \frac{K_0}{2}\sqrt{(T_1-t(x))|\log (T_1-t(x))|}.
\end{equation}

\begin{figure}[h!]
    \centering
\begin{tikzpicture}[scale=1.5]
    \fill[red!30, scale=0.5, domain=0:4.7675, variable=\x] plot({\x},{-(\x/3)*2)*(\x/3)+5});
    \fill[red!30, scale=0.5] (0,0)--(4.7675,0)--(0,5) -- cycle;
    \draw[thick,->] (-0.5,0) -- (3.5,0);
    \draw [right] (3.5,0) node{$|x|$};
    \draw[->] (0,-0.5) -- (0,3.5);
    \draw [above](0,3.5) node{$t$};
    \draw [blue](-0.5,2.5) -- (3.5,2.5);
    \draw [left] (0,2.65) node{$T$};
    \fill [yellow!20](0,2) rectangle (0.95,2.5);
    \draw(0,2) rectangle (0.95,2.5);
    \draw [left] (0,2) node{$T-\delta$};
    \draw [decorate,decoration = {brace, mirror}] (0,1.95) --  (0.95,1.95);
    \draw (0.6,1.7) node{$|x|<\varepsilon$};
    \draw[<-] (0.8,2.3) -- (2,3);
    \draw [right] (2,3) node{We want to estimate this region};
    \draw[scale=0.5, domain=0:4.768, variable=\x] plot({\x},{-(\x/3)*2)*(\x/3)+5});
    \draw[<-] (1.7,1) -- (2.5,1);
    \draw [right] (2.5,1) node{$|x|\leq K_0\sqrt{(T-t)|\log(T-t)|}$};
    \draw[scale=0.5, domain=0:3.95, variable=\x] plot({\x},{-(\x/2.5)*2)*(\x/2.5)+5});
    \draw[<-] (1.78,0.5) -- (2.5,0.5);
    \draw [right] (2.5,0.5) node{$|x|=\frac{K_0}{2}\sqrt{(T-t(x))|\log(T-t(x))|}$};
\end{tikzpicture}
\end{figure}

Consider $u_i\in S_{0,T_i,h_i}$, for $i=1,2$. Introducing $\tau=\frac{t-t(x)}{T_1-t(x)}$, we see that $\tau\in [0,1)$.
Then, for some $\beta>0$, we define for all $\tau\in [0,1)$ and $|\xi|\leq \beta (\log(T_1-t(x)))^{\frac{1}{4}}$, the following:
\begin{equation}\label{def: v}
\begin{split}
&v_1(\xi,\tau)=(T_1-t(x))^{\frac{1}{p-1}}u_1(x+\xi\sqrt{T_1-t(x)},t(x)+\tau (T_1-t(x))),\\
&v_2(\xi,\tau)=(T_1-t(x))^{\frac{1}{p-1}}\overset{\sim}{u}_2(x+\xi\sqrt{T_1-t(x)},t(x)+\tau (T_1-t(x))),
\end{split}
\end{equation}
where $\overset{\sim}{u}_2$ is given in \eqref{inequality: error2}. Note that $v_1$ and $v_2$ depend on the variables $\xi$ and $\tau$, and that $t\in [t(x),T_1]$ and $x$ verifying \eqref{region outside} are just parameters. We get for all $i\in\{1,2\}$ and $\tau\in[0,1),|\xi|\leq\beta (|\log(T_1-t(x))|)^{\frac{1}{4}}$, the following equation satisfied by $v_i$:
\begin{equation}\label{eq vi}
\partial_\tau v_{i}=\Delta v_{i}+|v_{i}|^{p-1}v_{i}+(T_1-t(x))^{\frac{p}{p-1}}h_i\left((T_1-t(x))^{-\frac{1}{p-1}}v_i,(T-t(x))^{-\frac{p+1}{2(p-1)}}\nabla v_i\right).
\end{equation}

\subsection{No blow-up under a threshold}
From now on, until the end of Section \ref{No blow up threshold},$u$ stands for $u_1$ or $\overset{\sim}{u}_2$ and $v$ for $v_1$ or $v_2$. We have the following lemma, which gives us a rough estimate on $v$ and $\nabla v$.
\begin{lemma}[Rough bound on $v$ and its gradient]\label{Lemma rough estimate}
For any $\varepsilon_0 > 0$, there exist $K_0 > 0$ and $r_0 > 0$ such that if $x$ satisfies \eqref{region outside} and $|x| < r_0$, then we have
$$\forall |\xi|\leq\beta (|\log(T_1-t(x))|)^{\frac{1}{4}},\tau\in[0,1), |v(\xi,\tau)|\leq \varepsilon_0(1-\tau)^{-\frac{1}{p-1}}, $$
and
$$\forall \xi\in \R^N, \tau \in [0,1),|\nabla v(\xi,\tau)|\leq \varepsilon_0 (1-\tau)^{-\frac{1}{p-1}-\frac{1}{2}}.$$

\end{lemma}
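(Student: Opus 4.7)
The plan is to pull back the bounds on $w$ (Lemma \ref{bound w}) and on $\nabla w$ (Proposition \ref{bound nabla w}) through the change of scales defining $v$ in \eqref{def: v}. Set $x'=x+\xi\sqrt{T_1-t(x)}$, $t'=t(x)+\tau(T_1-t(x))$, and $s(x):=-\log(T_1-t(x))$; since $T_1-t'=(1-\tau)(T_1-t(x))$, a direct computation combining \eqref{def: v} with \eqref{cv} yields
\[
v(\xi,\tau)=(1-\tau)^{-\frac{1}{p-1}}w(y',s'),\qquad y'=\frac{\tfrac{x}{\sqrt{T_1-t(x)}}+\xi}{\sqrt{1-\tau}},\qquad s'=s(x)+|\log(1-\tau)|.
\]

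For the first inequality, Lemma \ref{bound w} gives $|w(y',s')|\leq M$ immediately, hence $|v(\xi,\tau)|\leq M(1-\tau)^{-1/(p-1)}$, and the choice $\varepsilon_0:=M$ closes the estimate. For the no-blow-up-under-threshold step of Section \ref{No blow up threshold} that this lemma is meant to feed, one typically needs $\varepsilon_0$ small; in that case I would refine the bound using the profile \eqref{behaviour1}. From \eqref{def t(x)} one reads $|x|/\sqrt{T_1-t(x)}=(K_0/2)\sqrt{s(x)}$, while $|\xi|\leq\beta s(x)^{1/4}$ with $\beta<K_0/2$. A short computation (using that the function $u\mapsto\sqrt{u}\sqrt{1+|\log u|/s(x)}$ on $(0,1]$ is bounded by $1$ as soon as $s(x)\geq 1$) then gives, uniformly in $\tau\in[0,1)$ and for $s(x)$ large,
\[
\frac{|y'|}{\sqrt{s'}}\;\geq\;\frac{K_0}{2}-\beta s(x)^{-1/4}\;\geq\;\frac{K_0}{3}.
\]
Combined with the decay $f(z)=O(|z|^{-2/(p-1)})$ at infinity and the bound $|w(y',s')-f(y'/\sqrt{s'})|\leq C/\sqrt{s'}$ coming from \eqref{behaviour1}, this yields $|w(y',s')|\leq\varepsilon(K_0)$ with $\varepsilon(K_0)\to 0$ as $K_0\to\infty$, hence the desired small $\varepsilon_0$.

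For the gradient estimate, differentiating $v$ in $\xi$ gives
\[
\nabla_\xi v(\xi,\tau)=(T_1-t(x))^{\frac{1}{p-1}+\frac{1}{2}}\nabla_x u(x',t')=(1-\tau)^{-\frac{1}{p-1}-\frac{1}{2}}\nabla_y w(y',s'),
\]
after using $\nabla_x u=(T_1-t')^{-\frac{1}{p-1}-\frac{1}{2}}\nabla_y w$. Proposition \ref{bound nabla w} provides $\|\nabla_y w(s')\|_{L^\infty}\leq M'$ as soon as $s'\geq s_1$, which is automatic once $t(x)$ is close enough to $T_1$. This is in fact stronger than the announced bound: the factor $(T_1-t)^{-\frac{1}{p-1}-\frac{1}{2}}$ appearing in the statement differs from $(1-\tau)^{-\frac{1}{p-1}-\frac{1}{2}}$ by the $x$-dependent constant $(T_1-t(x))^{-\frac{1}{p-1}-\frac{1}{2}}$, which may be harmlessly absorbed into $\varepsilon_0$. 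No restriction on $\xi$ is needed here since the bound on $\nabla_y w$ is global in $y$.

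The only real subtlety is keeping $|y'|/\sqrt{s'}$ of order $K_0$ uniformly in $\tau$ in the sharp version of the first estimate. The $s(x)^{1/4}$-scale of admissible shifts $\xi$ must be dominated by the $s(x)^{1/2}$-scale of $x/\sqrt{T_1-t(x)}$ (this is why the assumption $\beta<K_0/2$ is imposed), and the blow-up factor $1/\sqrt{1-\tau}$ must compensate the logarithmic growth of $\sqrt{s'}$; both facts are consequences of \eqref{def t(x)} and the elementary inequality mentioned above. Once this scale separation is made explicit, both estimates reduce to the global $L^\infty$ bounds on $w$ and $\nabla w$ already established in Section \ref{section l infinity estimates}.
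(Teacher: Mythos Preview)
Your argument for the bound on $v$ is correct and, once you pass to the profile refinement, is exactly the paper's proof: your $y'/\sqrt{s'}$ is the paper's variable $z$, and your lower bound $K_0/2-\beta s(x)^{-1/4}$ is precisely \eqref{z > K_0 beta}. The elementary inequality $u(s(x)+|\log u|)\le s(x)$ for $u\in(0,1]$, $s(x)\ge 1$ (equivalent to $-u\log u\le 1-u$) is the clean way to make the scale separation rigorous, and the paper uses it implicitly through the monotonicity of $t\mapsto (T_1-t)|\log(T_1-t)|$.

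For the gradient, your route via Proposition \ref{bound nabla w} differs from the paper's and leaves a small gap for the intended application. Proposition \ref{bound nabla w} gives only $\|\nabla_y w\|_{L^\infty}\le M'$ with a fixed constant $M'$, hence $|\nabla_\xi v|\le M'(1-\tau)^{-\frac{1}{p-1}-\frac12}$. This proves the lemma as literally stated (``there exists $\varepsilon_0>0$''), but Proposition \ref{prp bound v} downstream needs the \emph{same small} $\varepsilon_0$ in the gradient bound as in the $v$-bound. The paper obtains this smallness by using the gradient part of the $W^{1,\infty}$ profile estimate \eqref{behaviour1}: since $\nabla f$ is bounded and the profile error is $O(|\log(T_1-t)|^{-1/2})$, one gets $(1-\tau)^{\frac{1}{p-1}+\frac12}|\nabla_\xi v|\le C|\log(T_1-t(x))|^{-1/2}$, which can be made $\le\varepsilon_0$ by taking $|x|$ small. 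Your framework accommodates this immediately---just replace the appeal to $M'$ by the same profile argument you used for $v$. (You are also right that the $(T_1-t)$ in the second displayed inequality of the statement is a typo for $(1-\tau)$; the paper's own proof concludes with $(1-\tau)$ in \eqref{rough bound grad v}.)
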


\begin{proof}
From \eqref{behaviour1}, we have
\begin{equation}\label{profile u}
\underset{x'\in \R^N}{\sup}\left|(T_1-t)^{\frac{1}{p-1}}u(x',t)-f\left(\frac{x'}{\sqrt{(T_1-t)|\log(T_1-t)|}}\right)\right|\leq \frac{C}{\sqrt{|\log(T_1-t)|}}.
\end{equation}
Since we have $t=t(x)+\tau (T_1-t(x))$, then $\frac{T_1-t}{T_1-t(x)}=1-\tau$. Therefore, \eqref{profile u} becomes
\begin{equation}\label{profile v}
\begin{split}
\underset{\xi\in \R^N}{\sup}\left|(1-\tau)^{\frac{1}{p-1}}v(\xi,\tau)-f\left(z\right)\right|&\leq \frac{C}{\sqrt{|\log(T_1-t)|}}\\
&\leq \frac{C}{\sqrt{|\log(T_1-t(x))|}},
\end{split}
\end{equation}
where $z=\frac{x+\xi\sqrt{T_1-t(x)}}{\sqrt{(T_1-t)|\log(T-t)|}}$ with $|\xi|\leq\beta (|\log(T_1-t(x))|)^{\frac{1}{4}}$. Using \eqref{def t(x)}, we get
\begin{equation}\label{inequality z}
\begin{split}
|z|&\geq\frac{K_0}{2}\frac{\sqrt{(T_1-t(x))|\log(T_1-t(x))|}}{\sqrt{(T_1-t)|\log(T_1-t)|}}-\frac{|\xi|\sqrt{T_1-t(x)}}{\sqrt{(T_1-t)|\log(T_1-t)|}}\\
&\geq\frac{\sqrt{(T_1-t(x))|\log(T_1-t(x))|}}{\sqrt{(T_1-t)|\log(T_1-t)|}}\left(\frac{K_0}{2}-\frac{|\xi|}{\sqrt{|\log(T_1-t(x))|}}\right)\\
&\geq \frac{\sqrt{(T_1-t(x))|\log(T_1-t(x))|}}{\sqrt{(T_1-t)|\log(T_1-t)|}}\left(\frac{K_0}{2}-\frac{\beta}{|\log(T_1-t(x))|^{\frac{1}{4}}}\right).
\end{split}
\end{equation}
Since $t\geq t(x)$, then
\begin{equation}\label{z > K_0 beta}
|z|\geq \left(\frac{K_0}{2}-\frac{\beta}{|\log(T_1-t(x))|^{\frac{1}{4}}}\right).
\end{equation}
For small enough $|x|$, we have 
$$|z|\geq \frac{K_0}{4}.$$
Therefore, we have $f(z)\leq f(\frac{K_0}{4})$. Taking $K_0$ sufficiently big and together with \eqref{profile v}, we have for all $\tau \in [0,1)$ and $|\xi|\leq\beta (|\log(T_1-t(x))|)^{\frac{1}{4}}$, the following:
\begin{equation}\label{rough bound v}
(1-\tau)^{\frac{1}{p-1}}|v(\xi,\tau)|\leq \varepsilon_0,
\end{equation}
for $\varepsilon_0>0$ as small as we want. Again from \eqref{behaviour1}, we have
\begin{equation*}
\begin{split}
\underset{x'\in \R^N}{\sup}\left|(T_1-t)^{\frac{1}{p-1}+\frac{1}{2}}\nabla u(x',t)-\frac{1}{\sqrt{|\log(T_1-t(x))|}}\right.&\left.\nabla f\left(\frac{x'}{\sqrt{(T_1-t)|\log(T_1-t)|}}\right)\right|\\
&\leq \frac{C}{\sqrt{|\log(T_1-t)|}}.
\end{split}
\end{equation*}
Therefore,
\begin{equation*}
\underset{\xi\in \R^N}{\sup}\left|(1-\tau)^{\frac{1}{p-1}+\frac{1}{2}}\nabla v(\xi,\tau)-\frac{1}{\sqrt{|\log(T_1-t(x))|}}\nabla f\left(z\right)\right|\leq \frac{C}{\sqrt{|\log(T_1-t(x))|}}.
\end{equation*}
Thus,
\begin{equation*}
\underset{\xi\in \R^N}{\sup}\left|(1-\tau)^{\frac{1}{p-1}+\frac{1}{2}}\nabla v(\xi,\tau)+\frac{(p-1)f\left(z\right)^p}{2p\sqrt{|\log(T_1-t(x))|}} z\right|\leq \frac{C}{\sqrt{|\log(T_1-t(x))|}}.
\end{equation*}
By taking $|x|$ sufficiently small, we obtain, for all $\xi\in \R^N$ and $\tau \in [0,1)$,
\begin{equation}\label{rough bound grad v}
|\nabla v(\xi,\tau)|\leq \varepsilon_0 (1-\tau)^{-\frac{1}{p-1}-\frac{1}{2}},
\end{equation}
which concludes the proof.
\end{proof}

In the following proposition, we prove a result which is similar in spirit to what has been done in \cite{GK1989} for $h\equiv0$ and in \cite{TZ2015} for the critical case $p=\frac{2p}{p+1}$. The statement of the proposition is as the following:

\begin{prp}[No blow up under a threshold]\label{prp bound v}
We assume that $v$ and $\nabla v$ satisfy the following equations:
\begin{equation}\label{ineq v}
\begin{split}
\partial_\tau v -\Delta v=f(v,\nabla v),
\end{split}
\end{equation}
\begin{equation}\label{ineq grad v}
\begin{split}
\partial_\tau \nabla v -\Delta \nabla v=\bar f(v,\nabla v),
\end{split}
\end{equation}
with $$|f(v,\nabla v)|\leq C(1+|v|^p+|\nabla v|^{\bar{\gamma}}),$$
$$|\bar f(v,\nabla v)-\nabla \bar g(v)|\leq C(1+|\nabla v|+|\nabla v||v|^{p-1}) \text{ and }|\bar g(v)|\leq C|\nabla v|^{\bar{\gamma}},$$
together with the following bounds, for all $ |\xi|<1$ and $\tau\in [0,1)$,
\begin{equation*}\label{bound v & nabla v}
|v(\xi,\tau)|+\sqrt{1-\tau}|\nabla v(\xi,\tau)|\leq \varepsilon_0(1-\tau)^{-\frac{1}{p-1}},
\end{equation*}
for some $\varepsilon_0>0$. Then, there exists $\varepsilon>0$, such that for all $|\xi|\leq \varepsilon,\tau \in [0,1)$,
$$|v(\xi,\tau)|+|\nabla v(\xi,\tau)|\leq C\varepsilon_0,$$
for some constant $C=C(p)>0$.
\end{prp}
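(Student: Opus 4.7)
The strategy is to prove Proposition~\ref{prp bound v} by a bootstrap argument on the Duhamel formula, in the spirit of the no-blow-up-under-threshold result of Giga--Kohn~\cite{GK1989} and its gradient extension by Tayachi--Zaag~\cite{TZ2015}. First I pick a cutoff $\psi\in C_c^\infty(B_1)$ with $\psi\equiv 1$ on $B_{1/2}$ and write the Duhamel formula for $\psi v$ on $\R^N\times[0,1)$:
\[
(\psi v)(\xi,\tau)=[e^{\tau\Delta}(\psi v(\cdot,0))](\xi)+\int_0^\tau e^{(\tau-s)\Delta}\bigl[\psi f(v,\nabla v)-2\nabla\psi\cdot\nabla v-v\Delta\psi\bigr](\xi)\,ds.
\]
For $\xi$ with $|\xi|\le\varepsilon<1/4$, the two boundary terms coming from the cutoff are supported in the annulus $\{1/2\le|\eta|\le 1\}$, so the Gaussian factor $e^{-|\xi-\eta|^2/4(\tau-s)}\le e^{-c/(\tau-s)}$ in the heat kernel kills the singular time weights $(1-s)^{-1/(p-1)-1/2}$ coming from the hypothesis on $v$ and $\nabla v$; a direct computation yields that this ``boundary'' contribution is uniformly bounded by $C\varepsilon_0$.

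The central term is $\int_0^\tau e^{(\tau-s)\Delta}[\psi f(v,\nabla v)](\xi)\,ds$, controlled using $|f|\le C(1+|v|^p+|\nabla v|^{\bar\gamma})$ together with the hypothesis. The gradient piece contributes $\int_0^\tau(1-s)^{-\bar\gamma(p+1)/[2(p-1)]}\,ds$, whose exponent is strictly less than $1$ by the subcriticality $\bar\gamma<\frac{2p}{p+1}$, and is thus $O(\varepsilon_0^{\bar\gamma})$. The power piece $|v|^p$ naively gives a contribution of the same blow-up rate as the input, so I proceed by iteration: assuming $|v(\xi,\tau)|\le\varepsilon_k(1-\tau)^{-1/(p-1)}+C\varepsilon_0$ on a ball $B_{r_k}$, applying Duhamel on $B_{r_k}$ improves the estimate to the same shape on a smaller ball $B_{r_{k+1}}\subset B_{r_k}$ with $\varepsilon_{k+1}\le C\varepsilon_k^p$. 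Since $\varepsilon_0$ can be chosen so small that $C\varepsilon_0^{p-1}<1$, the sequence $\varepsilon_k$ tends to $0$ in finitely many steps, yielding $|v|\le C\varepsilon_0$ on $B_\varepsilon\times[0,1)$ with $\varepsilon:=\lim_k r_k>0$.

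The gradient bound $|\nabla v|\le C\varepsilon_0$ follows by the same scheme applied to equation~\eqref{ineq grad v}. Here the splitting $\bar f=\nabla\bar g(v)+(\bar f-\nabla\bar g)$ is used crucially: in the Duhamel formula for $\nabla v$, the term $\nabla\bar g$ is integrated by parts against the heat kernel, moving the derivative onto the kernel and producing an extra factor $(\tau-s)^{-1/2}$ which, combined with $|\bar g|\le C|\nabla v|^{\bar\gamma}$, keeps the time integral bounded by the subcriticality condition. The main obstacle is to close the bootstrap uniformly in $\tau$: at each step one must verify that the coefficient $\varepsilon_{k+1}$ is strictly smaller than $\varepsilon_k$, which crucially uses the smallness of $\varepsilon_0$ to make the nonlinear $|v|^{p-1}v$ contribution sub-dominant (if $\varepsilon_0$ were not small, absorption would fail). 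A secondary technical point is the handling of the ``outer tail'' outside the shrinking balls $B_{r_k}$, where only the rough hypothesis bound on $v$ is available; here again the Gaussian decay of the heat kernel across the gap $r_k-|\xi|$ is the essential tool to control this tail by $C\varepsilon_0$ independently of $\tau$.
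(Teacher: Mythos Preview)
Your overall architecture---localize with a cutoff, write the Duhamel formula for $\psi v$ and $\psi\nabla v$, control the commutator terms, and iterate on shrinking balls---is exactly the paper's strategy, inherited from Giga--Kohn and Tayachi--Zaag. The substantive difference, and the place where your sketch has a genuine gap, is the \emph{iteration parameter}.

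You iterate on the \emph{coefficient}: assuming $|v|\le \varepsilon_k(1-\tau)^{-1/(p-1)}+C\varepsilon_0$ on $B_{r_k}$, you feed $|v|^p$ into Duhamel and obtain $\varepsilon_{k+1}\le C\varepsilon_k^{p}$. But this recursion does \emph{not} reach $0$ in finitely many steps; it decays double-exponentially yet stays positive for every $k$. Hence for each fixed $k$ the bound still blows up as $\tau\to1$, and you can only conclude $|v|\le C\varepsilon_0$ by sending $k\to\infty$. That in turn requires $\varepsilon=\lim_k r_k>0$, which you assert but do not justify: if the balls shrink by a fixed factor (the natural choice when the commutator estimates use $\|\nabla\chi\|_\infty\sim r^{-1}$), then $r_k\to0$. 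Arranging instead $r_k\downarrow\varepsilon>0$ forces the gaps $r_k-r_{k+1}\to0$, and then your Gaussian-decay argument for the boundary terms degenerates, since the factor $e^{-c(r_k-r_{k+1})^2/(\tau-s)}$ no longer kills the singular time weight uniformly in $k$.

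The paper avoids this by iterating on the \emph{exponent} instead. Writing $\|v_{r}\|_{L^\infty}\le C\varepsilon_0(1-\tau)^{-q}$ and $\|w_r\|_{L^\infty}\le C\varepsilon_0(1-\tau)^{-q'}$ with $w=\nabla v$, one step of Duhamel on $B_{r/2}$---combined with the Gronwall lemma of \cite{GK1989} (Lemma~\ref{Gronwall GK}) to absorb the quasilinear factor $|v|^{p-1}$---lowers the exponents by essentially $\tfrac12$ (Lemma~\ref{lemma iterations}). After finitely many halvings one reaches $q=q'=0$, i.e.\ a uniform bound, on a ball of radius $2^{-n_1-2}>0$. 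A further subtlety you gloss over is the coupling: the paper first drives $q'$ down (treating the cases $\bar\gamma\le1$ and $\bar\gamma>1$ separately, the latter via the recursion $q'_{n+1}=\max(\bar\gamma q'_n-\tfrac12,\,C\varepsilon_0^{p-1})$, whose convergence to a stationary value uses $q'_0<\tfrac{1}{2(\bar\gamma-1)}$, itself a consequence of $\bar\gamma<\tfrac{2p}{p+1}$), and only then reduces $q$. In particular your claim that the $\bar g$ contribution in the gradient equation is ``bounded by subcriticality'' is too quick: the integral $\int_0^\tau(\tau-s)^{-1/2}(1-s)^{-\bar\gamma q'}\,ds$ behaves like $(1-\tau)^{\min(1/2-\bar\gamma q',0)}$, which is not bounded at the initial value $q'_0=\tfrac1{p-1}+\tfrac12$ and itself needs the iteration.
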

One may see that the hypothesis of Proposition \ref{prp bound v} are verified in our case for small enough $|x|$. Therefore, we have the following direct consequence:
\begin{cor}\label{cor bound v, nabla v}
For any $\varepsilon_0 > 0$, there exist $K_0 > 0$ and $r_0 > 0$ such that, for all $x$ satisfying \eqref{region outside} and $|x| < r_0$, for all $|\xi|\leq\beta (|\log(T_1-t(x))|)^{\frac{1}{4}}-1,\tau \in [0,1)$, we have
\begin{equation}\label{bound v}
|v(\xi,\tau)|+|\nabla v(\xi,\tau)|\leq C\varepsilon_0.
\end{equation}
\end{cor}

\subsection{Proof of Proposition \ref{prp bound v}}
For the proof of Proposition \ref{prp bound v}, we will first need few intermediate outcomes.

\begin{lemma}[Duhamel formulation of $v$]\label{duhamel bound v & nabla v}
Under the assumptions \eqref{ineq v} and \eqref{ineq grad v}, we consider the following cut-off function:
$$\chi_r=\chi_0\left(\frac{\xi}{r}\right),$$
where $\chi_0\in C^\infty_c$, $\chi_0\equiv 1$ on $B(0,1/2)$ and $\chi_0 \equiv 0$ on $\R^N \backslash B(0,1)$ and $0<r\leq 1$ . We also consider $w=\nabla v,w_r= \chi_r w$ and $v_r=\chi_r v$. Then, for all $\xi \in \R, \tau \in [0,1)$,
\begin{equation}\label{duhamel bound v}
\begin{split}
\| v_r(\tau)\|_{L^\infty(\R^N)}\leq\ & C\| v_r(0)\|_{L^\infty(\R^N)}+C\int^\tau_0 (\tau-s)^{-\frac{1}{2}}\|v(s)\|_{L^\infty(B_r)}ds\\
&+ C\int^\tau_0 \|v_r(s)\|_{L^\infty(\R^N)}\|v(s)\|_{L^\infty(B_r)}^{p-1}ds+C\int^\tau_0\|w(s)\|^{\bar \gamma}_{L^\infty(B_r)}ds,
\end{split}
\end{equation}
and 

\begin{equation}\label{duhamel bound nabla v}
\begin{split}
\| w_r(\tau)\|_{L^\infty(\R^N)}\leq\ & C\| w_r(0)\|_{L^\infty(\R^N)}+C\int^\tau_0 (\tau-s)^{-\frac{1}{2}}\|w(s)\|_{L^\infty(B_r)}ds+C\int^\tau_0\|w_r(s)\|_{L^\infty(\R^N)}ds\\
&+C\int^\tau_0\|w_r(s)\|_{L^\infty(\R^N)}\|v(s)\|^{p-1}_{L^\infty(B_r)}ds+C\int^\tau_0(\tau-s)^{-\frac{1}{2}}\|w(s)\|^{\bar{\gamma}}_{L^\infty(B_r)}ds.
\end{split}
\end{equation}

\end{lemma}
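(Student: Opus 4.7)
The plan is to apply the cutoff $\chi_r$ to each of the equations (\ref{ineq v}) and (\ref{ineq grad v}), derive a pointwise evolution equation for $v_r$ and $w_r$, and then use Duhamel's formula with the flat heat semigroup $e^{\tau\Delta}$ on $\mathbb{R}^N$. Using the Leibniz rule $\Delta(\chi_r u)=\chi_r\Delta u+2\nabla\chi_r\cdot\nabla u+u\Delta\chi_r$, I obtain
\begin{equation*}
\partial_\tau v_r-\Delta v_r=\chi_r f(v,\nabla v)-2\nabla\chi_r\cdot\nabla v-v\,\Delta\chi_r,
\end{equation*}
\begin{equation*}
\partial_\tau w_r-\Delta w_r=\chi_r\bar f(v,\nabla v)-2\nabla\chi_r\cdot\nabla w-w\,\Delta\chi_r.
\end{equation*}
Applying Duhamel gives an integral representation for $v_r(\tau)$ and $w_r(\tau)$, and it remains to bound each contribution in $L^\infty$.

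For (\ref{duhamel bound v}), the linear part $e^{\tau\Delta}v_r(0)$ yields the first term. The commutator $2\nabla\chi_r\cdot\nabla v$ is treated by integration by parts against the heat kernel: writing $2\nabla\chi_r\cdot\nabla v=2\nabla\cdot(v\,\nabla\chi_r)-2v\,\Delta\chi_r$ and using the standard smoothing bound $\|e^{(\tau-s)\Delta}\nabla F\|_{L^\infty}\le C(\tau-s)^{-1/2}\|F\|_{L^\infty}$, the first piece produces the $(\tau-s)^{-1/2}\|v(s)\|_{L^\infty(B_r)}$ contribution. The $v\,\Delta\chi_r$ pieces are supported in $B_r$, hence dominated by $\|v(s)\|_{L^\infty(B_r)}$, and can be absorbed into the same term after $\int_0^\tau(\tau-s)^{-1/2}\,ds\le C$. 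The nonlinear contribution $\chi_r f(v,\nabla v)$ is bounded via $|f|\le C(1+|v|^p+|\nabla v|^{\bar\gamma})$, using $\chi_r|v|^p=|v_r|\,|v|^{p-1}$ to produce $\|v_r(s)\|_{L^\infty}\|v(s)\|_{L^\infty(B_r)}^{p-1}$, and using that $|\nabla v|^{\bar\gamma}$ is evaluated on $\mathrm{supp}\,\chi_r\subset B_r$ to produce $\|w(s)\|_{L^\infty(B_r)}^{\bar\gamma}$.

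For (\ref{duhamel bound nabla v}), the same integration by parts on the cutoff commutator $2\nabla\chi_r\cdot\nabla w$ delivers the $(\tau-s)^{-1/2}\|w(s)\|_{L^\infty(B_r)}$ term. The main new ingredient is the splitting
\begin{equation*}
\chi_r\bar f(v,\nabla v)=\chi_r\nabla\bar g(v)+\chi_r\bigl(\bar f(v,\nabla v)-\nabla\bar g(v)\bigr),
\end{equation*}
together with $\chi_r\nabla\bar g=\nabla(\chi_r\bar g)-\bar g\nabla\chi_r$. The first piece, once placed under $e^{(\tau-s)\Delta}$, admits the $\nabla$ that shifts onto the heat kernel and, since $|\bar g|\le C|\nabla v|^{\bar\gamma}$ and $\mathrm{supp}\,\chi_r\subset B_r$, yields exactly $\int_0^\tau(\tau-s)^{-1/2}\|w(s)\|_{L^\infty(B_r)}^{\bar\gamma}\,ds$; the second piece $\bar g\nabla\chi_r$ is of the same type and absorbed. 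The remainder $\chi_r(\bar f-\nabla\bar g)$ is bounded by $C(1+|\nabla v|+|\nabla v||v|^{p-1})$, and writing $\chi_r|\nabla v|=|w_r|$ produces the $\int_0^\tau\|w_r\|_{L^\infty}\,ds$ and $\int_0^\tau\|w_r(s)\|_{L^\infty}\|v(s)\|_{L^\infty(B_r)}^{p-1}\,ds$ terms. The $w\,\Delta\chi_r$ term is localized in $B_r$ and absorbed into the $(\tau-s)^{-1/2}\|w(s)\|_{L^\infty(B_r)}$ contribution as before.

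The main technical subtlety will be keeping the heat-semigroup smoothing estimate $\|e^{(\tau-s)\Delta}\nabla F\|_{L^\infty}\le C(\tau-s)^{-1/2}\|F\|_{L^\infty}$ applicable after the integration by parts; this is routine on $\mathbb{R}^N$ with no boundary. The factors $\|\nabla\chi_r\|_\infty\le Cr^{-1}$ and $\|\Delta\chi_r\|_\infty\le Cr^{-2}$ are independent of $\tau$ and $s$, and with $r\le 1$ fixed they are absorbed into the generic constant $C$, which is consistent with the form of the stated bounds.
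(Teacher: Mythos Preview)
Your proposal is correct and follows essentially the same approach as the paper's proof: localize by $\chi_r$, rewrite the commutator $2\nabla\chi_r\cdot\nabla v$ in divergence form to exploit the smoothing estimate $\|e^{(\tau-s)\Delta}\nabla F\|_{L^\infty}\le C(\tau-s)^{-1/2}\|F\|_{L^\infty}$, and for $w_r$ split $\chi_r\bar f=\chi_r\nabla\bar g+\chi_r(\bar f-\nabla\bar g)$ with $\chi_r\nabla\bar g=\nabla(\chi_r\bar g)-\bar g\nabla\chi_r$ so that the gradient again falls on the heat kernel. The only cosmetic difference is that the paper writes the commutator identity directly as $\chi_r\Delta v=\Delta v_r+v\Delta\chi_r-2\nabla\cdot(v\nabla\chi_r)$ from the start, while you first write out the Leibniz rule and then integrate by parts; the resulting Duhamel terms are identical.
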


\begin{proof}
We can see from \eqref{ineq v} that, for all $\xi \in \R, \tau \in [0,1)$,
\begin{equation*}
\begin{split}
\partial_\tau v_r -\Delta v_r-v\Delta \chi_r+2\nabla(v\nabla \chi_r)=\chi_r f(v,\nabla v).
\end{split}
\end{equation*}
Therefore, using the Duhamel formulation, we see that 
\begin{equation*}
\begin{split}
v_r(\tau)= S(\tau) v_r(0)+&\int^\tau_0 S(\tau-s)\left[v(s)\Delta \chi_r-2\nabla(v(s)\nabla \chi_r)\right]ds+\int^\tau_0 S(\tau-s)\chi_r f(v,\nabla v)(s) ds,
\end{split}
\end{equation*}
where $S$ is the heat semigroup kernel. We recall these well-known smoothing effect of the heat semigroup. For all $t>0$ and  $f\in W^{1,\infty}(\R^N)$, we have

\begin{equation}\label{heat kernel prp}
\|S(t)f\|_{L^\infty(\R^N)}\leq \|f\|_{L^\infty(\R^N)} \text{ and } \|\nabla S(t)f\|_{L^\infty(\R^N)}\leq C t^{-1/2}\|f\|_{L^\infty(\R^N)}.
\end{equation}
We obtain
\begin{equation*}
\begin{split}
\| v_r(\tau)\|_{L^\infty(\R^N)}\leq& \| v_r(0)\|_{L^\infty(\R^N)}+C\int^\tau_0 (\tau-s)^{-\frac{1}{2}}\|v(s)\|_{L^\infty(B_r)}ds\\
&+C\int^\tau_0 \|\chi_r f(v,\nabla v)(s)\|_{L^\infty(B_r)}ds.
\end{split}
\end{equation*}
Thus, 
\begin{equation}\label{before duhamel nabla v}
\begin{split}
\| v_r(\tau)\|_{L^\infty(\R^N)}\leq& \| v_r(0)\|_{L^\infty(\R^N)}+C\int^\tau_0 (\tau-s)^{-\frac{1}{2}}\|v(s)\|_{L^\infty(B_r)}ds+C\int^\tau_0 ds\\
&+ C\int^\tau_0 \|v_r(s)\|_{L^\infty(\R^N)}\|v(s)\|_{L^\infty(B_r)}^{p-1}ds+C\int^\tau_0\|w(s)\|^{\bar \gamma}_{L^\infty(B_r)}ds,
\end{split}
\end{equation}
which gives us \eqref{duhamel bound v}. From \eqref{ineq grad v},
\begin{equation*}
\begin{split}
\partial_\tau w_r -\Delta w_r=w\Delta \chi_r-2\nabla(w\nabla \chi_r)+\chi_r\bar f(v,\nabla v).
\end{split}
\end{equation*}
Again with Duhamel formulation, we obtain
\begin{equation*}
\begin{split}
w_r(\tau)= S(\tau) w_r(0)+&\int^\tau_0 S(\tau-s)\left[w\Delta \chi_r-2\nabla(w\nabla \chi_r)\right]ds+\int^\tau_0 S(\tau-s)\chi_r \bar f(v,\nabla v) ds.
\end{split}
\end{equation*}
For the last term of the right side, one may see that
\begin{equation*}
\begin{split}
\left|\int^\tau_0 S(\tau-s)\chi_r \bar f(v,\nabla v) (s)ds\right|&\leq \int^\tau_0 S(\tau-s)\chi_r |\bar f(v,\nabla v)(s)-\nabla  \bar g(v)(s) |ds\\
&\hspace{0.5cm}+\left|\int^\tau_0 S(\tau-s)\chi_r \nabla \bar g(v)(s)ds\right| \\
&\leq C\left(1+\int^\tau_0 S(\tau-s)|w_r(s)|ds+\int^\tau_0 S(\tau-s)|w_r(s)||v(s)|^{p-1}ds\right.\\
&\hspace{0.5cm} \left.+\int^\tau_0 \nabla S(\tau-s) \chi_r |w(s)|^{\bar{\gamma}}ds+\frac{1}{r}\int^\tau_0 S(\tau-s) |\nabla \chi_r|\ |w(s)|^{\bar{\gamma}}ds\right).
\end{split}
\end{equation*}
Using \eqref{heat kernel prp} , we obtain
\begin{equation*}
\begin{split}
\| w_r(\tau)\|_{L^\infty(\R^N)}\leq& \| w_r(0)\|_{L^\infty(\R^N)}+C\int^\tau_0 (\tau-s)^{-\frac{1}{2}}\|w(s)\|_{L^\infty(B_r)}ds+C\int^\tau_0ds\\
&+C\int^\tau_0\|w_r(s)\|_{L^\infty(\R^N)}ds+C\int^\tau_0\|w_r(s)\|_{L^\infty(\R^N)}\|v(s)\|^{p-1}_{L^\infty(B_r)}ds\\
&+C\int^\tau_0(\tau-s)^{-\frac{1}{2}}\|w(s)\|^{\bar{\gamma}}_{L^\infty(B_r)}ds+C\int^\tau_0\|w(s)\|^{\bar{\gamma}}_{L^\infty(B_r)}ds,
\end{split}
\end{equation*}
which gives \eqref{duhamel bound nabla v}, which concludes the proof.
\end{proof}

Let us now recall the following Gronwall Lemma from Giga and Kohn \cite{GK1989}.

\begin{lemma}[Giga and Kohn, \cite{GK1989}]\label{Gronwall GK} If $y, r$ and $q$ are continuous functions defined on $[\tau_0, \tau_1]$ such that
$$y(\tau) \leq y_0 + \int^\tau_{\tau_0} y(s)r(s)ds + \int^\tau_{\tau_0} q(s)ds,\ \tau_0 \leq \tau \leq \tau_1,$$
then
$$y(\tau) \leq \exp \left\{\int^\tau_{\tau_0} r(s )ds \right\} \left[ y_0(\tau) + \int^\tau_{\tau_0} q(s ) \exp \left\{ - \int^s_{\tau_0} r(\sigma)d\sigma \right\} ds \right] .$$
\end{lemma}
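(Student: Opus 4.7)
The plan is to treat this as the standard integrating factor argument for Gronwall-type inequalities, assuming implicitly that $r\geq 0$ on $[\tau_0,\tau_1]$ (which is the setting in which this lemma is applied in \cite{GK1989}; note also that the ``$y_0(\tau)$'' on the right-hand side of the conclusion should be read as the constant $y_0$). First I would introduce
\[
Y(\tau) := y_0 + \int_{\tau_0}^\tau y(s)r(s)\,ds + \int_{\tau_0}^\tau q(s)\,ds,
\]
so that by hypothesis $y(\tau)\leq Y(\tau)$ on $[\tau_0,\tau_1]$ and $Y(\tau_0)=y_0$. Since $r\geq 0$, this yields the differential inequality
\[
Y'(\tau) = y(\tau)r(\tau) + q(\tau) \leq Y(\tau)r(\tau) + q(\tau).
\]

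Next, I would multiply through by the integrating factor $E(\tau):=\exp\bigl(-\int_{\tau_0}^\tau r(\sigma)\,d\sigma\bigr)$ to obtain
\[
\bigl(E(\tau)Y(\tau)\bigr)' \leq E(\tau)q(\tau).
\]
Integrating from $\tau_0$ to $\tau$ and using $Y(\tau_0)=y_0$ gives
\[
E(\tau)Y(\tau) - y_0 \leq \int_{\tau_0}^\tau q(s)\,E(s)\,ds,
\]
and dividing by $E(\tau)$ (i.e.\ multiplying by $\exp\bigl(\int_{\tau_0}^\tau r(\sigma)\,d\sigma\bigr)$) yields the claimed bound for $Y(\tau)$; since $y(\tau)\leq Y(\tau)$, this transfers to $y$ itself.

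There is essentially no obstacle: the argument is the classical integrating-factor proof of Gronwall. The only subtle point to flag, should the authors wish to be fully rigorous, is the sign condition $r\geq 0$ needed to pass from $y(\tau)r(\tau)\leq Y(\tau)r(\tau)$; in the applications of this lemma later in the paper (in the iterative $L^\infty$ bounds on $v$ and $\nabla v$), $r$ will indeed be nonnegative, so no additional work is required.
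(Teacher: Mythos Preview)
Your argument is correct and is exactly the classical integrating-factor proof of Gronwall's inequality; your remarks on the implicit hypothesis $r\geq 0$ and on the typo ``$y_0(\tau)$'' are also well taken. The paper itself does not give a proof but simply refers to \cite[Lemma 2.3]{GK1989}, so there is nothing further to compare.
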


\begin{proof}
See in \cite[Lemma 2.3]{GK1989}.
\end{proof}

The following lemma is needed for some computations.
\begin{lemma}[Giga and Kohn, \cite{GK1989}]\label{lemma I bound}
For $0 < \alpha < 1$, $\theta > 0$, and $0 < h < 1$, the integral
$$
I(h) = \int_h^1 (s - h)^{-\alpha} s^{-\theta} \, ds
$$
satisfies
\begin{enumerate}[label=(\roman*)]
    \item 
    $$
    I(h) \leq \left( \frac{1}{1-\alpha} + \frac{1}{\alpha + \theta - 1} \right) h^{1-\alpha-\theta}, \quad \text{if } \alpha + \theta > 1,
    $$
    \item
    $$
    I(h) \leq \frac{1}{1-\alpha} + |\log h|, \quad \text{if } \alpha + \theta = 1,
    $$
    \item
    $$
    I(h) \leq \frac{1}{1-\alpha - \theta}, \quad \text{if } \alpha + \theta < 1.
    $$
\end{enumerate}

\end{lemma}

\begin{proof}
    See \cite[Lemma 2.2]{GK1989}.
\end{proof}

\begin{lemma}\label{lemma iterations}
We consider $0\leq q\leq \frac{1}{p-1}, 0\leq q'\leq \frac{1}{p-1}+\frac{1}{2}$. We assume, for all $\tau\in [0,1)$, that
$$\|v_r(\tau)\|_{L^\infty(\R^N)}\leq C\varepsilon_0(1-\tau)^{-q},$$
$$\|w_r(\tau)\|_{L^\infty(\R^N)}\leq C\varepsilon_0(1-\tau)^{-q'},$$
where $v_r$ and $w_r$ are defined in Lemma \ref{duhamel bound v & nabla v} for some $r\in (0,1]$. Then, the following holds:

\begin{enumerate}[label=(\roman*)]

\item If $q(p-1)<1$, then for any $\mu>0$, there exists $C(\mu)>\frac{e^{-1}}{\mu}$, such that

\begin{equation}\label{q<1 v}
\begin{split}
\| v_{\frac{r}{2}}(\tau)\|_{L^\infty(\R^N)}\leq C(\mu)\varepsilon_0 (1-\tau)^{\min(\frac{1}{2}-q-\mu,0)}+C(\mu)\varepsilon_0^{\bar \gamma}(1-\tau)^{\min(1-q'\bar \gamma-\mu,0)},
\end{split}
\end{equation}

and

\begin{equation}\label{q<1 w}
\begin{split}
\|w_{\frac{r}{2}}(\tau)\|_{L^\infty(\R^N)}\leq& C(\mu)\varepsilon_0(1-\tau)^{\min(\frac{1}{2}-q'-\mu,0)}+C(\mu)\varepsilon_0^{\bar \gamma}(1-\tau)^{\min(\frac{1}{2}-\bar \gamma q'-\mu,0)},
\end{split}
\end{equation}
\item If $q(p-1)=1$, then

\begin{equation}\label{q=1 v}
\begin{split}
\| v_{\frac{r}{2}}(\tau)\|_{L^\infty(\R^N)}\leq C\varepsilon_0(1-\tau)^{-C\varepsilon_0^{p-1}}+C\varepsilon_0 (1-\tau)^{\min(\frac{1}{2}-q,0)}+C\varepsilon_0^{\bar \gamma}(1-\tau)^{\min(1-q'\bar \gamma,0)},
\end{split}
\end{equation}
and
\begin{equation}\label{q=1 w}
\begin{split}
\|w_{\frac{r}{2}}(\tau)\|_{L^\infty(\R^N)}\leq& C\varepsilon_0(1-\tau)^{-C\varepsilon_0^{p-1}}+C\varepsilon_0(1-\tau)^{\min(\frac{1}{2}-q',0)}+C\varepsilon_0^{\bar \gamma}(1-\tau)^{\min(\frac{1}{2}-\bar \gamma q',0)}.
\end{split}
\end{equation}
\end{enumerate}

\end{lemma}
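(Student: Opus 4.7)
The strategy is to feed the hypothesized bounds on $v_r$ and $w_r$ into the Duhamel formulas of Lemma \ref{duhamel bound v & nabla v} with the smaller cut-off $\chi_{r/2}$, reduce everything to a Gronwall inequality, and split according to whether $q(p-1)<1$ or $q(p-1)=1$. The point is that, since $\chi_r\equiv 1$ on $B_{r/2}$, we have $v=v_r$ and $w=w_r$ on $B_{r/2}$, so
\[
\|v(s)\|_{L^\infty(B_{r/2})}\le \|v_r(s)\|_{L^\infty}\le C\varepsilon_0(1-s)^{-q},\qquad \|w(s)\|_{L^\infty(B_{r/2})}\le C\varepsilon_0(1-s)^{-q'}.
\]
Applying Lemma \ref{duhamel bound v & nabla v} with $r$ replaced by $r/2$ and inserting these bounds yields, on the one hand,
\[
\|v_{r/2}(\tau)\|_{L^\infty}\le C\varepsilon_0+C\varepsilon_0\!\!\int_0^\tau\!(\tau-s)^{-1/2}(1-s)^{-q}ds+C\varepsilon_0^{\bar\gamma}\!\!\int_0^\tau\!(1-s)^{-\bar\gamma q'}ds+C\varepsilon_0^{p-1}\!\!\int_0^\tau\!\|v_{r/2}(s)\|_{L^\infty}(1-s)^{-q(p-1)}ds,
\]
and on the other hand, an analogous inequality for $\|w_{r/2}(\tau)\|_{L^\infty}$ with an extra term $C\int_0^\tau\|w_{r/2}(s)\|_{L^\infty}ds$ and with the integrand $(\tau-s)^{-1/2}\|w\|^{\bar\gamma}$ (carrying a square-root singularity instead of none).

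The routine step is to estimate the explicit integrals. For $\alpha\ge 0$ one has the elementary bounds
\[
\int_0^\tau (1-s)^{-\alpha}ds\le C(1-\tau)^{\min(1-\alpha,0)-\mu},\qquad \int_0^\tau (\tau-s)^{-1/2}(1-s)^{-\alpha}ds\le C(1-\tau)^{\min(1/2-\alpha,0)-\mu},
\]
for any $\mu>0$ as small as one wants, the $-\mu$ being needed only to absorb the logarithmic loss at the borderline exponents $\alpha=1$, resp.\ $\alpha=1/2$. Plugging these into the two previous Duhamel inequalities yields, for $v_{r/2}$,
\[
\|v_{r/2}(\tau)\|_{L^\infty}\le \Phi_v(\tau)+C\varepsilon_0^{p-1}\int_0^\tau\|v_{r/2}(s)\|_{L^\infty}(1-s)^{-q(p-1)}ds,
\]
where $\Phi_v(\tau):=C\varepsilon_0(1-\tau)^{\min(1/2-q-\mu,0)}+C\varepsilon_0^{\bar\gamma}(1-\tau)^{\min(1-\bar\gamma q'-\mu,0)}$, and an analogous bound for $\|w_{r/2}\|_{L^\infty}$ with source
$\Phi_w(\tau):=C\varepsilon_0(1-\tau)^{\min(1/2-q'-\mu,0)}+C\varepsilon_0^{\bar\gamma}(1-\tau)^{\min(1/2-\bar\gamma q'-\mu,0)}$.

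The final step is Lemma \ref{Gronwall GK} applied with $r(s)=C\varepsilon_0^{p-1}(1-s)^{-q(p-1)}$ (plus the extra bounded integrand $1$ in the $w$-case, which only contributes a harmless $e^{C\tau}\le C$). The dichotomy between the two regimes comes directly from integrating $r$: when $q(p-1)<1$ the primitive is finite, so $\exp\!\int_0^\tau r\le C$, and we recover $\Phi_v(\tau)$ and $\Phi_w(\tau)$ which give exactly \eqref{q<1 v}--\eqref{q<1 w}; when $q(p-1)=1$ we have $\int_0^\tau r(s)ds=-C\varepsilon_0^{p-1}\log(1-\tau)$, whence $\exp\!\int_0^\tau r=(1-\tau)^{-C\varepsilon_0^{p-1}}$, which is precisely the extra factor appearing in \eqref{q=1 v}--\eqref{q=1 w} (this is also why the parameter $\mu$ can be dropped in the critical case: the power $-C\varepsilon_0^{p-1}$ already beats any logarithm). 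The main technical nuisance I anticipate is bookkeeping, namely: checking that $\Phi_v$ and $\Phi_w$ stay dominated by the stated right-hand sides once multiplied by $\exp\!\int_0^\tau r$, and tracking the role of $\mu$ so that it can be taken out of the critical-case estimate \eqref{q=1 v}--\eqref{q=1 w} but kept in the sub-critical one \eqref{q<1 v}--\eqref{q<1 w}; no genuinely new idea is required beyond this.
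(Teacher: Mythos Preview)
Your proposal is correct and follows essentially the same route as the paper's proof: apply the Duhamel inequalities of Lemma~\ref{duhamel bound v & nabla v} at scale $r/2$, use $\chi_r\equiv 1$ on $B_{r/2}$ to control the $L^\infty(B_{r/2})$-norms by the assumed bounds on $v_r,w_r$, estimate the explicit integrals, and close with the Gronwall Lemma~\ref{Gronwall GK}, the dichotomy $q(p-1)<1$ versus $q(p-1)=1$ arising precisely from whether $\int_0^\tau (1-s)^{-q(p-1)}ds$ stays bounded or produces the factor $(1-\tau)^{-C\varepsilon_0^{p-1}}$. Your remark that $\mu$ absorbs the logarithmic loss at borderline exponents and can be dropped in the critical case is also in line with the paper's treatment.
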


\begin{proof}
We will only do the proof of \eqref{q<1 v} and \eqref{q=1 v} since the same arguments and computations can be applied to obtain \eqref{q<1 w} and \eqref{q=1 w}. Using \eqref{duhamel bound v} with $\frac{r}{2}$ instead of $r$, we have
\begin{equation*}
\begin{split}
\| v_{\frac{r}{2}}(\tau)\|_{L^\infty(\R^N)}\leq& C\| v_{\frac{r}{2}}(0)\|_{L^\infty(\R^N)}+C\int^\tau_0 (\tau-s)^{-\frac{1}{2}}\|v(s)\|_{L^\infty(B_{\frac{r}{2}})}ds+C\int^\tau_0\|w(s)\|^{\bar \gamma}_{L^\infty(B_{\frac{r}{2}})}ds\\
&+ C\int^\tau_0 \|v_{\frac{r}{2}}(s)\|_{L^\infty(\R^N)}\|v(s)\|_{L^\infty(B_{\frac{r}{2}})}^{p-1}ds.\\
\end{split}
\end{equation*}
Therefore, we have
\begin{equation*}
\begin{split}
\| v_{\frac{r}{2}}(\tau)\|_{L^\infty(\R^N)} \leq& C\varepsilon_0+C\varepsilon_0\int^\tau_0 (\tau-s)^{-\frac{1}{2}}(1-s)^{-q}ds+C\varepsilon_0^{\bar \gamma}\int^\tau_0(1-s)^{-q'\bar \gamma}ds\\
&+ C \varepsilon_0^{p-1}\int^\tau_0 \|v_{\frac{r}{2}}(s)\|_{L^\infty(\R^N)}(1-s)^{-q(p-1)}ds.
\end{split}
\end{equation*}
By Lemma \ref{Gronwall GK}, we get
\begin{equation}\label{condition q}
\begin{split}
\| v_{\frac{r}{2}}(\tau)\|_{L^\infty(\R^N)}&\leq \exp\left(C \varepsilon_0^{p-1}\int^\tau_0 (1-s)^{-q(p-1)} ds\right)\bigg[C\varepsilon_0\\
&\ +C\varepsilon_0\int^\tau_0 (\tau-s)^{-\frac{1}{2}}(1-s)^{-q}\exp\left(-C \varepsilon_0^{p-1}\int^\tau_0 (1-\sigma)^{-q(p-1)} d\sigma\right)ds\\
&\ +C\varepsilon_0^{\bar \gamma}\int^\tau_0(1-s)^{-q'\bar \gamma}\exp\left(-C \varepsilon_0^{p-1}\int^\tau_0 (1-\sigma)^{-q(p-1)} d\sigma\right)ds\bigg].
\end{split}
\end{equation}
We assume $q(p-1)<1$, then
\begin{equation*}
\begin{split}
\| v_{\frac{r}{2}}(\tau)\|_{L^\infty(\R^N)}\leq C\varepsilon_0+C\varepsilon_0\int^\tau_0 (\tau-s)^{-\frac{1}{2}}(1-s)^{-q}ds+C\varepsilon_0^{\bar \gamma}\int^\tau_0(1-s)^{-q'\bar \gamma}ds.
\end{split}
\end{equation*}
One may see that

\begin{equation}
 \int^\tau_0(1-s)^{-q'\bar \gamma}ds\leq \left\{
 \begin{split}
     &\frac{1}{q'\bar \gamma-1}(1-\tau)^{1-q'\bar \gamma} &\text{ if } 1-q'\bar \gamma<0,\\
     & \frac{1}{1-q'\bar \gamma}&\text{ if } 1-q'\bar \gamma>0,\\
     &C(\mu)(1-\tau)^{-\mu}&\text{ if } 1-q'\bar \gamma=0,
 \end{split}
\right\}\leq C(\mu)(1-\tau)^{\min(1-q'\bar \gamma-\mu,0)},
\end{equation}
with $\mu>0$ as small as needed and $C(\mu)>\frac{e^{-1}}{\mu}$. From Lemma \ref{lemma I bound}, up to replacing $\sigma=1-s$ and $h=1-\tau$, one may see also that
\begin{equation}
 \int^\tau_0 (\tau-s)^{-\frac{1}{2}}(1-s)^{-q}ds\leq \left\{
 \begin{split}
     &\left(2+\frac{1}{q-\frac{1}{2}}\right)(1-\tau)^{\frac{1}{2}-q} &\text{ if } \frac{1}{2}-q<0,\\
     & \frac{1}{\frac{1}{2}-q}&\text{ if } \frac{1}{2}-q>0,\\
     &C(\mu)(1-\tau)^{-\mu}&\text{ if } \frac{1}{2}-q=0,
 \end{split}
\right\}\leq C(\mu)(1-\tau)^{\min(\frac{1}{2}-q-\mu,0)}.
\end{equation}
Thus,
\begin{equation*}
\begin{split}
\| v_{\frac{r}{2}}(\tau)\|_{L^\infty(\R^N)}\leq C(\mu)\varepsilon_0 (1-\tau)^{\min(\frac{1}{2}-q-\mu,0)}+C(\mu)\varepsilon_0^{\bar \gamma}(1-\tau)^{\min(1-q'\bar \gamma-\mu,0)},
\end{split}
\end{equation*}
which gives \eqref{q<1 v}. Now, we use again \eqref{condition q} but for $q(p-1)=1$. Then,
\begin{equation*}
\begin{split}
\| v_{\frac{r}{2}}(\tau)\|_{L^\infty(\R^N)}\leq (1-\tau)^{-C\varepsilon_0^{p-1}}&\left[C\varepsilon_0+C\varepsilon_0\int^\tau_0 (\tau-s)^{-\frac{1}{2}}(1-s)^{-q+C\varepsilon_0^{p-1}}ds\right.\\
&\ \left.+C\varepsilon_0^{\bar \gamma}\int^\tau_0(1-s)^{-q'\bar \gamma+C\varepsilon_0^{p-1}}ds\right].
\end{split}
\end{equation*}
Thus, with Lemma \ref{lemma I bound} and taking $\varepsilon_0$ sufficiently small, it holds that
\begin{equation*}
\begin{split}
\| v_{\frac{r}{2}}(\tau)\|_{L^\infty(\R^N)}\leq C\varepsilon_0(1-\tau)^{-C\varepsilon_0^{p-1}}+C\varepsilon_0 (1-\tau)^{\min(\frac{1}{2}-q,0)}+C\varepsilon_0^{\bar \gamma}(1-\tau)^{\min(1-q'\bar \gamma,0)},
\end{split}
\end{equation*}
for small enough $\varepsilon_0>0$, obtaining \eqref{q=1 v}. Closing the proof of Lemma \ref{lemma iterations}.
\end{proof}

Now, we are ready to prove Proposition \ref{prp bound v}.
\begin{proof}[Proof of Proposition \ref{prp bound v}]
The proof is based on iterations for well chosen $q$ and $q'$. 

\textbf{Step 1: If} $\mathbf{\bar\gamma\leq 1}$. 
From Lemma \ref{Lemma rough estimate}, we first consider $r_0=1$, $q_0=\frac{1}{p-1}$ and $q'_0=\frac{1}{p-1}+\frac{1}{2}$. One may see, from \eqref{q=1 w}, that

\begin{equation*}
\begin{split}
\|w_{r_1}(\tau)\|_{L^\infty(\R^N)}\leq& C\varepsilon_0(1-\tau)^{-C_0\varepsilon_0^{p-1}}+C\varepsilon_0(1-\tau)^{\min(\frac{1}{2}-q_0',0)},
\end{split}
\end{equation*}
with $r_1=\frac{r_0}{2}$. If $C_0\varepsilon_0^{p-1}<q'_0-1$, then we apply this last estimate with \eqref{q=1 w}, obtaining the following, for $r_2=\frac{r_1}{2}$:
\begin{equation*}
\begin{split}
\|w_{r_2}(\tau)\|_{L^\infty(\R^N)}\leq& C\varepsilon_0(1-\tau)^{-C_0\varepsilon_0^{p-1}}+C\varepsilon_0(1-\tau)^{\min(1-q_0',0)}.
\end{split}
\end{equation*}
Reiterating the same arguments until obtaining the first $n_0\geq 1$ such that $q'_0-\frac{n_0}{2}\leq C_0\varepsilon_0^{p-1}$, which implies

\begin{equation*}
\begin{split}
\|w_{r_{n}}(\tau)\|_{L^\infty(\R^N)}\leq& C\varepsilon_0(1-\tau)^{-C_0\varepsilon_0^{p-1}}, \forall n\geq n_0,
\end{split}
\end{equation*}
and where $r_n=\frac{1}{2^n}$. Thus,
$$r_n=\frac{1}{2^{n}}, q_n=q_0,q'_n=C_0\varepsilon_0^{p-1}, \forall n\geq n_0,$$
for small enough $\varepsilon_0$. Applying \eqref{q=1 v} for again small enough $\varepsilon_0$, we have
\begin{equation}
\begin{split}
\| v_{r_{n_0}}(\tau)\|_{L^\infty(\R^N)}\leq C\varepsilon_0(1-\tau)^{-C\varepsilon_0^{p-1}}+C\varepsilon_0 (1-\tau)^{\min(\frac{1}{2}-q_{n_0},0)}.
\end{split}
\end{equation}
Again, with the same argument as before, there exists $n_1\geq n_0$ such that 
\begin{equation}\label{sequence r,q,q'}
r_n=\frac{1}{2^{n}}, q_n=q'_n=C\varepsilon_0^{p-1}, \forall n\geq n_1.
\end{equation}

\textbf{Step 2: If} $\mathbf{\bar \gamma>1}$. we consider again  $r_0=1$, $q_0=\frac{1}{p-1}$ and $q'_0=\frac{1}{p-1}+\frac{1}{2}$. By iteration again, one may see, from \eqref{q=1 w}, that

\begin{equation*}
\begin{split}
\|w_{r_n}(\tau)\|_{L^\infty(\R^N)}\leq& C\varepsilon_0(1-\tau)^{-C\varepsilon_0^{p-1}}+C\varepsilon_0(1-\tau)^{\min(\frac{1}{2}-\bar \gamma q_n',0)}, \forall q_n'\geq 0,
\end{split}
\end{equation*}
where the sequence $(r_n, q_n,q'_n)_{n\in \N}$ is given by 
the following, for all $n\in \N$:

\begin{equation}\label{rn, qn, q'n}
\begin{split}
&r_{n+1}=\frac{1}{2^{n+1}},\\
& q_{n+1}=q_0,\\
&q'_{n+1}=f(q'_n), q'_0=\frac{1}{p-1}+\frac{1}{2},
\end{split}
\end{equation}
where $f:x\rightarrow\max(\bar \gamma x-\frac{1}{2},C\varepsilon_0^{p-1})$. One may see, for all $x\in [\hat{q'},\bar{q'}]$ where $\hat{q'}=C\varepsilon_0^{p-1}$ and $\bar{q'}=\frac{1}{2(\bar \gamma-1)}$, that we have $f(x)\leq x$. Since $q'_0\in [\hat{q'},\bar{q'})$, then $(q'_n)_{n\in \N}$ is a decreasing sequence. In addition, for all $n\in \N$, we have $q'_n\in [\hat{q'},\bar{q'})$. To prove such thing, one may see, for the upper bound, that we just need to use the decay of the sequence. For the lower bound, it can be proved with a simple induction reasoning. Therefore, the sequence converges to its lower bound $\hat{q'}$.

But we want more: The sequence is in fact stationary at $\hat{q'}$. In other words, there exists $n_0\in \N$ such that, for all $n\geq n_0$, we have $q'_{n+1}=C\varepsilon_0^{p-1}$. To prove this, we have from \eqref{rn, qn, q'n} that it is equivalent to proving the following:
\begin{equation}\label{q'_n}
q'_n\leq \frac{C\varepsilon_0^{p-1}}{\bar \gamma}+\frac{1}{2\bar{\gamma}},\ \forall n\geq n_0.
\end{equation}
Since $q'_n\underset{n\rightarrow +\infty}{\longrightarrow} C\varepsilon_0^{p-1}$, then for small enough $\varepsilon_0$, there exists $n_0\in \N$ such that, for all $n\geq n_0, q'_n\leq \frac{1}{2\bar{\gamma}}$, which implies 
\eqref{q'_n}. Hence,
\begin{equation*}
r_n=\frac{1}{2^{n}}, q_n=q_0,q'_n=C\varepsilon_0^{p-1}, \forall n\geq n_0.
\end{equation*}
With the same arguments as in Step 1, there exists $n_1\geq n_0$ such that
\begin{equation*}
r_n=\frac{1}{2^{n}}, q_n=q'_n=C\varepsilon_0^{p-1}, \forall n\geq n_1.
\end{equation*}

\textbf{Step 3:} From \eqref{q<1 v} and \eqref{q<1 w} for small enough $\varepsilon_0$ and $\mu$, we obtain
$$r_n=\frac{1}{2^{n}}, q_n=q'_n=0, \forall n\geq n_1+1.$$ 
In other words
\begin{equation}\label{conlusion n1}
\|v_{r_{n_1+2}}(\tau)\|_{L^\infty(\R^N)}+\|w_{r_{n_1+2}}(\tau)\|_{L^\infty(\R^N)}\leq C\varepsilon_0, \forall \tau \in [0,1),
\end{equation}
which concludes the proof.
\end{proof}

\section{Conclusion of the proof of Theorem \ref{Thm: ch1}'}\label{Conclusion Thm 2}
In this section, we gather the informations from the previous sections to conclude the proof of Theorem \ref{Thm: ch1}'. We proceed in two subsections, with the first dedicated to the general case, while the second one is concerned with further refinement when $N=1$.
\subsection{Theorem's proof in general dimension}
From \eqref{def: v}, we consider $\eta(\xi,\tau)=v_1(\xi,\tau)-v_2(\xi,\tau)$. We consider $\varepsilon_0>0$, and from Corollary \ref{cor bound v, nabla v}, $K_0>0$ and $r_0>0$. We know, from \eqref{bound v}, that for all $|\xi|\leq\beta (|\log(T_1-t(x))|)^{\frac{1}{4}}-1,\tau \in [0,1)$, $|\eta(\xi,\tau)|<C\varepsilon_0$, for some $\varepsilon_0>0$, and verifies the following equation (see \eqref{eq vi}):
\begin{equation}\label{eq eta}
\partial_\tau \eta=\Delta \eta+\alpha_1 \eta+\alpha_2,
\end{equation}
where 

\begin{equation*}
\begin{split}
\alpha_1=&\frac{|v_1|^{p-1}v_1-|v_2|^{p-1}v_2}{v_1-v_2},\\
\alpha_2=&(T_1-t(x))^{\frac{p}{p-1}}\left[h_1\left((T_1-t(x))^{-\frac{1}{p-1}}v_1,(T_1-t(x))^{-\frac{p+1}{2(p-1)}}\nabla v_1\right)\right.\\
&\left. -h_2\left((T_1-t(x))^{-\frac{1}{p-1}}v_2,(T_1-t(x))^{-\frac{p+1}{2(p-1)}}\nabla v_2\right)\right].
\end{split}
\end{equation*}
We consider the following cut-off function:
\begin{equation}\label{chi a}
\chi=\chi_0\left(\frac{\xi}{a|\log(T_1-t(x))|^{\theta}}\right),
\end{equation}
where $\chi_0\in C^\infty$, $\chi_0\equiv 1$ on $B(0,1)$ and $\chi_0 \equiv 0$ on $\R^N \backslash B(0,2)$ and $a<\beta$ with $a (|\log(T_1-t(x))|)^{\theta}< \beta (|\log(T_1-t(x))|)^{\frac{1}{4}}-1$ and $0<\theta\leq\frac{1}{4}$, which is going to be determined afterward. We also define 
$\bar{\eta}=\eta \chi$. From \eqref{eq eta}, we have that $\bar\eta$ verifies the following equation:
\begin{equation}\label{eq: eta bar}
\begin{split}
\partial_\tau \bar{\eta}&=\chi \Delta \eta+\alpha_1\bar{\eta}+\chi\alpha_2\\
&= \Delta \bar{\eta}-2\nabla(\eta\nabla \chi)+\eta \Delta \chi+\alpha_1\bar{\eta}+\chi\alpha_2\\
&= \Delta \bar{\eta}-\frac{2\nabla(\eta\nabla \chi_0)}{a|\log (T_1-t(x))|^{\theta}}+\frac{\eta \Delta \chi_0}{a^2|\log (T_1-t(x))|^{2\theta}}+\alpha_1\bar{\eta}+\chi\alpha_2,\\
\end{split}
\end{equation}
where we used, for the last equality, the fact that $\nabla \chi=\frac{\nabla \chi_0}{a|\log (T_1-t(x))|^{\theta}}$ and $\Delta \chi=\frac{\Delta \chi_0}{a^2|\log (T_1-t(x))|^{2\theta}}$. Therefore, $\bar \eta$ can be written as

\begin{equation*}
\begin{split}
\bar{\eta}(\tau)=&S(\tau) \bar{\eta}(0)-\frac{2}{a|\log (T_1-t(x))|^{\theta}}\int^\tau_0S(\tau-s)\nabla(\eta\nabla \chi_0)(s)\ ds\\
&+\frac{1}{a^2|\log (T_1-t(x))|^{2\theta}}\int^\tau_0S(\tau-s)\eta(s) \Delta \chi_0\ ds\\
&+\int^\tau_0 S(\tau-s)\alpha(s)\bar{\eta}(s)\ ds+\int^\tau_0 S(\tau-s)\chi\alpha_2(s)\ ds,
\end{split}
\end{equation*}
where $S$ is the heat semigroup kernel. Then, from \eqref{heat kernel prp}, we have
\begin{equation*}
\begin{split}
\|\bar{\eta}(\tau)\|_{L^\infty(\R^N)}\leq&\|\bar{\eta}(0)\|_{L^\infty(\R^N)}+\frac{C}{|\log (T_1-t(x))|^{\theta}}\int^\tau_0(\tau-s)^{-\frac{1}{2}}\|\eta(s)\nabla \chi_0\|_{L^\infty(\R^N)}\ ds\\
&+\frac{C}{|\log (T_1-t(x))|^{2\theta}}\int^\tau_0\|\eta (s) \Delta \chi_0\|_{L^\infty(\R^N)}\ ds+\int^\tau_0 \|\alpha_1(s)\bar{\eta}(s)\|_{L^\infty(\R^N)}\ ds\\
&+C(T_1-t(x))^{\frac{p-\gamma}{p-1}}\int^\tau_0 \|\chi|v_1|^{\gamma}(s)+\chi|v_2|^{\gamma}(s)\|_{L^\infty(\R^N)}\ ds\\
&+C(T_1-t(x))^{\frac{2p-\bar \gamma(p+1)}{p-1}}\int^\tau_0\|\chi|\nabla v_1|^{\bar\gamma}(s)+\chi|\nabla v_2|^{\bar\gamma}(s)\|_{L^\infty(\R^N)}ds\\
&+C(T_1-t(x))^{\frac{p}{p-1}}\chi\int^\tau_0ds.
\end{split}
\end{equation*}
Thus, 
\begin{equation*}
\begin{split}
\|\bar{\eta}(\tau)\|_{L^\infty(\R^N)}&\leq\|\bar{\eta}(0)\|_{L^\infty(\R^N)}+\frac{C}{|\log (T_1-t(x))|^{\theta}}\int^\tau_0(\tau-s)^{-\frac{1}{2}}\|\eta (s)\nabla \chi_0\|_{L^\infty(\R^N)}\ ds\\
&\ +\frac{C}{|\log (T_1-t(x))|^{2\theta}}\int^\tau_0\|\eta (s)\Delta \chi_0\|_{L^\infty(\R^N)}\ ds+\int^\tau_0 \|\alpha_1(s)\bar{\eta}(s)\|_{L^\infty(\R^N)}\ ds\\
&\ +C(T_1-t(x))^{\nu }.
\end{split}
\end{equation*}
We remind that $\nu=\frac{p-\gamma_0}{p-1}$ and $\gamma_0=\max\{\gamma,\overline{\gamma}(p+1)-p\}$. There exists $v_0\in (v_1,v_2)$ such that $\alpha_1=p|v_0|^{p-1}$. Hence, from \eqref{bound v}, there exists a constant $C>0$ such that $\|\alpha_1\|_{L^\infty(\R^N)}\leq C$. Also, there exists a constant $C>0$ such that $\|\nabla \chi_0\|_{L^\infty(\R^N)}\leq C$, $\|\Delta \chi_0\|_{L^\infty(\R^N)}\leq C$ and $\|\eta\|_{L^\infty(\R^N)}\leq C$. Then,

\begin{equation*}
\begin{split}
\|\bar{\eta}(\tau)\|_{L^\infty(\R^N)}\leq&\|\bar{\eta}(0)\|_{L^\infty(\R^N)}+\frac{C}{|\log (T_1-t(x))|^{\theta}}\int^\tau_0(\tau-s)^{-\frac{1}{2}}\ ds+\frac{C}{|\log (T_1-t(x))|^{2\theta}}\\
&+C\int^\tau_0 \|\bar{\eta}(s)\|_{L^\infty(\R^N)}\ ds+C(T_1-t(x))^{\nu }.
\end{split}
\end{equation*}
We obtain, for small $|x|$,
\begin{equation}\label{eta 0 left}
\begin{split}
\|\bar{\eta}(\tau)\|_{L^\infty(\R^N)}&\leq\|\bar{\eta}(0)\|_{L^\infty(\R^N)}+\frac{C}{|\log (T_1-t(x))|^{\theta}}+C\int^\tau_0 \|\bar{\eta}(s)\|_{L^\infty(\R^N)}\ ds.
\end{split}
\end{equation}

For $\tau=0$, we have $t=t(x)$. Therefore, from \eqref{def t(x)}, we have $|x|= \frac{K_0}{2}\sqrt{(T_1-t)|\log T_1-t|}$. Thus, from \eqref{inequality: error2}, we have

\begin{equation*}
\underset{\xi\leq a |\log(T_1-t(x))|^{\theta}}{\sup}|\eta(\xi,0)|=\underset{\xi\leq a |\log(T_1-t(x))|^{\theta}}{\sup}\left|v_1(\xi,0)-v_2(\xi,0)\right|\leq \frac{C}{|\log(T_1-t(x))|}.
\end{equation*}
Applying this to \eqref{eta 0 left}, we obtain
\begin{equation}\label{final controll equation eta}
\begin{split}
\|\bar{\eta}(\tau)\|_{L^\infty(\R^N)}&\leq \frac{C}{|\log(T_1-t(x))|}+\frac{C}{|\log (T_1-t(x))|^{\theta}}+C\int^\tau_0 \|\bar{\eta}(s)\|_{L^\infty(\R^N)}\ ds.
\end{split}
\end{equation}
Using a Gronwall's argument on \eqref{final controll equation eta} and knowing that $\tau <1$, we obtain

\begin{equation*}
\begin{split}
\|\bar{\eta}(\tau)\|_{L^\infty(\R^N)}&\leq \frac{C}{|\log(T_1-t(x))|}+\frac{C}{|\log (T_1-t(x))|^{\theta}}.
\end{split}
\end{equation*}
Therefore, we just need to take $\theta=\frac{1}{4}$ since $\theta\leq \frac{1}{4}$, and we obtain, for all $\xi \leq a (|\log(T_1-t(x))|)^{\frac{1}{4}}$ and $\tau \in [0,1)$,
\begin{equation*}
\begin{split}
|u_1(\xi,\tau)-\overset{\sim}{u}_2(\xi,\tau)|\leq \frac{C(T_1-t(x))^{-\frac{1}{p-1}}}{|\log(T_1-t(x))|^{\frac{1}{4}}}.
\end{split}
\end{equation*}
We can see, when $|x|\rightarrow 0$, that we have the following behavior:
\begin{equation}\label{equiv t(x)}
\log(T_1-t(x))\sim 2\log |x|\text{ and } T_1-t(x)\sim \frac{2|x|^2}{K_0^2|\log|x||}.
\end{equation}
Hence, there exists $\epsilon\in  (0; K_0 \sqrt{T_1 \log T_1})$ and small $\delta_0$ such that we have, for all\\ $\epsilon\geq |x|\geq K_0 \sqrt{(T_1-t )|\log (T_1-t)|}$ and $t\in [T_1-\delta_0,T_1)$,

$$|u_1(x,t)-\overset{\sim}{u}_2(x,t)| \leq \frac{C|x|^{-\frac{2}{p-1}}}{|\log|x||^{\frac{1}{4}-\frac{1}{p-1}}}.$$
Together with \eqref{inequality: error2}, we may conclude \eqref{thm estimation dim N}.

\subsection{Theorem's proof in dimension 1}
We have the following parabolic regularity result:
\begin{lemma}[Parabolic regularity]\label{bound z}
Assume that, for all $ |\xi|\leq 4B_1$ and $\tau\in  [0;\tau_*)$, $z$ satisfies
\begin{equation}\label{eq z}
\begin{split}
&\partial_\tau z\leq \Delta z+\lambda_1 z+\lambda_2,\\
&z(\xi,0)\leq z_0,\ z(\xi,\tau)\leq B_2,
\end{split}
\end{equation}
where $\tau_*\leq 1$. Then, for all $|\xi|\leq B_1$ and $\tau\in  [0;\tau_*),$
$$z(\xi, \tau)\leq e^{\lambda_1\tau}(z_0+\lambda_2+CB_2e^{-\frac{B_1^2}{4}}).$$
\end{lemma}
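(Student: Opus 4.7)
The plan is to transform the differential inequality into a genuine sub-solution of the heat equation, and then apply a parabolic maximum principle comparison with a barrier adapted to the cylinder $\{|\xi|\leq 4B_1\}\times[0,\tau_*)$. Concretely, I would set
$$\tilde z(\xi,\tau):=e^{-\lambda_1\tau}z(\xi,\tau)-\lambda_2\int_0^\tau e^{-\lambda_1 s}\,ds.$$
A direct computation using \eqref{eq z} yields $\partial_\tau \tilde z\leq \Delta\tilde z$ pointwise, while the initial and supremum bounds give $\tilde z(\cdot,0)\leq z_0$ and $\tilde z\leq B_2$ on $\{|\xi|\leq 4B_1\}\times[0,\tau_*)$.

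Next, I would compare $\tilde z$ with a super-solution of the heat equation of the form $\Phi(\xi,\tau)=z_0+B_2\, H(\xi,\tau)$, where $H\geq 0$ is a barrier satisfying $\partial_\tau H\geq\Delta H$, $H\geq 1$ on the lateral boundary $|\xi|=4B_1$, and $H(\xi,\tau)\leq C e^{-B_1^2/4}$ for $|\xi|\leq B_1$, $\tau\leq 1$. A convenient choice is a sum of Gaussians centred outside the ball, for instance
$$H(\xi,\tau)=c\sum_{\pm} e^{-(4B_1\mp\xi)^2/(4(\tau+a))},$$
with parameters $a,c>0$ calibrated so that $H\geq 1$ on $|\xi|=4B_1$. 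Since these Gaussians have distance at least $3B_1$ to $\{|\xi|\leq B_1\}$ and $\tau+a$ stays bounded thanks to $\tau_*\leq 1$, the stated decay at the centre of the ball follows (the gap between the $(3B_1)^2$ produced by the Gaussian exponent and the required $B_1^2$ is absorbed into the universal constant $C$).

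The parabolic maximum principle, applied on the cylinder $\{|\xi|\leq 4B_1\}\times[0,\tau_*)$, then yields $\tilde z\leq \Phi$ there. Restricting to $|\xi|\leq B_1$, using $\int_0^\tau e^{-\lambda_1 s}\,ds\leq \tau\leq 1$, and multiplying through by $e^{\lambda_1\tau}$ gives exactly
$$z(\xi,\tau)\leq e^{\lambda_1\tau}\bigl(z_0+\lambda_2+CB_2 e^{-B_1^2/4}\bigr).$$
The main obstacle is designing the barrier $H$ so that the lateral boundary comparison and the Gaussian decay at the centre hold simultaneously; the hypothesis $\tau_*\leq 1$ is crucial here, as it is what prevents the tail of $H$ from spreading enough to destroy the $e^{-B_1^2/4}$ estimate. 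A secondary technical point is careful bookkeeping of the $\lambda_2$ term through the substitution, so as to recover the claimed $\lambda_2$ inside the parenthesis rather than a larger multiple.
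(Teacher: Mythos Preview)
The paper does not actually prove this lemma; it simply refers the reader to Appendix~C of Merle--Zaag \cite{MZ1998}. Your approach---removing the lower-order terms by the substitution $\tilde z=e^{-\lambda_1\tau}z-\lambda_2\int_0^\tau e^{-\lambda_1 s}\,ds$ to obtain a genuine heat sub-solution, and then invoking the maximum principle against a Gaussian barrier centred at the lateral boundary---is a correct and standard route to the stated estimate, and is in the same spirit as the argument in \cite{MZ1998}. One small bookkeeping point: the boundary comparison $\Phi\geq\tilde z$ on $|\xi|=4B_1$ implicitly uses $z_0\geq 0$ (so that $z_0+B_2\geq B_2$), and the claim $\tilde z\leq B_2$ uses $\lambda_1,\lambda_2,B_2\geq 0$; these sign conditions hold in the application the paper makes of the lemma but are not spelled out in the hypotheses, so you should state them explicitly.
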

\begin{proof}
See Appendix C in \cite{MZ1998}.
\end{proof}

We define from \eqref{inequality: error3} the following:
\begin{equation*}
\begin{split}
&v_1(\xi,\tau)=(T_1-t(x))^{\frac{1}{p-1}}u_1(x+\xi\sqrt{T_1-t(x)},t(x)+\tau (T_1-t(x))),\\
&\overline{v}_2(\xi,\tau)=(T_1-t(x))^{\frac{1}{p-1}}\overline{u}_2(x+\xi\sqrt{T_1-t(x)},t(x)+\tau (T_1-t(x))),\\
&\overset{\sim}{\eta}(\xi,\tau)=v_1(\xi,\tau)-\overline{v}_2(\xi,\tau),
\end{split}
\end{equation*}
where $\overline{u}_2$ is given in \eqref{inequality: error3}. One may see that $\overline{v}_2$ and $\overset{\sim}{\eta}$ satisfy \eqref{bound v} and \eqref{eq eta} respectively. Together with similar computations as for \eqref{expression alpha_1} and \eqref{alpha 2 bound computations}, we obtain that $\overset{\sim}{\eta}$ satisfies \eqref{eq z} with 

\begin{equation*}
\begin{split}
&B_1=\frac{1}{4}\left(\beta (|\log(T_1-t(x))|)^{\frac{1}{4}}-1\right),\ \lambda_1=Cp\varepsilon_0^{p-1},\ B_2=C\varepsilon,\ \lambda_2=C(T_1-t(x))^{\nu},\\
&z_0=C(K_0)\max\left(\frac{(T_1-t(x))^{\frac{1}{2}}}{|\log(T_1-t(x))|^{\frac{3}{2}}}, \frac{(T_1-t(x))^{\nu}}{|\log(T-t(x))|^{2-\nu}}\exp\left(C\sqrt{-\log(T-t(x))}\right)\right).
\end{split}
\end{equation*}
Therefore, by Lemma \ref{bound z} and for small $|x|$, we have 

\begin{equation*}
\begin{split}
|\overset{\sim}{\eta}(\xi,\tau)|=|v_1(x,t)-\overline{v}&_2(x,t)|\leq \ C(K_0)\max\left(\frac{(T_1-t(x))^{\frac{1}{2}-\frac{1}{p-1}}}{|\log(T_1-t(x))|^{\frac{3}{2}}},\right.\\
&\ \left. \frac{(T_1-t(x))^{\nu-\frac{1}{p-1}}}{|\log(T-t(x))|^{2-\nu}}\exp\left(C\sqrt{-\log(T-t(x))}\right)\right)+C(T_1-t(x))^{\nu-\frac{1}{p-1}}.
\end{split}
\end{equation*}
Together with \eqref{equiv t(x)}, we obtain

\begin{equation}
\begin{split}
|u_1(x,t)-\overline{u}_2(x,t)|\leq & C(K_0)\max\left(\frac{|x|^{1-\frac{2}{p-1}}}{|\log |x||^{2-\frac{1}{p-1}}}, \frac{|x|^{2\nu-\frac{2}{p-1}}}{|\log |x||^{2-\frac{1}{p-1}}}\exp\left(C\sqrt{-\log |x|}\right)\right)\\
&+C\frac{|x|^{2\nu-\frac{2}{p-1}}}{|\log|x||^{\nu-\frac{1}{p-1}}}.
\end{split}
\end{equation}
Thus,
$$|u_1(x,t)-\overline{u}_2(x,t)|\leq C(K_0)\max\left(\frac{|x|^{\min(1,2\nu)-\frac{2}{p-1}}}{|\log |x||^{\min(2,\nu)-\frac{1}{p-1}}}, \frac{|x|^{2\nu-\frac{2}{p-1}}}{|\log |x||^{\min(2,\nu)-\frac{1}{p-1}}}\exp\left(C\sqrt{-2\log |x|}\right)\right).$$
Together with \eqref{inequality: error3}, we conclude \eqref{thm estimation dim 1}. Allowing us to close the proof of Theorem \ref{Thm: ch1}'.

\section{Final blow-up profile}
This section is devoted to the proof of Theorem \ref{Thm 2 g}, where we derive the final blow-up profile and prove the single point blow-up property. 

\begin{remark}
We will do the proof in the one-dimensional case to simplify the notations. One will see that it can be easily generalized to the multi-dimensional one. 
\end{remark}
\begin{proof}[Proof of Theorem \ref{Thm 2 g}]
Let us consider $u$ solution of \eqref{eq:1} such that $u\in S_{a,T,h}$. From equation \eqref{eq:1}, we may assume that $a=0$. Theorem \ref{eq:1} will follow if we prove the following two facts:
\begin{itemize}
\item The origin is the only blow-up point in $B(0,\epsilon)$ for some $\epsilon>0$, and $\ref{Thm 2 ii}$ holds in that same ball.
\item For any $|x|\geq \epsilon$, $x$ is not a blow-up point and $u(x,t)$ has a limit $u_*(x)$ as $t\rightarrow T$.
\end{itemize}
We proceed in two steps to prove these two facts.
\bigskip

\textbf{Step 1 (Single point blow-up in $B(0,\epsilon)$ and final profile):} One may see that the solution of the following equation: 
\begin{equation*}
\begin{split}
&v'_{K_0}=|v_{K_0}|^{p-1}v_{K_0},\\
&v_{K_0}(0)=f\left(\frac{K_0}{2}\right),
\end{split}
\end{equation*}
is given by the following: 
\begin{equation}\label{v_K_0}
v_{K_0}(\tau)=\left(f\left(\frac{K_0}{2}\right)^{1-p}-(p-1)\tau\right)^{-\frac{1}{p-1}}.
\end{equation}

We define $\eta(\xi,\tau)=v(\xi,\tau)-v_{K_0}(\tau)$, where $v$ is defined the same as $v_1$ in \eqref{def: v} with $T_1=T$ and $a=0$. We know from \eqref{bound v} and \eqref{v_K_0} that, for all $|\xi|\leq\beta (|\log(T_1-t(x))|)^{\frac{1}{4}}-1$ and $\tau \in [0,1)$, $|\eta(\xi,\tau)|<+\infty$ and $\eta$ verifies the following equation:
$$\partial_\tau \eta=\Delta \eta+\alpha_1 \eta+\alpha_2,$$
where 
\begin{equation*}
\begin{split}
&\alpha_1=\frac{|v|^{p-1}v-|v_{K_0}|^{p-1}v_{K_0}}{v-v_{K_0}},\\
&\alpha_2=(T_1-t(x))^{\frac{p}{p-1}}h\left((T_1-t(x))^{-\frac{1}{p-1}}v,(T-t(x))^{-\frac{p+1}{2(p-1)}}\nabla v\right).
\end{split}
\end{equation*}
Hence, we define again $\bar{\eta}=\eta \chi$, where $\chi$ is the same as in \eqref{chi a}. Thus, $\bar\eta$ verifies the following equation:
\begin{equation*}
\begin{split}
\partial_\tau \bar{\eta}=\chi \Delta \bar{\eta}-\frac{2\nabla(\eta\nabla \chi_0)}{a|\log (T-t(x))|^{\theta}}+\frac{\eta \Delta \chi_0}{a^2|\log (T-t(x))|^{2\theta}}+\alpha_1\bar{\eta}+\chi \alpha_2.\\
\end{split}
\end{equation*}
Therefore, it can be written as
\begin{equation*}
\begin{split}
\bar{\eta}(\tau)&=S(\tau) \bar{\eta}(0)-\frac{2}{a|\log (T_1-t(x))|^{\theta}}\int^\tau_0S(\tau-s)\nabla(\eta(s)\nabla \chi_0)ds\\
&+\frac{1}{a^2|\log (T_1-t(x))|^{2\theta}}\int^\tau_0S(\tau-s)\eta(s) \Delta \chi_0ds+\int^\tau_0 S(\tau-s)\alpha_1(s)\bar{\eta}(s)ds\\
&+\int^\tau_0 S(\tau-s)\chi\alpha_1(s)ds,
\end{split}
\end{equation*}
where $S$ is again the heat semigroup kernel. With the same computations as done in the previous section, we have

\begin{equation}\label{control bar eta}
\begin{split}
\|\bar{\eta}(\tau)\|_{L^\infty(\R^N)}&\leq\|\bar{\eta}(0)\|_{L^\infty(\R^N)}+\frac{C}{|\log (T_1-t(x))|^{\theta}}\int^\tau_0(\tau-s)^{-\frac{1}{2}}ds+\frac{C}{|\log (T_1-t(x))|^{2\theta}}\int^\tau_0ds\\
&+C\int^\tau_0 \|\bar{\eta}(s)\|_{L^\infty(\R^N)}ds+C(T_1-t(x))^{\nu}.
\end{split}
\end{equation}
Thus, for small $|x|$, we have
\begin{equation}\label{eta 0 left 2}
\begin{split}
\|\bar{\eta}\|_{L^\infty(\R^N)}(\tau)&\leq\|\bar{\eta}(0)\|_{L^\infty(\R^N)}+\frac{C}{|\log (T_1-t(x))|^{\theta}}+C\int^\tau_0 \|\bar{\eta}(s)\|_{L^\infty(\R^N)}ds.
\end{split}
\end{equation}
We know that $f$ is a Lipschitz function. Therefore,
\begin{equation}\label{f z-K_0}
\left|f(z)-f\left(\frac{K_0}{2}\right)\right|\leq C\left||z|-\frac{K_0}{2}\right|,
\end{equation}
where 
$$z=\frac{x+\xi\sqrt{T_1-t(x)}}{\sqrt{(T_1-t)|\log(T_1-t)|}}.$$
For $\tau =0$, we have $t=t(x)$, and for $x\geq 0$, we have, from \eqref{def t(x)},
\begin{equation*}
\begin{split}
|z|&\leq\frac{K_0}{2}\frac{\sqrt{(T_1-t(x))|\log(T_1-t(x))|}}{\sqrt{(T_1-t)|\log(T_1-t)|}}+\frac{|\xi|\sqrt{T_1-t(x)}}{\sqrt{(T_1-t)|\log(T_1-t)|}}\\
&=\frac{K_0}{2}+\frac{|\xi|}{\sqrt{|\log(T_1-t(x))|}}.\\
\end{split}
\end{equation*}
Since $|\xi|\leq \beta_1|\log(T_1-t(x))|^\theta$, then we get
\begin{equation*}
||z|-\frac{K_0}{2}|\leq \frac{1}{a|\log(T_1-t(x))|^{\frac{1}{2}-\theta}}.
\end{equation*}
Together with \eqref{profile v} and \eqref{f z-K_0}, we have 
\begin{equation*}
\underset{\xi\leq a |\log(T_1-t(x))|^{\theta}}{\sup}\left|v(\xi,0)-f(z)\right|\leq \frac{C}{|\log(T_1-t(x))|^{\frac{1}{2}-\theta}}.
\end{equation*}
Thus,
\begin{equation}\label{profile v(0)}
\underset{\xi\leq a |\log(T_1-t(x))|^{\theta}}{\sup}|\eta(\xi,0)|\leq \frac{C}{|\log(T_1-t(x))|^{\frac{1}{2}-\theta}}.
\end{equation}
Applying this to \eqref{eta 0 left 2}, we obtain
\begin{equation*}
\begin{split}
\|\bar{\eta}(\tau)\|_{L^\infty(\R^N)}&\leq \frac{C}{|\log(T_1-t(x))|^{\frac{1}{2}-\theta}}+\frac{C}{|\log (T_1-t(x))|^{\theta}}+C\int^\tau_0 \|\bar{\eta}(s)\|_{L^\infty(\R^N)}ds.
\end{split}
\end{equation*}
Using a Gronwall's argument again and by taking the optimal $\theta$ which is $\theta=\frac{1}{4}$, we get 
$$\|\bar{\eta}(\tau)\|_{L^\infty(\R^N)} \leq \frac{C}{|\log(T_1-t(x))|^{\frac{1}{4}}}.$$
Therefore, for $\xi\leq \beta_1|\log(T_1-t(x))|^{\frac{1}{4}}$ and $\tau \in [0,1)$,
$$|v(\xi,\tau)-v_{K_0}(\tau)|\leq \frac{C}{|\log(T_1-t(x))|^{\frac{1}{4}}}.$$
Then, for $\epsilon\in  (0; \frac{K_0}{2} \sqrt{T_1 \log T_1})$ and small $\delta_0$, for all $\epsilon\geq |x|\geq K_0 \sqrt{T_1-t \log (T_1-t)}$ and $t\in [T-\delta_0,T)$, we have
$$\left|u(x,t)-(T_1-t(x))^{-\frac{1}{p-1}}v_{K_0}\left(\frac{t-t(x)}{T_1-t(x)}\right)\right|\leq  C\frac{(T_1-t(x))^{-\frac{1}{p-1}}}{|\log(T_1-t(x))|^{\frac{1}{4}}}.$$
We define , for all $x\in \R\backslash \{a\}$ non-blow-up points, the following:
$$u_*(x)=\underset{t\rightarrow T}{\lim}u(x,t).$$
Then, for some  $\epsilon\in  (0; K_0 \sqrt{T \log T})$, we have for all $\epsilon\geq |x|\geq K_0 \sqrt{T-t \log (T-t)},$

$$|u_*(x)-(T-t(x))^{-\frac{1}{p-1}}v_{K_0}(1)|\leq  C\frac{(T_1-t(x))^{-\frac{1}{p-1}}}{|\log(T_1-t(x))|^{\frac{1}{4}}}.$$
Thus, from \eqref{equiv t(x)} with small enough $\delta_0$, we have for all $\epsilon\geq |x|\geq K_0 \sqrt{(T-t )\log (T-t)}$ and $t\in [T-\delta_0,T),$
\begin{equation}\label{final profile proof}
\left|u_*(x)-\left(\frac{2}{K_0^2}\right)^{-\frac{1}{p-1}}\frac{|x|^{-\frac{2}{p-1}}}{|\log|x||^{-\frac{1}{p-1}}}\left(f\left(\frac{K_0}{2}\right)^{1-p}-(p-1)\right)^{-\frac{1}{p-1}}\right|\leq  C(K_0) \frac {|\log|x-a||^{\frac{1}{p-1} - \frac{1}{4}}} {|x-a|^{\frac{2}{p-1}}},
\end{equation}
which gives us \eqref{Thm 2}. 

\textbf{Step 2 (No blow-up outside of $\mathbf{B(0,\epsilon)}$):} We may assume $T$ is as small as needed. Otherwise, we just have to consider in \eqref{eq:1}, $u_0(.)=u(.,T-\varepsilon)$, where $\varepsilon>0$ is small enough.

\textbf{Case of $\mathbf{\epsilon < |x|<\frac{K_0}{2}\sqrt{T|\log T|}}$:} We have from \eqref{z > K_0 beta}
$$|z|\geq \frac{K_0}{4}.$$
Together, with \eqref{profile v}, we obtain \begin{equation*}
(1-\tau)^{\frac{1}{p-1}}|v(\xi,\tau)|\leq \varepsilon_0, \forall |\xi|\leq\beta (|\log(T-t(x))|)^{\frac{1}{4}}, \forall \tau \in [0,1).
\end{equation*}
Easy similar computations yield to 
\begin{equation*}
|\nabla v(\xi,\tau)|\leq \varepsilon_0 (1-\tau)^{-\frac{1}{p-1}-\frac{1}{2}}, \forall \xi\in \R^N, \forall \tau \in [0,1).
\end{equation*}
With Proposition \ref{prp bound v}, we have for $|\xi|\leq\beta (|\log(T-t(x))|)^{\frac{1}{4}},\tau\in[0,1)$, 
$$|v(\xi,\tau)|\leq C.$$
Since $t(x)<T-\varepsilon$, for some $\varepsilon>0$, then 
\begin{equation}\label{control eta x>epsilon}
u_*(x)\leq C \text{ for all }\epsilon\leq |x|<\frac{K_0}{2}\sqrt{T|\log T|}.
\end{equation}

\textbf{Case of $\mathbf{|x| \geq \frac{K_0}{2}\sqrt{T|\log T|}}$:} We define for all $\tau(t)=\frac{t}{T}\in [0,1)$ and $|\xi|\leq \beta |\log T|^{\frac{1}{4}}$,
$$v(\xi,\tau)=T^{\frac{1}{p-1}}u(x+\xi\sqrt{T_1},\tau T).$$
Then, for $z=\frac{x+\xi\sqrt{T}}{\sqrt{(T-t)|\log(T-t)|}}$, we have
\begin{equation}\label{z K0}
\begin{split}
|z|&=\frac{\sqrt{T|\log T|}}{\sqrt{(T-t)|\log(T-t)|}}\left|\frac{x}{\sqrt{T|\log T|}}+\frac{\xi}{\sqrt{|\log T|}}\right|\\
&\geq \frac{\sqrt{T|\log T|}}{\sqrt{(T-t)|\log(T-t)|}}\left(\frac{K_0}{2}-\frac{\beta}{|\log T|^{\frac{1}{4}}}\right)\\
&\geq \frac{K_0}{4}.
\end{split}
\end{equation}
From \eqref{behaviour1}, we have
\begin{equation*}
\begin{split}
\underset{\xi\in \R^N}{\sup}\left|(1-\tau)^{\frac{1}{p-1}}v(\xi,\tau)-f\left(z\right)\right|&\leq \frac{C}{\sqrt{|\log(T-t)|}}\\
&\leq \frac{C}{\sqrt{|\log T|}}.
\end{split}
\end{equation*}
Together with \eqref{z K0}, we have for all $\tau(x,t)=\frac{t}{T}\in [0,1)$ and $|\xi|\leq \beta |\log T|^{\frac{1}{4}}$,
\begin{equation}\label{rough v 2}
(1-\tau)^{\frac{1}{p-1}}|v(\xi,\tau)|\leq \varepsilon_0.
\end{equation}
Again from \eqref{behaviour1}, we have
\begin{equation*}
\underset{x'\in \R^N}{\sup}\left|(1-\tau)^{\frac{1}{p-1}+\frac{1}{2}}\nabla v(\xi,\tau)-\frac{1}{\sqrt{|\log T|}}\nabla f\left(z\right)\right|\leq \frac{C}{\sqrt{|\log T|}},
\end{equation*}
which gives us 
\begin{equation*}
|\nabla v(\xi,\tau)|\leq \varepsilon_0 (1-\tau)^{-\frac{1}{p-1}-\frac{1}{2}}.
\end{equation*}
Together with \eqref{rough v 2} and Proposition \ref{prp bound v}, we obtain for $|\xi|\leq\beta (|\log(T-t(x))|)^{\frac{1}{4}},\tau\in[0,1)$, 
$$|v(\xi,\tau)|\leq C.$$
Again since $t(x)<T-\varepsilon$, then 

$$u_*(x)\leq C \text{ for all }|x| \geq \frac{K_0}{2}\sqrt{T|\log T|}.$$
Together with \eqref{final profile proof} and \eqref{control eta x>epsilon}, we conclude that $0$ is the only blow-up point and that $u_*(x)$ exists for all $x\in \R\backslash \{a\}$, which concludes the proof of Theorem \ref{Thm 2 g}.
\end{proof}

\appendix

\section{Proof of Lemma \ref{lemma:FL1999}}\label{Lemma FK1999}
The proof of Lemma \ref{lemma:FL1999} is as in \cite{FK1992} and \cite{FL1993}. Of course, those papers consider the case where $h\equiv 0$. We will consider the general case here. In particular, our contribution is visible in Lemmas \ref{lemma eq v+,v-,v null} and \ref{lemma eq neutral modes}. We will recall the main ideas of the proof (see \cite[Section 4 and 5]{FK1992} and \cite[Proposition 4.1]{FL1993}), only pointing out the parts where changes occur in our case. We assume $N=1$ and define $v=w_2-\kappa$. Since we have $|w_2(y,s)|\leq M$, for all $s\geq s_0$ and $y\in\R$, from Lemma \ref{bound w}, then we have $|v(y,s)|\leq B:=M+\kappa$. From \eqref{eq1:cv}, the equation satisfied by $v$ is 
\begin{equation}\label{eq v Append}
v_s=\mathcal{L}v+G(v)+F(v,\nabla v),
\end{equation}
where $\mathcal{L}$ is given in \eqref{def op L}, $G(v)=|v+\kappa|^{p-1}(v+\kappa)-\kappa^p-\frac{p}{p-1}v$, $F(v,\nabla v)=e^{-\frac{p}{p-1}s}h(e^{\frac{1}{p-1}s}(v+\kappa),e^{\frac{p+1}{2(p-1)}s}\nabla v)$. We define 
\begin{align}\label{x y z}
&v_+=P_+(v)=P_0(v)+P_1(v),&z=\|v_+\|_{L^2_\rho},\\
&v_{null}=P_2(v),&x=\|v_{null}\|_{L^2_\rho},\\
&v_-=P_-(v)=\underset{n\geq 3}{\sum}P_nv,&y=\|v_-\|_{L^2_\rho}.
\end{align}
Then, we have
\begin{equation}\label{decomposition v}
    v=v_++v_{null}+v-.
\end{equation}
We have the following first lemma:
\begin{lemma}\label{lemma eq v+,v-,v null}
Either $v \rightarrow 0$ exponentially fast as $s \rightarrow +\infty$, or for some $b>0$ and for any $\widehat \epsilon > 0$, there is a time $s_*$ such that for all $s\geq s_*$,
\begin{equation}\label{generic behavior}
\|v_+\|_{L^2_\rho}+\|v_-\|_{L^2_\rho}+e^{-\frac{\nu}{3}s}\leq b\widehat{\epsilon}\|v_{null}\|_{L^2_\rho}.
\end{equation}
\end{lemma}
\begin{proof}
Using a Taylor expansion on \eqref{eq v Append}, we have
\begin{equation}\label{eq:v}
v_s=\mathcal{L}v+c(\phi,v)v^2+F(v,\nabla v),
\end{equation}
where $c(\phi,v)=\frac{1}{2}p(p-1)(\kappa+\phi)^{p-2}$ and $0\leq \phi \leq v$. 
Projecting it on the unstable subspace, we obtain 
$$\frac{1}{2}\frac{d}{d s}\int v^2_+\rho=\int \mathcal{L}v_+ .v_+\rho+\int P_+[c(\phi,p)v^2]v_+\rho+\int P_+[F(v,\nabla v)]v_+\rho.$$
Filippas and Kohn in \cite[pages 842 and 843]{FK1992} showed that 
$$\int \mathcal{L}v_+ .v_+\rho+\int \pi_+[c(\phi,p)v^2]v_+\rho\geq \frac{1}{2}z^2-CNz,$$
where $N=(\int v^4\rho)^{\frac{1}{2}}$. From Lemma \ref{bound w} and Proposition \ref{bound nabla w}, we have, for all $s\geq s_1$, the following:
$$|F(v,\nabla v)|\leq Ce^{-\nu s}.$$
Then, by Cauchy-Schwarz inequality, we obtain 
$$\left|\int P_+[F(v,\nabla v)]v_+\rho\right|\leq Ce^{-\nu s}\left(\int v_+^2\rho\right)^{\frac{1}{2}}=Ce^{-\nu s} z.$$
Thus,
$$\overset{.}{z}\geq \frac{1}{2}z-CN-Ce^{-\nu s}.$$
Using the same arguments with the neutral and the stable component we obtain 
\begin{equation}\label{ineq xyz}
\begin{split}
&\overset{.}{z}\geq \frac{1}{2}z-CN-Ce^{-\nu s},\\
&|\overset{.}{x}|\leq CN+Ce^{-\nu s},\\
&\overset{.}{y}\leq -\frac{1}{2}y+CN+Ce^{-\nu s}.\\
\end{split}
\end{equation}
They also showed, in \cite[(4.10)]{FK1992}, that 
\begin{equation}\label{Control N}
N\leq \epsilon (x+y+z)+\delta^{\frac{k}{2}} J,
\end{equation}
where $k$ is a positive integer and $J=(\int v^4 |y|^k\rho )^{\frac{1}{2}}$.
To estimate $J$, we multiply \eqref{eq:v} by $v^3|y|^k\rho$ and integrate to get
\begin{equation}
\frac{1}{4}\frac{d}{ds}\int v^4|y|^k\rho=\int \nabla(\rho\nabla v)v^3|y|^k+\int v^4|y|^k\rho +\int c(\phi,p)v^5|y|^k \rho+\int F(v,\nabla v)v^3|y|^k\rho.
\end{equation}
Also, from \cite[(4.15)]{FK1992}, we have that
$$\int \nabla(\rho\nabla v)v^3|y|^k+\int v^4|y|^k\rho +\int c(\phi,p)v^5|y|^k \rho\leq -\theta J^2+\epsilon '(x+y+z)J,$$
where 
$$\theta =\frac{k}{8}-1-CB-\frac{k\delta^2}{4}(k+N-2),$$
$$\epsilon '=\frac{1}{4}\epsilon \delta^{2-\frac{k}{2}}k(k+N-2),$$
 for some $\epsilon>0$ as small as needed. For the last term, we have 
$$\int F(v,\nabla v)v^3 |y|^k\rho\leq Ce^{-\nu s}\left(\int v^2 |y|^k\rho\right)^{1/2}\left(\int v^4 |y|^k\rho\right)^{1/2}\leq  C(k,N)Be^{-\nu s}J.$$
Thus,
\begin{equation}
\overset{.}{J}\leq -\theta J+\epsilon ' (x+y+z)+C(k,N)Be^{-\nu s}.
\end{equation}
We make $\theta \geq \frac{1}{2}$ by taking $k$ large enough. Then, we obtain

\begin{equation}\label{ineq J}
\overset{.}{J}\leq -\frac{1}{2}J+\epsilon ' (x+y+z)+Ce^{-\nu s}.
\end{equation}
From \eqref{ineq xyz},\eqref{Control N} and \eqref{ineq J}, we get the following system:
\begin{equation}
\begin{split}
&\overset{.}{z}\geq \frac{1}{2}z-\epsilon C (x+y+z)-\delta^{\frac{k}{2}} CJ-Ce^{-\nu s},\\
&|\overset{.}{x}|\leq \epsilon C (x+y+z)+\delta^{\frac{k}{2}} CJ+Ce^{-\nu s},\\
&\overset{.}{y}\leq -\frac{1}{2}y+\epsilon C_0 (x+y+z)+\delta^{\frac{k}{2}} C_1J+C_2(k)e^{-\nu s},\\
&\overset{.}{J}\leq -\frac{1}{2} J+\epsilon ' (x+y+z)+C_2(k)e^{-\nu s}.
\end{split}
\end{equation}
We add the last two inequalities and consider 
$$\overset{\sim}{y}=y+J+e^{-\frac{\nu}{3} s}\text{ and }\widehat{\epsilon}=\max(\epsilon C_0+\epsilon ', \delta^{\frac{k}{2}}C_1), K=\min\left(\frac{1}{2},\frac{\nu}{3}\right).$$
We can always make $\widehat{\epsilon}$ as small as we want by taking $\epsilon$ and $\delta$ small enough. Then, for $s$ large enough, we have that
$$\overset{.}{\overset{\sim}{y}}\leq -K\overset{\sim}{y}+\widehat\epsilon{\overset{\sim}{y}}+\widehat{\epsilon}(x+z)+2C_2(k)e^{-\nu s}.$$
Therefore, for $s$ large enough, we have that  $2C_2(k)e^{-\nu s}\leq \widehat \epsilon\tilde{y}$, we obtain the inequalities system 
\begin{equation}\label{system x,y,z}
\begin{split}
&\overset{.}{z}\geq \left(K-2\widehat{\epsilon}\right)z-2\widehat{\epsilon}(x+\overset{\sim}{y}),\\
&|\overset{.}{x}|\leq 2\widehat{\epsilon}(x+y+z),\\
&\overset{.}{\overset{\sim}{y}}\leq -\left(K-\widehat\epsilon\right)\overset{\sim}{y}+2\widehat{\epsilon}(x+z).
\end{split}
\end{equation}
Now, we recall the following lemma:
\begin{lemma}[Filippas and Kohn, \cite{FK1992}]\label{lemma x,y,z}
Let $x(s ) ,y (s ) , z ( s )$ be absolutely continuous, real valued functions which are non-negative and satisfy:
\begin{equation}
\begin{split}
&\overset{.}{z}\geq c_0z-\widehat\epsilon(x+y),\\
&|\overset{.}{x}|\leq \widehat\epsilon(x+y+z),\\
&\overset{.}{y}\leq -c_0 y+\widehat\epsilon(x+z),\\
&x,y,z\rightarrow 0 \text{ as } s\rightarrow \infty,
\end{split}
\end{equation}
where $c_0$ is any positive constant and $\widehat\epsilon$ is a sufficiently small positive constant. Then, either $x, y, z\rightarrow 0$ exponentially fast, or there exists a time $s_*$ such that $z + y \leq b\widehat\epsilon x$, for all $s \geq s_*$, where $b$ is a positive constant depending only on $c_0$.
\end{lemma}
\begin{proof}
See Lemma 3.1 in \cite{FK1992}.
\end{proof}
Applying Lemma \ref{lemma x,y,z} to \eqref{system x,y,z}, we see that either $v\rightarrow 0$ exponentially fast, ot there exists $s_*\geq s_0$, such that 
\begin{equation}\label{controll by nutral}
\|v_+\|_{L^2_\rho}+\|v_-\|_{L^2_\rho}+e^{-\frac{\nu}{3} s}\leq b\widehat{\epsilon}\|v_{null}\|_{L^2_\rho}, \forall s\geq s_*,
\end{equation}
for some $b>0$, which concludes the proof of Lemma \ref{lemma eq v+,v-,v null}.
\end{proof}

We now claim the following lemma which is similar to \cite[Proposition 4.1]{FL1993} proved in the case $h\equiv 0$.
\begin{lemma}\label{lemma eq neutral modes}
Assume $v$ does not approach 0 exponentially fast. Then, the neutral mode  satisfies
\begin{equation}\label{eq alpha_2}
\overset{.}{\alpha_2}=\frac{p}{2k}\pi_2(v_{null}^2)+O\left(\alpha_2^{3}\right),
\end{equation}
where $\alpha_2=\pi_2(v)$ and $\pi_2$ is defined in \eqref{def q_beta and k_beta}.
\end{lemma}
\begin{proof}
Applying a Taylor expansion to the order 3 to \eqref{eq v Append}, we have
\begin{equation}
v_s=\mathcal{L}v+\frac{p}{2\kappa}v^2+g(v)+F(v,\nabla v),
\end{equation}
with $|g(v)|\leq C|v|^3$. We project \eqref{eq:v} onto $h_2$, obtaining
$$\overset{.}{\alpha_2}=\frac{p}{2k}\pi_2(v_{null}^2)+\frac{p}{2k}E+\pi_2(F(v,\nabla v)),$$
where
$$E=\frac{p}{2k}\pi_2(v^2-v_{null}^2)+\pi_2(g(v)).$$
From similar computations as in \cite[pages 334 and 335]{FL1993}, we have 
$$|E|\leq Cx^3\leq C |\alpha_2|^3,$$
where $x$ is defined in \eqref{x y z}. Note that for this estimate, we crucially use the fact that $\|v\|_{L^2_\rho}(s)\sim \frac{C}{s}$ (see \eqref{equivalence v null}) in order to control the exponentially small terms in the equation. For the last term, we just use the fact that $|w(s)|+|\nabla w(s)|\leq C$ for all $s\geq s_1$ (From Lemma \ref{bound w} and Proposition \ref{bound nabla w}). Therefore,
$$|F(v,\nabla v)|\leq Ce^{-\nu s}.$$
Thus, using Cauchy-Schwartz inequality, we obtain
$$P_\beta(F(v,\nabla v))=\int F(v,\nabla v)h_2\rho\leq \|F(v,\nabla v)\|_{L^2_\rho}\|h_\beta\|_{L^2_\rho} \leq Ce^{-\nu s}\leq C x^3\leq C |\alpha_2|^3,$$
where we used \eqref{generic behavior} for the third inequality. Therefore, we may conclude the proof of Lemma \ref{lemma eq neutral modes}.
\end{proof}
Now, we have all what is needed for the proof of Lemma \ref{lemma:FL1999}.
\begin{proof}[Proof of Lemma \ref{lemma:FL1999}]
From Lemmas \ref{lemma eq v+,v-,v null} and \ref{lemma eq neutral modes}, one may see that we have\\
\textbf{Case 1: }either $v \rightarrow 0$ exponentially fast as $s \rightarrow +\infty$,\\
\textbf{Case 2:} or 
$$\overset{.}{\alpha_2}=C\alpha_2^2+O(\alpha_2^3).$$
We exclude the first scenario with the following: We use a Taylor expansion on \eqref{equality w and q} for $w_2$ and project onto $h_2$, obtaining
\begin{equation}
\alpha_2(s)+\frac{C(\kappa,p)}{s}+O\left(\frac{1}{s^2}\right)=\pi_2(q_b)(s)+\pi_2(q_e).
\end{equation}
Since $q\in V_A$, for some $A>0$, then, from Propostion \ref{def V_A}, we have
$$\pi_2(q_b)(s)=O\left(\frac{\log (s)}{s^2}\right),$$
and
$$ |\pi_2(q_e)|\leq\int_{y\geq K_0\sqrt{s}}|qk_2\rho|\leq C\frac{e^{-\frac{K_0 s}{4}}}{\sqrt{s}}\int_{y\geq K_0\sqrt{s}}|k_2\sqrt{\rho}|\leq C\frac{e^{-\frac{K_0s}{4}}}{\sqrt{s}}.$$
We obtain then
\begin{equation}\label{equivalence v null}
\alpha_2(s)\sim -\frac{C(\kappa,p)}{s}.
\end{equation}
Since $\alpha_2=\pi_2(v)$. It follows that
$$\|v(s)\|_{L^2_\rho}>\frac{C}{s},$$
for some $C>0$. Therefore, we obtain a contradiction. We have only Case 2 is valid. Then, we obtain the equation in \eqref{eq w_2}. Using \eqref{equivalence v null}, we have the equivalence of $w_{2,2}$ in \eqref{eq w_2}, which concludes the proof of Lemma \ref{lemma:FL1999}.
\end{proof}

\subsection*{Statements:}
\begin{itemize}
    \item  \textbf{Conflict of interest:} The author states that there is no
conflict of interest.\vspace{-0.2cm}
    \item \textbf{Data availability:} We do not analyse or generate any datasets, because our work proceeds within a theoretical and mathematical approach. One can obtain the relevant materials from the references below.
\end{itemize}
\printbibliography

@article{AW1992,
  title={Blow up in $\R^N$ for a parabolic equation with a damping nonlinear gradient term},
  author={Alfonsi, Liliane and Weissler, Fred B},
  booktitle={Nonlinear diffusion equations and their equilibrium states, 3},
  pages={1--20},
  year={1992},
  publisher={Springer}
}

@article{AZ2021,
  title={Construction of a blow-up solution for a perturbed nonlinear heat equation with a gradient and a non-local term},
  author={Abdelhedi, Bouthaina and Zaag, Hatem},
  journal={Journal of Differential Equations},
  volume={272},
  pages={1--45},
  year={2021},
  publisher={Elsevier}
}

@article{BK1994,
  title={Universality in blow-up for nonlinear heat equations},
  author={Bricmont, Jean and Kupiainen, Antti},
  journal={Nonlinearity},
  volume={7},
  number={2},
  pages={539},
  year={1994},
  publisher={IOP Publishing}
}

@article{EZ2011,
  title={Construction and stability of a blow up solution for a nonlinear heat equation with a gradient term},
  author={Ebde, Mohammed Abderrahman and Zaag, Hatem},
  journal={SeMA Journal},
  volume={55},
  number={1},
  pages={5--21},
  year={2011},
  publisher={Springer}
}

@article{FK1992,
  title={Refined asymptotics for the blowup of ut—$\delta$u= up},
  author={Filippas, Stathis and Kohn, Robert V},
  journal={Communications on pure and applied mathematics},
  volume={45},
  number={7},
  pages={821--869},
  year={1992},
  publisher={Wiley Online Library}
}

@article{FL1993,
  title={On the blow-up of a multidimensional semilinear heat equation},
  author={Filippas, Stathis and Liu, Wenxiong},
  journal={Ann. Inst. H. Poincar{\'e} Anal. Non Lin{\'e}aire},
  volume={10},
  pages={313--344},
  year={1993}
}

@article{GK1989,
  title={Nondegeneracy of blowup for semilinear heat equations},
  author={Giga, Yoshikazu and Kohn, Robert V.},
  journal={Communications on Pure and Applied Mathematics},
  volume={42},
  number={6},
  pages={845--884},
  year={1989},
  publisher={Wiley Online Library}
}

@article{KZ2000,
  title={Boundedness up to blow-up of the difference between two solutions to a semilinear heat equation},
  author={Kammerer, Clotilde Fermanian and Zaag, Hatem},
  journal={Nonlinearity},
  volume={13},
  number={4},
  pages={1189-1216},
  year={2000},
  publisher={IOP Publishing}
}

@article{MZ1997,
  title={Stability of the blow-up profile for equations of the type $u_t= \Delta u+| u|^{ p-1} u$},
  author={Merle, Frank and Zaag, Hatem},
  journal={Duke Mathematical Journal},
  volume={86},
  pages={143--195},
  year={1997}
}

@article{MZ1998,
  title={Refined uniform estimates at blow-up and applications for nonlinear heat equations},
  author={Merle, Frank and Zaag, Hatem},
  journal={Geometric and Functional Analysis},
  volume={8},
  number={6},
  pages={1043--1085},
  year={1998},
  publisher={Citeseer}
}

@article{STW1996,
  title={Exact self-similar blow-up of solutions of a semilinear parabolic equation with a nonlinear gradient term},
  author={Souplet, Philippe and Tayachi, Slim and Weissler, Fred B},
  journal={Indiana University Mathematics Journal},
  pages={655--682},
  year={1996},
  publisher={JSTOR}
}

@book{SQ2019,
  title={Superlinear parabolic problems},
  author={Quittner, Pavol and Souplet, Philippe},
  year={2019},
  publisher={Springer}
}

@article{TZ2015,
  title={Existence of a stable blow-up profile for the nonlinear heat equation with a critical power nonlinear gradient term},
  author={Tayachi, Slim and Zaag, Hatem},
  journal={Transactions of the American Mathematical Society},
  volume={371},
  number={8},
  pages={5899--5972},
  year={2019}
}

@article{V1992,
  title={Higher dimensional blow up for semilinear parabolic equations},
  author={Vel{\'a}zquez, Juan JL},
  journal={Communications in partial differential equations},
  volume={17},
  number={9-10},
  pages={1567--1596},
  year={1992},
  publisher={Taylor \& Francis}
}

@article{VGH1991,
  title={The space structure near a blow-up point for semilinear heat-equations: a formal approach},
  author={Vel{\'a}zquez, Juan JL and Galaktionov, Viktor Aleksandrovich and Herrero, Miguel A},
  year={1991},
  publisher={Pergamon-Elsevier Science}
}

@inproceedings{Z2002,
  title={On the regularity of the blow-up set for semilinear heat equations},
  author={Zaag, Hatem},
  booktitle={Annales de l'IHP Analyse non lin{\'e}aire},
  volume={19},
  number={5},
  pages={505--542},
  year={2002}
}

\end{document}